\DeclareMathOperator{\cimg}{\overline{img}}
\DeclareMathOperator{\spec}{spec}
\DeclareMathOperator*{\LIM}{FP}
\DeclareMathOperator{\sdet}{sdet}
\DeclareMathOperator{\sgn}{sgn}
\DeclareMathOperator{\img}{img}
\DeclareMathOperator{\tr}{tr}
\DeclareMathOperator{\Ad}{Ad}
\newcommand\grr{\textrm{gr}}
\newcommand\Riem{\mathrm{Riem}}
\newcommand\aoe{\mathfrak a}
\newcommand\hoe{\mathfrak h}
\newcommand\goe{\mathfrak g}
\newcommand\poe{\mathfrak p}
\newcommand\ii{\mathbf i}
\DeclareMathOperator{\Lie}{Lie}
\DeclareMathOperator{\id}{id}
\newcommand{\gl}{\mathfrak{gl}}
\DeclareMathOperator{\GL}{GL}
\DeclareMathOperator{\Aut}{Aut}
\DeclareMathOperator{\eend}{end}
\newcommand\Z{\mathbb Z}
\newcommand\N{\mathbb N}
\newcommand\R{\mathbb R}
\newcommand\C{\mathbb C}
\newcommand\Q{\mathbb Q}
\newcommand\trs{{\tr}_*}
\newcommand\dets{{\det}_*}
\newcommand\sdets{{\sdet}_*}
\theoremstyle{plain}
  \newtheorem{theorem}{Theorem}
  \newtheorem{corollary}{Corollary}
  \newtheorem{lemma}{Lemma}
  \newtheorem{proposition}{Proposition}
\theoremstyle{remark}
  \newtheorem{remark}{Remark}
\begin{document}

\title[Regularized determinants of the Rumin complex]{Regularized determinants of the Rumin complex\\ in irreducible unitary representations\\ of the (2,3,5) nilpotent Lie group}

\author{Stefan Haller}

\address{Stefan Haller,
	Department of Mathematics,
	University of Vienna,
	Oskar-Morgenstern-Platz 1,
	1090 Vienna,
	Austria.
	ORCID: \href{https://orcid.org/0000-0002-7064-2215}{0000-0002-7064-2215}}

\email{stefan.haller@univie.ac.at}

\begin{abstract}
	We study the Rumin differentials of the 5-dimensional graded nilpotent Lie group that appears as osculating group of generic rank two distributions in dimension five.
	In irreducible unitary representations of this group, the Rumin differentials provide intriguing generalizations of the quantum harmonic oscillator.
	For the Schr\"odinger representations, we compute the spectrum and the zeta regularized determinant of each Rumin differential.
	In the generic representations, we evaluate their alternating product, i.e., the analytic torsion of the Rumin complex.
\end{abstract}

\keywords{Analytic torsion; Rumin complex; Rockland complex; Generic rank two distribution; (2,3,5) distribution; Sub-Riemannian geometry; Unitary dual;}

\subjclass[2010]{58J52 (primary) and 11M41, 34L40, 53C17, 58A30, 58J10, 81Q20, 81Q80}


\maketitle

\section{Introduction}\label{S:intro}

	Consider the generic real 5-dimensional graded nilpotent Lie algebra 
	\begin{equation}
		\goe=\goe_{-3}\oplus\goe_{-2}\oplus\goe_{-1}
	\end{equation}
	with $\dim\goe_{-1}=2=\dim\goe_{-3}$ and $\dim\goe_{-2}=1$.
	A more explicit description of the Lie algebra structure reads: if $X_1,X_2$ is a basis of $\goe_{-1}$, then $X_3=[X_1,X_2]$ is a basis of $\goe_{-2}$, and $X_4=[X_1,X_3]$, $X_5=[X_2,X_3]$ is a basis of $\goe_{-3}$.

	This Lie algebra appears as the osculating algebra of generic rank two distributions in dimension five \cite{C10}.
	These geometric structures are also known as $(2,3,5)$ distributions, and they have attracted quite some attention recently, see \cite{AN18,BHN18,BM09,BH93,CN09,CSa09,DH19,DH20,H22,HS09,HS11,LNS17,N05,S06,S08} to name a few.
	They can equivalently be described as parabolic geometries \cite{CS09} of type $(G_2,P)$ where $G_2$ denotes the split real form of the exceptional Lie group, and $P$ denotes the parabolic subgroup corresponding to the longer root, cf.~\cite{C10,S08} and \cite[Section~4.3.2]{CS09}. 
	This parabolic subgroup induces a grading on the Lie algebra of $G_2$ which has the form 
	\begin{equation}\label{E:LieG2}
		\Lie(G_2)=\goe_{-3}\oplus\goe_{-2}\oplus\goe_{-1}\oplus\goe_0\oplus\goe_1\oplus\goe_2\oplus\goe_3
	\end{equation}
	where the negative part coincides with the graded nilpotent Lie algebra $\goe$, and $\goe_0\cong\gl_2(\R)$.
	The subspace $\goe_{-1}$ determines a left invariant rank two distribution on the simply connected Lie group $G$ with Lie algebra $\goe$ which is locally diffeomorphic to the flat model $G_2/P$, cf.~\cite{S06}.

	The naturally associated Rumin complex \cite{R90,R94,R99,R05} is a sequence of left invariant differential operators
	\[
		D_q\colon C^\infty(G)\otimes H^q(\goe)\to C^\infty(G)\otimes H^{q+1}(\goe),\qquad q=0,1,2,3,4
	\]
	satisfying $D_{q+1}D_q=0$.
	Here $H^q(\goe)=H^q(\goe;\C)$ denotes the Lie algebra cohomology of $\goe$ with coefficients in the trivial complex representation.
	Explicit formulas for $D_q$ can be found in Section~\ref{SS:D.formula}.
	Rumin has shown that the complex $D_q$ becomes exact in each nontrivial irreducible unitary representation of $G$, see \cite[Theorem~3]{R99} or \cite[Theorem~5.2]{R01}.

	According to Dixmier \cite[Proposition~8]{D58} the nilpotent Lie group $G$ has three types of irreducible unitary representations: 1-dimensional scalar representations which factor through the abelianization $G/[G,G]$; Schr\"odinger representations $\rho_\hbar$ on $L^2(\R,d\theta)$, $0\neq\hbar\in\R$ which factor through the Heisenberg group $H=G/Z(G)$; and generic representations $\rho_{\lambda,\mu,\nu}$ on $L^2(\R,d\theta)$ which can be parametrized by three real numbers $\lambda,\mu,\nu$ with $(\lambda,\mu)\neq(0,0)$.

	In the Schr\"odinger representation $\rho_\hbar$ on $L^2(\R,d\theta)$
	the sub-Laplacian $D_0^*D_0$ acts as the quantum harmonic oscillator 
	\begin{equation}\label{E:harm.osc}
		\rho_\hbar(D_0)^*\rho_\hbar(D_0)=|\hbar|\bigl(-\partial^2_\theta+\theta^2\bigr).
	\end{equation}
	This operator has well know spectrum $|\hbar|(2n+1)$, $n=0,1,2,\dotsc$ and zeta regularized determinant $\sqrt2$. 
	Generalizing this classical case, we will compute the spectrum and the zeta regularized determinant of $\rho_\hbar(D_q)^*\rho_\hbar(D_q)$ for each $q$, see Theorem~\ref{T:dets.Schroedinger} below.

	In the generic representation $\rho_{\lambda,\mu,\nu}$ on $L^2(\R;d\theta)$ the sub-Laplacian $D_0^*D_0$ acts as the oscillator
	\[
		\rho_{\lambda,\mu,\nu}(D_0)^*\rho_{\lambda,\mu,\nu}(D_0)=(\lambda^2+\mu^2)^{1/3}\bigl(-\partial_\theta^2+V(\theta)\bigr)
	\]
	with quartic potential 
	\[
		V(\theta)=\left(\frac{\theta^2+\nu(\lambda^2+\mu^2)^{-2/3}}2\right)^2.
	\]
	The spectrum of this operator is not explicitly known, but for $\nu=0$ its regularized determinant has been evaluated by Voros using exact WKB methods, see \cite[Section~4]{V80}, \cite[Eq.~(10.33)]{V83}, \cite{V23} and the references therein.
	The case $\nu=0$ corresponds to the potential $V(\theta)=\theta^4/4$, separating the regime $\nu<0$ with double well potential $V(\theta)=\bigl(\theta+\sqrt{|\nu|}(\lambda^2+\mu^2)^{-1/3}\bigr)^2\bigl(\theta-\sqrt{|\nu|}(\lambda^2+\mu^2)^{-1/3}\bigr)^2/4$ from the regime $\nu>0$ where $V(\theta)$ is convex and strictly positive.
	In more recent work \cite{V04,V08}, Voros obtained asymptotic results relating determinants for more general potentials to the determinants of harmonic oscillators, perturbatively.
	We are not able to compute the regularized determinants of $\rho_{\lambda,\mu,\nu}(D_q)^*\rho_{\lambda,\mu,\nu}(D_q)$ for each $q$, but we will evaluate their alternating product, i.e., the analytic torsion of the Rumin complex in the representation $\rho_{\lambda,\mu,\nu}$, see Theorem~\ref{T:tor.gen} below.

	A remarkable feature of the graded nilpotent Lie algebra $\goe$ is the fact that it has pure cohomology \cite{R99,R01}, i.e., the grading automorphism acts as a scalar in each cohomology $H^q(\goe)$.
	Equivalently, the Rumin differentials $D_q$ on the nilpotent Lie group $G$ are all homogeneous with respect to the grading automorphism.
	This implies that the Rumin complex of a filtered manifold with osculating algebras isomorphic to $\goe$ is a Rockland \cite{R78} complex \cite[Section~2.3]{DH22} and permits to define an analytic torsion for filtered manifolds of this type, cf.~\cite[Section~3]{H22}. 
	
	Having pure cohomology appears to be a rather restrictive property, cf.~\cite[Section~3.7]{H22}.
	We only know three types of graded nilpotent Lie algebras with pure cohomology:
	trivially graded abelian Lie algebras corresponding to the Ray--Singer analytic torsion \cite{RS71,Ch77,Ch79,M78,BZ92};
	Heisenberg algebras corresponding to the Rumin--Seshadri analytic torsion for contact manifolds \cite{RS12,AQ22,K20,K22};
	and the Lie algebra $\goe$ corresponding to generic rank two distributions in dimension five, for which an analytic torsion has recently been proposed in \cite{H22}.
	Abelian Lie algebras are discussed in Appendix~\ref{S:abelian}.
	Analogous results for the 3-dimensional Heisenberg algebra can be found in Appendix~\ref{S:Heisenberg}.
	
	Our study of the regularized determinants of $\rho(D_q)^*\rho(D_q)$ was motivated by an attempt to compute the analytic torsion of nilmanifolds with $(2,3,5)$ geometry using harmonic analysis, that is, by decomposing their Rumin complex into irreducible $G$-representations, cf.~\cite{H71,R71}.
	Building on the results contained in the paper at hand, we show in \cite{H23b} that for (2,3,5) distributions on nilmanifolds the analytic torsion of the Rumin complex coincides with the Ray--Singer torsion. 

\section{Statement of the main results}\label{S:results}

	Let $\goe=\goe_{-3}\oplus\goe_{-2}\oplus\goe_{-1}$ denote the graded nilpotent Lie algebra mentioned in the introduction.
	Let $X_1,X_2$ be a basis of $\goe_{-1}$ and put
	\begin{equation}\label{E:basis.Xi}
		X_3=[X_1,X_2],\qquad X_4=[X_1,X_3],\qquad X_5=[X_2,X_3].
	\end{equation}
	Then $X_1,X_2,X_3,X_4,X_5$ is a graded basis of $\goe$, i.e., $X_3$ is a basis of $\goe_{-2}$ and $X_4,X_5$ is a basis of $\goe_{-3}$.
	The simply connected nilpotent Lie group with Lie algebra $\goe$ will be denoted by $G$.
	Translation of $\goe_{-1}$ provides a left invariant rank two subbundle in the tangent bundle of $G$.
	The associated Rumin complex \cite{R99,R05} is a sequence of left invariant differential operators on $G$,
	\begin{multline}\label{E:Rumin.G}
		C^\infty(G)\otimes H^0(\goe)
		\xrightarrow{D_0}C^\infty(G)\otimes H^1(\goe)
		\xrightarrow{D_1}C^\infty(G)\otimes H^2(\goe)
		\\
		\xrightarrow{D_2}C^\infty(G)\otimes H^3(\goe)
		\xrightarrow{D_3}C^\infty(G)\otimes H^4(\goe)
		\xrightarrow{D_4}C^\infty(G)\otimes H^5(\goe),
	\end{multline}
	satisfying 
	\begin{equation}\label{E:DD=0}
		D_{q+1}D_q=0.
	\end{equation}
	Here $H^q(\goe)=H^q(\goe;\C)$ denotes the cohomology of $\goe$ with coefficients in the trivial complex representation.
	The Betti numbers are $\dim H^q(\goe)=1,2,3,3,2,1$ for $q=0,\dotsc,5$.
	There exist differential operators $L_q\colon C^\infty(G)\otimes H^q(\goe)\to\Omega^q(G)$ that embed the Rumin complex as a subcomplex in the de~Rham complex and induce an isomorphism on cohomology.
	Explicit formulas for the Rumin differentials will be provided in Section~\ref{SS:D.formula}.
	Further details may be found in \cite[\S5]{BENG11}, \cite{DH22}, \cite[Section~3.1]{H22}, and \cite{FT23}.

	The Lie algebra $\goe$ has pure cohomology \cite{R99,R01}, that is, the grading automorphism $\phi_t$, given by multiplication with $t^j$ on $\goe_j$, acts by a scalar on $H^q(\goe)$.
	In fact, $H^q(\phi_t)=t^{N_q}$ where $N_q=0,1,4,6,9,10$ for $q=0,\dotsc,5$.
	This implies that $D_q$ is homogeneous of order $k_q=N_{q+1}-N_q$ with respect to the automorphism $\Phi_t$ of $G$ integrating $\phi_t$.
	More explicitly, $\Phi_t^*D_q=t^{k_q}D_q$ for all real numbers $t\neq0$.
	Hence, we may regard
	\[
		D_q\in\mathcal U^{-k_q}(\goe)\otimes L\bigl(H^q(\goe),H^{q+1}(\goe)\bigr)
	\]
	where the grading of the universal enveloping algebra $\mathcal U(\goe)$ is the one induced from the grading of $\goe=\goe_{-3}\oplus\goe_{-2}\oplus\goe_{-1}$.
	We find $k_q=1,3,2,3,1$ for $q=0,\dotsc,4$.

	Rumin has shown that the operators in \eqref{E:Rumin.G} form a Rockland \cite{R78} complex, i.e., the sequence becomes exact on smooth vectors in every nontrivial irreducible unitary representation of $G$, see \cite[Theorem~1]{R99}, \cite[theorem~5.2]{R01}, or \cite[Corollary~4.18(b)]{DH22}.
	More explicitly, if $\rho\colon G\to U(\mathcal H)$ is a (strongly continuous) irreducible unitary representation on a Hilbert space $\mathcal H$, then
	\begin{multline}\label{E:Rumin.rho}
		\mathcal H_\infty\otimes H^0(\goe)
		\xrightarrow{\rho(D_0)}\mathcal H_\infty\otimes H^1(\goe)
		\xrightarrow{\rho(D_1)}\mathcal H_\infty\otimes H^2(\goe)
		\\
		\xrightarrow{\rho(D_2)}\mathcal H_\infty\otimes H^3(\goe)
		\xrightarrow{\rho(D_3)}\mathcal H_\infty\otimes H^3(\goe)
		\xrightarrow{\rho(D_4)}\mathcal H_\infty\otimes H^5(\goe)
	\end{multline}
	is exact where $\mathcal H_\infty$ denotes the (dense) subspace of smooth vectors in $\mathcal H$, cf.~\cite[Section~2.3]{DH22}.
	Background on the relevant representation theory may be found in \cite[Appendix~V]{K04}.
	For graded nilpotent Lie algebras whose cohomology is not pure, the Rumin complex is Rockland only in a graded sense, and the analysis becomes more involved, see \cite[Section~5]{DH22}.

	All irreducible unitary representations of nilpotent Lie groups can be obtained by the orbit method due to Kirillov \cite{K04}.
	For the 5-dimensional Lie group $G$ these representations have been worked out explicitly by Dixmier in \cite[Proposition~8]{D58}.
	There are three types of irreducible unitary representations of $G$ which we now describe in terms of a graded basis $X_1,\dotsc,X_5$ of $\goe$ satisfying \eqref{E:basis.Xi}.
	In our notation we will not distinguish between the representation of $G$ and the corresponding infinitesimal representation of $\goe$.
	The dual grading will be denoted by $\goe_j^*:=(\goe_{-j})^*$.

	\begin{enumerate}[(I)]
		\item	\emph{Scalar representations:}
			For each $\alpha\in\goe_1^*$ we have a unitary representation $\rho_\alpha$ on $\C$ with
			\begin{equation}\label{E:rep.scalar}
				\rho_\alpha(X_1)=\mathbf i\alpha(X_1),\qquad
				\rho_\alpha(X_2)=\mathbf i\alpha(X_2),
			\end{equation}
			and $\rho_\alpha(X_3)=\rho_\alpha(X_4)=\rho_\alpha(X_5)=0$.
			These are precisely the representations which factor through the abelianization $G/[G,G]\cong\R^2$.
			They correspond to coadjoint orbits contained in $\goe_1^*\subseteq\goe^*$.
			These coadjoint orbits are all 0-dimensional.
		\item	\emph{Schr\"odinger representations:}
			For $0\neq\hbar\in\R$ we have an irreducible unitary representation $\rho_\hbar$ on $L^2(\R)=L^2(\R,d\theta)$ with
			\begin{equation}\label{E:Schroedinger}
				\rho_\hbar(X_1)=\sqrt{|\hbar|}\partial_\theta,\quad
				\rho_\hbar(X_2)=\ii\sgn(\hbar)\sqrt{|\hbar|}\theta,\quad
				\rho_\hbar(X_3)=\ii\hbar,
			\end{equation}
			and $\rho_\hbar(X_4)=\rho_\hbar(X_5)=0$.
			These are precisely the irreducible unitary representations which factor through the 3-dimensional Heisenberg group $H=G/Z(G)$ but do not factor through $G/[G,G]$.
			They correspond to coadjoint orbits contained in $(\goe_1^*\oplus\goe_2^*)\setminus\goe_1^*$.
			These coadjoint orbits are all 2-dimensional.
		\item	\emph{Generic representations:}
			For real numbers $\lambda,\mu,\nu$ with $(\lambda,\mu)\neq(0,0)$ we have an irreducible unitary representation $\rho_{\lambda,\mu,\nu}$ on $L^2(\R)=L^2(\R,d\theta)$ with:
			\begin{align}
				\label{E:rep.gen.X1}
				\rho_{\lambda,\mu,\nu}(X_1)
				&=\frac\lambda{(\lambda^2+\mu^2)^{1/3}}\partial_\theta-\frac{\mathbf i\mu}{(\lambda^2+\mu^2)^{1/3}}\cdot\frac{\theta^2+\nu(\lambda^2+\mu^2)^{-2/3}}2,
				\\\label{E:rep.gen.X2}
				\rho_{\lambda,\mu,\nu}(X_2)
				&=\frac\mu{(\lambda^2+\mu^2)^{1/3}}\partial_\theta+\frac{\mathbf i\lambda}{(\lambda^2+\mu^2)^{1/3}}\cdot\frac{\theta^2+\nu(\lambda^2+\mu^2)^{-2/3}}2,
				\\\label{E:rep.gen.X3}
			        \rho_{\lambda,\mu,\nu}(X_3)
				&=\mathbf i(\lambda^2+\mu^2)^{1/3}\theta,
				\\\label{E:rep.gen.X4}
			        \rho_{\lambda,\mu,\nu}(X_4)
				&=\mathbf i\lambda,
				\\\label{E:rep.gen.X5}
			        \rho_{\lambda,\mu,\nu}(X_5)
				&=\mathbf i\mu.
			\end{align}
			This differs from the representation given in \cite[Equation~(24)]{D58} by a conjugation with a unitary scaling on $L^2(\mathbb R)$, namely $f(\theta)\leftrightarrow\sqrt rf(r\theta)$, where $r=(\lambda^2+\mu^2)^{2/3}$, which we have introduced for better compatibility with the grading automorphism.
			Note that
			\begin{equation}\label{E:rep.gen.nu}
				\rho_{\lambda,\mu,\nu}\bigl(X_3X_3+2X_1X_5-2X_2X_4\bigr)=\nu.
			\end{equation}
			Generic representations correspond to coadjoint orbits in $\goe^*\setminus(\goe_1^*\oplus\goe_2^*)$.
			These coadjoint orbits are all 2-dimensional.
	\end{enumerate}
	The representations listed above are mutually nonequivalent, and they comprise all equivalence classes of irreducible unitary representations of $G$. 

	Let $h_q$ be a Hermitian inner product on $H^q(\goe)$.
	We will denote the corresponding graded Hermitian inner product on $H^*(\goe)$ by $h=\bigoplus_qh_q$.
	If $\rho$ is an irreducible unitary representation on a Hilbert space $\mathcal H$, we let $\rho(D_q)^{*_h}$ denote the formal adjoint $\rho(D_q)$ with respect to $h$ and the inner product on $\mathcal H$.
	The operator 
	\begin{equation}\label{E:|D|}
		|\rho(D_q)|_h:=\sqrt{\rho(D_q)^{*_h}\rho(D_q)}
	\end{equation}
	has an infinite dimensional kernel in general, but the remaining part of the spectrum consists of isolated positive eigenvalues with finite multiplicity only.
	We will denote its zeta function by
	\begin{equation}\label{E:zeta.|rho.D|}
		\zeta_{|\rho(D_q)|_h}(s)
		:=\trs\bigl(\rho(D_q)^{*_h}\rho(D_q)\bigr)^{-s/2}.
	\end{equation}
	Here the notation $\trs$ indicates that the eigenvalue zero does not contribute.

	In Section~\ref{S:heat.asymp} we will work out the asymptotic expansion of the heat trace for positive Rockland operators in irreducible unitary representations and deduce:

	\begin{theorem}\label{T:zeta}
		Let $D_q\in\mathcal U(\goe)\otimes L(H^q(\goe),H^{q+1}(\goe))$ denote the Rumin differentials associated with the 5-dimensional graded Lie algebra $\goe$.
		Moreover, let $h_q$ be any Hermitian inner product on $H^q(\goe)$.
		Then the following hold true:
		\begin{enumerate}[(I)]
			\item	For every nontrivial irreducible unitary representation $\rho$ of $G$, the right hand side of \eqref{E:zeta.|rho.D|} converges for $\Re s$ sufficiently large.
				Moreover, this zeta function extends to a meromorphic function on the entire complex plane which has only simple poles and is holomorphic at $s=0$.
			\item	In the Schr\"odinger representation $\rho=\rho_\hbar$, the right hand side of \eqref{E:zeta.|rho.D|} converges for $\Re s>2/k_q$, and the poles of this zeta function can only be located at $s=(1-j)2/k_q$ with $1\neq j\in\N_0$.
			\item	In the generic representation $\rho=\rho_{\lambda,\mu,\nu}$, the right hand side in \eqref{E:zeta.|rho.D|} converges for $\Re s>3/2k_q$, this zeta function vanishes at $s=0$, and its poles can only be located at $s=(3-2j)/2k_q$, $j\in\N_0$.
		\end{enumerate}
	\end{theorem}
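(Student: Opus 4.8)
The plan is to read off all three statements from the small-time asymptotics of a heat trace, via the Mellin transform. Put $\Delta_q:=\rho(D_q)^{*_h}\rho(D_q)$, a nonnegative self-adjoint operator on $\mathcal H\otimes H^q(\goe)$. Since the complex \eqref{E:Rumin.rho} is Rockland, $\Delta_q$ restricted to the orthogonal complement of its (generally infinite dimensional) kernel has compact resolvent with eigenvalues tending to $+\infty$, the corresponding part of $e^{-t\Delta_q}$ is of trace class for $t>0$, and there is a spectral gap above $0$; in particular $\trs(e^{-t\Delta_q})$ is well defined and decays exponentially as $t\to\infty$. The link to the zeta function is the identity
\begin{equation*}
	\Gamma(s/2)\,\zeta_{|\rho(D_q)|_h}(s)=\int_0^\infty t^{s/2-1}\,\trs\bigl(e^{-t\Delta_q}\bigr)\,dt,
\end{equation*}
so everything is governed by an asymptotic expansion
\begin{equation*}
	\trs\bigl(e^{-t\Delta_q}\bigr)\ \sim\ \sum_{j\ge0}a_j\,t^{\alpha_j}\qquad(t\to0^+)
\end{equation*}
with $(\alpha_j)$ a real sequence increasing to $+\infty$ and, crucially, \emph{no} logarithmic terms. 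Granting this: $\int_1^\infty$ contributes an entire function (spectral gap), $\int_0^1$ a meromorphic one with at most simple poles at $s=-2\alpha_j$, and division by $\Gamma(s/2)$ keeps the poles simple while annihilating any pole at $s=0$ (a term with $\alpha_j=0$ then yields the value $\zeta_{|\rho(D_q)|_h}(0)$, equal to that $t^0$-coefficient); absolute convergence for $\Re s$ large is immediate from the leading term. For a scalar representation $\rho_\alpha$ the Hilbert space is one dimensional and $\zeta_{|\rho(D_q)|_h}$ is a finite Dirichlet polynomial, hence entire. This gives part~(I) modulo the expansion.

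Establishing the expansion is the substance of Section~\ref{S:heat.asymp}. First dispose of the infinite dimensional kernel: Hodge theory in the exact complex \eqref{E:Rumin.rho} gives, with $\Delta_{[p]}:=D_p^{*_h}D_p+D_{p-1}D_{p-1}^{*_h}$,
\begin{equation*}
	\tr\bigl(e^{-t\rho(\Delta_{[p]})}\bigr)=\trs\bigl(e^{-t\Delta_p}\bigr)+\trs\bigl(e^{-t\Delta_{p-1}}\bigr),
\end{equation*}
hence $\trs(e^{-t\Delta_q})=\sum_{p=0}^{q}(-1)^{q-p}\tr(e^{-t\rho(\Delta_{[p]})})$ by telescoping, where $\Delta_{[p]}$ becomes an invertible positive Rockland operator in every nontrivial irreducible unitary representation, so the right hand side is a combination of honest traces of trace class operators. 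On the simply connected group $G$, a positive Rockland operator homogeneous of degree $m$ with respect to $\Phi_t$ generates a heat semigroup with Schwartz convolution kernel $\kappa_t$ satisfying the scaling $\kappa_t=t^{-Q/m}(\kappa_1\circ\Phi_{t^{-1/m}})$, $Q=10$ the homogeneous dimension (Folland--Stein, ter~Elst--Robinson, Hebisch); Kirillov's character formula then turns $\tr\rho_{\mathcal O}(\kappa_t)$ into the integral over the coadjoint orbit $\mathcal O$ of an anisotropic rescaling, by the factor $t^{1/m}$, of the Euclidean Fourier transform $\widehat{\kappa_1}\in\mathcal S(\goe^*)$. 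Since the relevant orbits are affine planes for $\rho_\hbar$ and lie on the quadrics \eqref{E:rep.gen.nu} for $\rho_{\lambda,\mu,\nu}$, while $\widehat{\kappa_1}$ is Schwartz, this integral expands into a genuine power series in $t^{1/m}$ --- no logarithms, which is precisely what forces the poles to be simple. The $\Delta_{[p]}$ are themselves homogeneous only for $p\in\{0,5\}$ (as $k_q=1,3,2,3,1$); for the remaining $p$ one works with the degree-$\mathrm{lcm}(k_0,\dotsc,k_4)$ rescaled Hodge Laplacians of \cite[Section~2.3]{DH22}, adjusting the Mellin variable accordingly.

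The refinements (II) and (III) come from making this orbit integral explicit. For $\rho_\hbar$ the orbit sits in $\goe_1^*\oplus\goe_2^*$ with fixed $\goe_2^*$-component proportional to $\hbar$ and vanishing $\goe_3^*$-component, so the rescaling moves only one coordinate; a Taylor expansion in it produces a power series in $t^{2/m}$ with leading exponent $-2/m$, which translates --- through $\Gamma$ and the relation $m\leftrightarrow k_q$ --- into the expansion with $\alpha_j=(j-1)/k_q$. Equivalently, a dilation $\Phi_t$ together with $\Phi_t^*D_q=t^{k_q}D_q$ reduces matters to $|\hbar|=1$ (up to a factor $|\hbar|^{-sk_q/2}$), where by \eqref{E:Schroedinger} $\Delta_q$ is a differential operator on $L^2(\R,d\theta)$ of Shubin degree $2k_q$ --- a higher analogue of \eqref{E:harm.osc}, with eigenvalues $\asymp n^{k_q}$. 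This gives absolute convergence for $\Re s>2/k_q$ and poles confined to $s=2(1-j)/k_q$, the value $j=1$ (i.e.\ $s=0$) being removed by the $\Gamma$-factor. For $\rho_{\lambda,\mu,\nu}$, a dilation reduces matters, up to a power of $\lambda^2+\mu^2$, to a normalized quartic-type oscillator on $L^2(\R,d\theta)$ (the one with the potential $V$ of the introduction, cf.~\eqref{E:rep.gen.X1}--\eqref{E:rep.gen.X3}), whose orbit is the parabolic sheet \eqref{E:rep.gen.nu}; the orbit integral then expands in steps of $1/(2k_q)$, so $\alpha_j=(2j-3)/(4k_q)$, the leading term $t^{-3/(4k_q)}$ reflecting the Weyl law $N(\Lambda)\asymp\Lambda^{3/(4k_q)}$. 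Hence absolute convergence for $\Re s>3/(2k_q)$ and poles confined to $s=(3-2j)/(2k_q)$; moreover $(2j-3)/(4k_q)$ is never $0$, so there is no $t^0$-term and $\zeta_{|\rho(D_q)|_h}(0)=0$. Throughout, only pole locations and holomorphy are at stake, and these are unaffected by the choice of $h_q$ --- which merely conjugates $\Delta_q$ by a bounded invertible operator of order $0$ --- so these reductions cost no generality.

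The main obstacle is the heat-trace expansion itself: justifying the trace-class statements and Kirillov's character formula for the matrix-valued Rockland heat kernels, controlling the telescoped sum over the Hodge Laplacians $\Delta_{[p]}$ of varying homogeneity uniformly, and --- most delicately --- proving that the resulting orbit integrals expand into honest power series, i.e.\ ruling out logarithmic terms; this is exactly what upgrades ``meromorphic'' to ``at most simple poles''. With that expansion in hand, the meromorphic continuation and the pole-location statements reduce to the routine Mellin bookkeeping sketched above.
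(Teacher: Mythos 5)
Your proposal is correct and follows essentially the same route as the paper: the heat-trace expansion you identify as the main obstacle is exactly Theorem~\ref{T:asymp} of Section~\ref{S:heat.asymp} (proved via the Schwartz heat kernel of a homogeneous positive Rockland operator, the scaling \eqref{E:kt.homog}, and the Kirillov-type orbit integral \eqref{E:tr.eAt}), and Theorem~\ref{T:zeta} is then deduced by Mellin transform together with a recursion over $q$. The one step to tighten is that after passing to the rescaled Hodge Laplacians --- the Rumin--Seshadri operators \eqref{E:Rumin.Seshadri}, which are needed because $\Delta_{[p]}$ is inhomogeneous and $\rho(D_q)^{*_h}\rho(D_q)$ alone is not Rockland (infinite-dimensional kernel) --- your heat-trace telescoping no longer holds verbatim, since the two halves enter with different powers $a_{q-1}\neq a_q$; it must be replaced by the zeta-level identity \eqref{E:zeta.Delta}, $\zeta_{\rho(\Delta_{h,q})}(s/2)=\zeta_{|\rho(D_{q-1})|_h}(a_{q-1}s)+\zeta_{|\rho(D_q)|_h}(a_qs)$, which is what ``adjusting the Mellin variable'' has to mean and is precisely how the paper extracts parts (I)--(III) recursively.
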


	In view of Theorem~\ref{T:zeta}, we may define zeta regularized determinants
	\begin{equation}\label{E:det.def}
		\log\dets|\rho(D_q)|_h
		:=-\zeta'_{|\rho(D_q)|_h}(0).
	\end{equation}
	Here the notation $\dets$ indicates that we are considering the regularized product of all nonzero eigenvalues.
	The analytic torsion $\tau_h(\rho(D))$ is the graded determinant of the Rumin complex, i.e.,
	\[
		\tau_h(\rho(D))
		:=\sdets|\rho(D)|_h
		:=\frac{\dets|\rho(D_0)|_h\cdot\dets|\rho(D_2)|_h\cdot\dets|\rho(D_4)|_h}{\dets|\rho(D_1)|_h\cdot\dets|\rho(D_3)|_h}.
	\]
	Equivalently,
	\begin{equation}\label{E:tau.D.def}
		\log\tau_h(\rho(D))
		=\log\sdets|\rho(D)|_h
		=\sum_{q=0}^4(-1)^q\log\dets|\rho(D_q)|_h.
	\end{equation}

	To formulate our first result, we consider a Hermitian inner product $h_g$ on $H^*(\goe)$ that is induced from a graded Euclidean inner product $g=g_{-3}\oplus g_{-2}\oplus g_{-1}$ on $\goe=\goe_{-3}\oplus\goe_{-2}\oplus\goe_{-1}$.
	To such a graded Euclidean inner product we associate a positive number $a_g$ and a Euclidean inner product $b_g$ on $\goe_{-1}$ such that 
	\begin{align}\label{E:ag}
		g_{-2}([X,Y],[X,Y])
		&=a_g\cdot\bigl(g_{-1}(X,X)g_{-1}(Y,Y)-g_{-1}(X,Y)^2\bigr)
		\\\label{E:bg}
		g_{-3}\bigl([X,Z],[Y,Z]\bigr)
		&=g_{-2}(Z,Z)\cdot b_g(X,Y)
	\end{align}
	hold for all $X,Y\in\goe_{-1}$ and $Z\in\goe_{-2}$.

	\begin{theorem}\label{T:dets.scalar}
		Let $D_q\in\mathcal U(\goe)\otimes L(H^q(\goe),H^{q+1}(\goe))$ denote the Rumin differentials associated with the 5-dimensional graded Lie algebra $\goe=\goe_{-3}\oplus\goe_{-2}\oplus\goe_{-1}$.
		Moreover, let $h_g$ denote the Hermitian inner product on $H^*(\goe)$ induced from a graded Euclidean inner product $g$ on $\goe$.
		Then, in the nontrivial scalar representation $\rho_\alpha$ on $\C$ corresponding to $0\neq\alpha\in\goe_1^*$, we have
		\begin{align*}
			\dets|\rho_\alpha(D_4)|_{h_g}
			=\dets|\rho_\alpha(D_0)|_{h_g}
			&=\|\alpha\|_g,
			\\
			\dets|\rho_\alpha(D_3)|_{h_g}
			=\dets|\rho_\alpha(D_1)|_{h_g}
			&=\frac{\|\alpha\|_g^2\|\alpha\|_{b_g}}{\sqrt{a_g}},
			\\
			\dets|\rho_\alpha(D_2)|_{h_g}
			&=\frac{\|\alpha\|_g^2\|\alpha\|_{b_g}^2}{a_g\cdot\tr(b_g^{-1}g_{-1})},
		\end{align*}
		where $a_g$ and $b_g$ are defined in \eqref{E:ag} and \eqref{E:bg}.
		For the torsion we find
		\begin{equation}\label{E:tor.scalar}
			\tau_{h_g}(\rho_\alpha(D))=\frac1{\tr(b_g^{-1}g_{-1})}.
		\end{equation}
	\end{theorem}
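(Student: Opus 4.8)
The plan is to exploit that in a scalar representation $\rho_\alpha$ everything is finite-dimensional, so the zeta regularized determinants reduce to products of nonzero eigenvalues. Indeed, $\rho_\alpha$ acts on $\mathcal H=\C$, so each $\rho_\alpha(D_q)$ is a linear map between the finite-dimensional Hermitian spaces $\C\otimes H^q(\goe)$ and $\C\otimes H^{q+1}(\goe)$, of dimensions $1,2,3,3,2,1$. Hence $\zeta_{|\rho_\alpha(D_q)|_{h_g}}(s)$ is a finite sum $\sum_{\lambda>0}\lambda^{-s/2}$, entire in $s$, and by \eqref{E:det.def}
\[
\log\dets|\rho_\alpha(D_q)|_{h_g}=-\zeta'_{|\rho_\alpha(D_q)|_{h_g}}(0)=\frac12\sum_{\lambda>0}\log\lambda,
\]
the sum over the nonzero eigenvalues of $\rho_\alpha(D_q)^{*_{h_g}}\rho_\alpha(D_q)$; equivalently, $\dets|\rho_\alpha(D_q)|_{h_g}$ is the product of the nonzero singular values of $\rho_\alpha(D_q)$ with respect to the inner products induced by $g$. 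Since $\alpha\neq0$, the representation $\rho_\alpha$ is nontrivial, so \eqref{E:Rumin.rho} is an exact complex; together with the Betti numbers $1,2,3,3,2,1$ this (or a direct look at the operators below) gives $\rk\rho_\alpha(D_q)=1,1,2,1,1$, so that $\rho_\alpha(D_q)^{*_{h_g}}\rho_\alpha(D_q)$ has exactly one positive eigenvalue, except for $q=2$, where it has two.

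Next I would write the operators down explicitly. Since $\rho_\alpha(X_3)=\rho_\alpha(X_4)=\rho_\alpha(X_5)=0$, while $\rho_\alpha(X_1)=\ii\alpha(X_1)$ and $\rho_\alpha(X_2)=\ii\alpha(X_2)$ are commuting scalars, only the summands of $D_q$ that are monomials in $X_1,X_2$ survive under $\rho_\alpha$; by homogeneity of $D_q$ of degree $-k_q$ this part is a homogeneous polynomial of degree $k_q$ in $\ii\alpha(X_1),\ii\alpha(X_2)$. Reading it off from the formulas in Section~\ref{SS:D.formula} exhibits $\rho_\alpha(D_q)$, in the standard bases of $H^q(\goe)$, as an explicit matrix of degree-$k_q$ monomials in $\ii\alpha(X_1),\ii\alpha(X_2)$. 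For $q=0$ this is transparent: $D_0$ is the horizontal de~Rham differential, so $\rho_\alpha(D_0)1=\ii\alpha\in\goe_1^*=H^1(\goe)$, whence $\dets|\rho_\alpha(D_0)|_{h_g}=\|\alpha\|_{h_{g,1}}=\|\alpha\|_g$, where $h_{g,1}$ is the inner product on $H^1(\goe)=\goe_1^*$ dual to $g_{-1}$.

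The substance lies in the cases $q=1,2$. For these I would form the Hermitian positive semidefinite matrix $\rho_\alpha(D_q)^{*_{h_g}}\rho_\alpha(D_q)$, using the explicit description of the induced inner products $h_{g,q}$ on $H^q(\goe)$ in terms of $g_{-1},g_{-2},g_{-3}$ together with the defining relations \eqref{E:ag} and \eqref{E:bg} for $a_g$ and $b_g$, and then extract the nonzero spectrum. For $q=1$ (size $2$, rank $1$) the single nonzero eigenvalue should come out to $\|\alpha\|_g^4\,\|\alpha\|_{b_g}^2/a_g$, yielding $\dets|\rho_\alpha(D_1)|_{h_g}$. For $q=2$ (size $3$, rank $2$) the product of the two nonzero eigenvalues should equal $\bigl(\|\alpha\|_g^2\,\|\alpha\|_{b_g}^2/(a_g\,\tr(b_g^{-1}g_{-1}))\bigr)^2$, yielding $\dets|\rho_\alpha(D_2)|_{h_g}$. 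The remaining two determinants can be computed the same way, or obtained at once from the Poincar\'e self-duality of the Rumin complex, which identifies $\rho_\alpha(D_{4-q})$ with the $h_g$-adjoint of $\rho_\alpha(D_q)$ up to the Hodge-star isometries of $g$, giving $\dets|\rho_\alpha(D_3)|_{h_g}=\dets|\rho_\alpha(D_1)|_{h_g}$ and $\dets|\rho_\alpha(D_4)|_{h_g}=\dets|\rho_\alpha(D_0)|_{h_g}$. Substituting all five into \eqref{E:tau.D.def}, every power of $\|\alpha\|_g$, $\|\alpha\|_{b_g}$ and $a_g$ cancels, leaving \eqref{E:tor.scalar}.

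I expect the main obstacle to be the $q=2$ computation: one must first pin down the induced inner product $h_{g,2}$ on the three-dimensional $H^2(\goe)$, and then check that the two nonzero eigenvalues of $\rho_\alpha(D_2)^{*_{h_g}}\rho_\alpha(D_2)$ multiply to the claimed value. The factor $\tr(b_g^{-1}g_{-1})$ is the delicate feature: once the $\alpha$- and $a_g$-dependent prefactor is stripped off, this product of eigenvalues is controlled by a $3\times3$ Gram matrix assembled from the two Euclidean structures $g_{-1}$ and $b_g$ on $\goe_{-1}$, and the relevant invariant of such a pair is precisely $\tr(b_g^{-1}g_{-1})$. The only other hazard is keeping the powers of $a_g$ and the square root in \eqref{E:|D|} consistent across all five determinants.
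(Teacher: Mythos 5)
Your outline follows the same route as the paper: in the scalar representation everything is finite dimensional, so $\dets|\rho_\alpha(D_q)|_{h_g}$ is the product of the nonzero singular values; the matrices of $\rho_\alpha(D_q)$ are read off from \eqref{E:D0}--\eqref{E:D4} by setting $X_3=X_4=X_5=0$; the cases $q=3,4$ follow from Poincar\'e duality \eqref{E:Hodge.det}; and the torsion \eqref{E:tor.scalar} drops out of \eqref{E:tau.D.def} by cancellation (your cancellation is correct). The rank count $1,1,2,1,1$ and the $q=0$ computation are also fine.

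The gap is that the actual content of the theorem --- the nonzero eigenvalue of $\rho_\alpha(D_1)^{*_{h_g}}\rho_\alpha(D_1)$ and the product of the two nonzero eigenvalues of $\rho_\alpha(D_2)^{*_{h_g}}\rho_\alpha(D_2)$, i.e.\ precisely the quantities producing $\|\alpha\|_{b_g}$, $a_g$ and $\tr(b_g^{-1}g_{-1})$ --- is only announced (``should come out to''), not established, and the prerequisite for that computation, an explicit description of the induced inner products $h_{g,q}$ on $H^q(\goe)$ for a \emph{general} graded $g$, is not supplied. The paper does not attack a general $g$ head on: it first uses the equivariance under $\Aut_\grr(\goe)$ (\eqref{E:Aut.det}, \eqref{E:Aut.hg}, \eqref{E:Aut.ag.bg}, \eqref{E:Aut.norms}, \eqref{E:Aut.tr}, \eqref{E:Aut.rho.alpha}) together with the principal axis theorem to reduce to the case where $X_1,X_2$ is $g_{-1}$-orthonormal and $b_g(X_1,X_2)=0$; only then do the $h_{g,q}$ become the diagonal matrices \eqref{E:hq}. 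This normalization is also where a real pitfall hides: the representative $\tfrac1{\sqrt2}(\chi^{135}+\chi^{234})$ from \eqref{E:bases} is \emph{not} orthogonal to $\img\partial_2$ when $b^{11}_g\neq b^{22}_g$, so the middle entry of $h_{g,3}$ must be computed with the corrected harmonic representative --- exactly the entry that feeds into your $q=2$ determinant and the factor $\tr(b_g^{-1}g_{-1})$. The eigenvalue extraction itself is then handled in the paper via the factorizations in \eqref{E:D1.scalar}, \eqref{E:D2.scalar} and the identity $\spec_*(AB)=\spec_*(BA)$ \eqref{E:specAB}. Your strategy would succeed once these two ingredients (the normalization/explicit $h_{g,q}$, and the actual spectral computation) are carried out, but as written the decisive steps are missing.
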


	As the Rumin complex associated with a scalar representation is finite dimensional, the latter theorem is an elementary statement that will be proved in Section~\ref{S:scalar}.
	We have included it here mainly because it shows that for scalar representations the torsion does depend on the graded Euclidean inner product on $\goe$.
	For all other (infinite dimensional) irreducible unitary representations of $G$, the torsion turns out to be independent of the graded Euclidean inner product $g$ on $\goe$.
	We do not know if the analytic torsion of a generic rank two distribution on a closed 5-manifold is independent of the metric choices, but the dependence has been shown to be local, see \cite[Theorem~1.1]{H22}.

	Given a Euclidean inner product $g_{-1}$ on $\goe_{-1}$, the equations \eqref{E:ag} and \eqref{E:bg} can be used to extend it to a graded Euclidean inner product $g$ on $\goe$ by requiring $a_g=1$ and $b_g=g_{-1}$.
	According to \eqref{E:tor.scalar}, this seemingly natural choice of an extension yields a nontrivial torsion, $\tau_{h_g}(\rho_\alpha(D))=1/2$.

	If $B$ denotes the Killing form of $\Lie(G_2)$, see \eqref{E:LieG2}, and $\theta$ is a Cartan involution such that $\theta(\goe_j)=\goe_{-j}$, cf.~\cite[Section~3.3.1]{CS09}, then $B_\theta(X,Y)=-B(X,\theta Y)$ is a graded Euclidean inner product on $\Lie(G_2)$.
	For its restriction $g=B_\theta|_\goe$ we find $a_g=4$ and $b_g=3g_{-1}$, see \cite[Equations~(3.21) and (3.39)]{S08} or \cite[Eqs.~(147) and (148)]{H22}.
	In view of \eqref{E:tor.scalar}, this $g$ yields a nontrivial torsion too, $\tau_{h_g}(\rho_\alpha(D))=3/2$.

	The next result gives the regularized determinants of the Rumin differentials in the Schr\"odinger representations with respect to slightly restricted graded Euclidean inner products on $\goe$.

	\begin{theorem}\label{T:dets.Schroedinger}
		Let $D_q\in\mathcal U(\goe)\otimes L(H^q(\goe),H^{q+1}(\goe))$ denote the Rumin differentials associated with the 5-dimensional graded Lie algebra $\goe=\goe_{-3}\oplus\goe_{-2}\oplus\goe_{-1}$.
		Moreover, let $h_g$ denote the Hermitian inner product on $H^*(\goe)$ induced from a graded Euclidean inner product $g$ on $\goe$ such that $b_g$ is proportional to $g$, cf.~\eqref{E:bg}.
		Then, in the Schr\"odinger representation $\rho_\hbar$ on $L^2(\R)$, $\hbar\neq0$, we have
		\begin{align}
			\label{E:det.D0.Schroedinger}
			\dets|\rho_\hbar(D_4)|_{h_g}
			=\dets|\rho_\hbar(D_0)|_{h_g}
			&=2^{1/4},
			\\
			\label{E:det.D1.Schroedinger}
			\dets|\rho_\hbar(D_3)|_{h_g}
			=\dets|\rho_\hbar(D_1)|_{h_g}
			&=2^{3/4}\cdot\sin^{1/2}\left(\pi\cdot\frac{\sqrt2-1}2\right),
			\\
			\label{E:det.D2.Schroedinger}
			\dets|\rho_\hbar(D_2)|_{h_g}
			&=2\cdot\sin\left(\pi\cdot\frac{\sqrt2-1}2\right).
		\end{align}
		In particular, the torsion is trivial,
		\begin{equation}\label{E:tor.Schroedinger}
			\tau_{h_g}(\rho_\hbar(D))=1.
		\end{equation}
	\end{theorem}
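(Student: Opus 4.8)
The plan is to reduce everything to the spectral theory of the quantum harmonic oscillator; since the Schr\"odinger representation factors through the Heisenberg group $H=G/Z(G)$, this is really a computation on $H$, parallel to and compatible with the one in Appendix~\ref{S:Heisenberg}. Abbreviate $\Delta_q:=\rho_\hbar(D_q)^{*_{h_g}}\rho_\hbar(D_q)$, an operator on $L^2(\R)\otimes H^q(\goe)$; because $|\rho_\hbar(D_q)|_{h_g}=\sqrt{\Delta_q}$, it suffices to determine the nonzero spectrum of each $\Delta_q$ with multiplicities and then to zeta-regularize. First I would take the explicit formulas for the Rumin differentials from Section~\ref{SS:D.formula}, substitute the Schr\"odinger representation \eqref{E:Schroedinger} --- so that only $\rho_\hbar(X_1)=\sqrt{|\hbar|}\,\partial_\theta$, $\rho_\hbar(X_2)=\ii\sgn(\hbar)\sqrt{|\hbar|}\,\theta$ and $\rho_\hbar(X_3)=\ii\hbar$ are nonzero --- and compute the adjoints with respect to $h_g$. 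The hypothesis that $b_g$ be proportional to $g$ is exactly what renders the coefficients of the resulting matrices of operators $\hbar$-homogeneous and independent of the metric up to an overall scale, so I would impose it throughout. In terms of the ladder operators $a=\tfrac1{\sqrt2}(\theta+\partial_\theta)$, $a^{*}=\tfrac1{\sqrt2}(\theta-\partial_\theta)$ and the number operator $N=a^{*}a$, one has $\rho_\hbar(D_0)^{*}\rho_\hbar(D_0)=|\hbar|(2N+1)$, cf.~\eqref{E:harm.osc}, and each $\rho_\hbar(D_q)$ becomes a $\dim H^{q+1}(\goe)\times\dim H^q(\goe)$ matrix whose entries are homogeneous polynomials in $a$, $a^{*}$ and $\hbar$.

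The core step is to diagonalize each $\Delta_q$. By homogeneity of $D_q$ and of the representation, $\Delta_q$ in $\rho_\hbar$ is unitarily equivalent, via a dilation on $L^2(\R)$, to $|\hbar|^{k_q}$ times the corresponding operator in $\rho_1$, so I may set $\hbar=1$ and restore $|\hbar|^{k_q}$ at the end. Grouping Hermite functions appropriately, each $\Delta_q$ is block diagonal, reducing to finitely many finite matrices with entries polynomial in the level $n$; I would compute their characteristic polynomials and verify that these factor into linear terms in $n$ over $\R$. By the Rockland exactness of \eqref{E:Rumin.rho}, $L^2(\R)\otimes H^q(\goe)$ splits orthogonally into $\overline{\img\rho_\hbar(D_{q-1})}$, on which $\Delta_q$ vanishes, and $\overline{\img\rho_\hbar(D_q)^{*}}$, on which $\Delta_q$ is invertible with the same spectrum as $\rho_\hbar(D_q)\rho_\hbar(D_q)^{*}$ on $\overline{\img\rho_\hbar(D_q)}$; tracking the Betti numbers $1,2,3,3,2,1$ through this decomposition pins down the positive part of $\Delta_q$ as a finite union of eigenvalue families $|\hbar|^{k_q}\prod_{i=1}^{k_q}(n+c_{q,\ell,i})$, $n\ge n_{q,\ell}$, with explicit constants whose only irrational values should be the conjugate pair $\tfrac{\sqrt2-1}2$, $\tfrac{3-\sqrt2}2$ --- the roots of a quadratic factor whose coefficients involve the Betti numbers. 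Isolating these factors together with their multiplicities and index ranges is, I expect, the main obstacle. To halve the bookkeeping I would invoke the Hodge-star/Poincar\'e duality of the Rumin complex, which intertwines $\rho_\hbar(D_q)$, up to a unitary conjugation and a scalar, with $\rho_\hbar(D_{4-q})^{*}$ and hence yields $\dets|\rho_\hbar(D_3)|_{h_g}=\dets|\rho_\hbar(D_1)|_{h_g}$ and $\dets|\rho_\hbar(D_4)|_{h_g}=\dets|\rho_\hbar(D_0)|_{h_g}$ at once, and I would cross-check via the Hodge Laplacians, whose spectra satisfy $\det\bigl(\rho_\hbar(D_q)^{*}\rho_\hbar(D_q)+\rho_\hbar(D_{q-1})\rho_\hbar(D_{q-1})^{*}\bigr)=\dets|\rho_\hbar(D_q)|_{h_g}^{2}\cdot\dets|\rho_\hbar(D_{q-1})|_{h_g}^{2}$.

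With the spectra in hand, the zeta-regularization is routine. Each eigenvalue family contributes to $\zeta_{|\rho_\hbar(D_q)|_{h_g}}(s)$ a term $(|\hbar|^{k_q})^{-s/2}\sum_{n\ge n_{q,\ell}}\bigl(\prod_i(n+c_{q,\ell,i})\bigr)^{-s/2}$, and since the constants in a given family are symmetric about $\tfrac12$ one has $\prod_i(n+c_{q,\ell,i})=(n+\tfrac12)^{k_q}\bigl(1+O((n+\tfrac12)^{-2})\bigr)$, so the binomial expansion writes this as a convergent combination of Hurwitz zeta functions $\zeta(\,\cdot\,,\tfrac12)$. In particular each family has zeta-value $0$ at $s=0$ --- consistent with Theorem~\ref{T:zeta} --- so the power $|\hbar|^{k_q}$ and every overall constant drop out of the determinant, which confirms $\hbar$-independence. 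Differentiating at $s=0$ and using $\prod_{n\ge0}(n+c)=\sqrt{2\pi}/\Gamma(c)$ together with $\Gamma(c)\Gamma(1-c)=\pi/\sin(\pi c)$ for the conjugate pair $c=\tfrac{\sqrt2-1}2$ produces the two building blocks $\sqrt2=\prod_{n\ge0}(n+\tfrac12)$ and $2\sin\bigl(\pi\tfrac{\sqrt2-1}2\bigr)=\prod_{n\ge0}(n+\tfrac{\sqrt2-1}2)(n+\tfrac{3-\sqrt2}2)$, which assemble into \eqref{E:det.D0.Schroedinger}--\eqref{E:det.D2.Schroedinger} (the square roots there reflecting the passage from $\Delta_q$ to $\sqrt{\Delta_q}$). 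Finally, substituting these values and $\dets|\rho_\hbar(D_3)|_{h_g}=\dets|\rho_\hbar(D_1)|_{h_g}$, $\dets|\rho_\hbar(D_4)|_{h_g}=\dets|\rho_\hbar(D_0)|_{h_g}$ into \eqref{E:tau.D.def}, the alternating sums of the $\log2$-coefficients and of the $\log\sin\bigl(\pi\tfrac{\sqrt2-1}2\bigr)$-coefficients both vanish, giving $\tau_{h_g}(\rho_\hbar(D))=1$; equivalently, triviality of the torsion is the single identity $\dets|\rho_\hbar(D_0)|_{h_g}\,\dets|\rho_\hbar(D_2)|_{h_g}\,\dets|\rho_\hbar(D_4)|_{h_g}=\dets|\rho_\hbar(D_1)|_{h_g}\,\dets|\rho_\hbar(D_3)|_{h_g}$, i.e.\ $2^{3/2}\sin\bigl(\pi\tfrac{\sqrt2-1}2\bigr)=2^{3/2}\sin\bigl(\pi\tfrac{\sqrt2-1}2\bigr)$.
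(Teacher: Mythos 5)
Your overall route is the same as the paper's (compute the spectra of $\rho_\hbar(D_q)^{*}\rho_\hbar(D_q)$ explicitly, zeta--regularize via Gamma--function products, and use Poincar\'e duality for $q=3,4$), but the central spectral claim on which your regularization rests is not correct, and the step you defer as ``the main obstacle'' is in fact the substance of the proof. Concretely: the assertion that the positive spectrum of each $\Delta_q$ consists of families $|\hbar|^{k_q}\prod_{i=1}^{k_q}(n+c_{q,\ell,i})$ fails for $q=2$. The nonzero eigenvalues of $D_2^{*}D_2$ in $\rho_\hbar$ are $\lambda_n^\pm=\hbar^2\bigl(\tfrac{\sqrt{8(2n+1)^2+9}\pm5}{4}\bigr)^2$, see \eqref{E:spec2.Schroedinger}; these are not polynomial (let alone a product of linear factors) in $n$, so a characteristic polynomial ``factoring into linear terms in $n$ over $\R$'' cannot be verified --- it is false. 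Only the paired product $\lambda_n^+\lambda_n^-=4|\hbar|^4\bigl(n+\tfrac{1+\sqrt2}2\bigr)^2\bigl(n+\tfrac{1-\sqrt2}2\bigr)^2$ has the form you need, and passing from $\zeta_{D_2^*D_2}(s)=\sum_n\bigl((\lambda_n^+)^{-s}+(\lambda_n^-)^{-s}\bigr)$ to the auxiliary function $\xi(s)=\sum_n(\lambda_n^+\lambda_n^-)^{-s}$ requires an additional argument (the paper shows the difference $\zeta_{D_2^*D_2}(s)-2\xi(s/2)$ is given by a series converging for $\Re s>-\tfrac12$ whose value and derivative vanish at $s=0$, whence $\zeta_{D_2^*D_2}(0)=2\xi(0)$ and $\zeta'_{D_2^*D_2}(0)=\xi'(0)$). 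Without this pairing-and-comparison step your Hurwitz-zeta reduction does not reach the stated value $2\sin\bigl(\pi\tfrac{\sqrt2-1}2\bigr)$, even though that value is correct.

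The second gap is the diagonalization itself. ``Grouping Hermite functions appropriately'' does not block-diagonalize $D_1^{*}D_1$ (a $2\times2$ matrix of sixth-order operators) or $D_2^{*}D_2$ into finite matrices with entries polynomial in the level: the entries shift Hermite levels, and both operators have infinite-dimensional kernels interleaved with the positive part. The paper has to work for this: it factors $D_1=ZA$ and proves $\spec_*(A^2)=\spec(\Delta^2)$ by constructing an intertwiner $B=AJD_0$ with $AB=B\Delta$ and matching the leftover kernels (Lemma~\ref{L:specA2}), giving $\spec_*(D_1^*D_1)=|\hbar|^3(2n+1)\lvert(2n+1)^2-2\rvert$; and for $q=2$ it proves $\spec_*(\star D_2)=\spec(G)$ with $G=\begin{pmatrix}-2X_3&\Delta/\sqrt2\\-\Delta/\sqrt2&-\tfrac12X_3\end{pmatrix}$ via further intertwiners (Lemmas~\ref{L:specsD2} and \ref{L:spec.G}); only \emph{after} this reduction does a genuine $2\times2$ block structure over Hermite levels appear, and it is this $G$ that produces the square-root eigenvalues above. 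So your proposal needs (i) a mechanism replacing these intertwining lemmas to actually obtain the spectra, and (ii) the $\xi$-comparison to handle the non-factoring spectrum in degree $2$; the remaining ingredients (vanishing of the zeta functions at $0$, hence $\hbar$- and scale-independence, the reduction of general $h_g$ with $b_g\propto g$ to the standard one, Poincar\'e duality, and the final cancellation giving $\tau_{h_g}(\rho_\hbar(D))=1$) are in line with the paper.
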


	The statement expressed in \eqref{E:det.D0.Schroedinger} is a well know classical result that has only been included for the sake of completeness.
	Indeed, the operator $\rho_\hbar(D_0)^*\rho_\hbar(D_0)$ is the quantum harmonic oscillator with spectrum $|\hbar|(2n+1)$, $n\in\N_0$.
	Therefore, $\zeta_{|\rho_\hbar(D_0)|_{h_g}}(s)=|\hbar|^{-s/2}\cdot(1-2^{-s/2})\cdot\zeta_\Riem(s/2)$, where $\zeta_\Riem(s)=\sum_{n=1}^\infty n^{-s}$ denotes the Riemann zeta function.
	Classically, we have $\zeta_\Riem(0)=-\frac12$ and thus $\zeta_{|\rho_\hbar(D_0)|_{h_g}}'(0)=-\frac14\log2$, that is \eqref{E:det.D0.Schroedinger}.

	In Section~\ref{S:Schroedinger} we will compute the spectra of $|\rho_\hbar(D_q)|_{h_g}$ and evaluate $\zeta_{|\rho_\hbar(D_q)|_{h_g}}(s)$ and its derivative at $s=0$ to obtain the new determinants in Theorem~\ref{T:dets.Schroedinger} corresponding to $q=1,2,3$.
	Clearly, the equalities in \eqref{E:det.D0.Schroedinger} remain true for general graded Euclidean inner products $g$ on $\goe$, but we do not know if this is also the case for \eqref{E:det.D1.Schroedinger} and \eqref{E:det.D2.Schroedinger}.

	Equation \eqref{E:tor.Schroedinger}, however, remains true for arbitrary graded Euclidean inner products $g$ on $\goe$ and all infinite dimensional irreducible unitary representations of $G$.
	More precisely, we have

	\begin{theorem}\label{T:tor.gen}
		Let $D_q\in\mathcal U(\goe)\otimes L(H^q(\goe),H^{q+1}(\goe))$ denote the Rumin differentials associated with the 5-dimensional graded Lie algebra $\goe=\goe_{-3}\oplus\goe_{-2}\oplus\goe_{-1}$.
		Then, for every generic representation $\rho_{\lambda,\mu,\nu}$ of $G$, and any graded Hermitian inner product $h$ on $H^*(\goe)$, we have
		\[
			\tau_h(\rho_{\lambda,\mu,\nu}(D))=1.
		\]
		Moreover, for every Schr\"odinger representation $\rho_\hbar$ of $G$, and any Hermitian inner product $h_g$ induced from a graded Euclidean inner product $g$ on $\goe$, we have 
		\[
			\tau_{h_g}(\rho_\hbar(D))=1.
		\]
	\end{theorem}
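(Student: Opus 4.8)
The plan is to show first that $\tau_h(\rho(D))$ does not depend on the inner product $h$, and then to evaluate it for a convenient choice. Using \eqref{E:tau.D.def} together with the orthogonal decomposition of $\mathcal H_\infty\otimes H^q(\goe)$ afforded by the exact complex \eqref{E:Rumin.rho}, the torsion can be rewritten as
\[
	\log\tau_h(\rho(D))=-\tfrac12\sum_{q=0}^{5}(-1)^q\,q\log\dets\Delta_q,\qquad \Delta_q:=\rho(D_q)^{*_h}\rho(D_q)+\rho(D_{q-1})\rho(D_{q-1})^{*_h}
\]
(with $D_{-1}=D_5=0$), where each $\Delta_q$ is invertible because \eqref{E:Rumin.rho} is exact, so $\dets\Delta_q=\exp(-\zeta'_{\Delta_q}(0))$ with $\zeta_{\Delta_q}(s)=\tr\Delta_q^{-s}=\zeta_{|\rho(D_q)|_h}(2s)+\zeta_{|\rho(D_{q-1})|_h}(2s)$, holomorphic at $s=0$ by Theorem~\ref{T:zeta}(I). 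For the first part, let $u\mapsto h^u$ be a smooth path of graded Hermitian inner products and set $\gamma_q:=(h^u_q)^{-1}\tfrac{d}{du}h^u_q\in\eend(H^q(\goe))$. The standard variational argument for Ray--Singer type torsion, transported to the present setting with the heat trace asymptotics of Section~\ref{S:heat.asymp}, gives
\[
	\tfrac{d}{du}\log\tau_{h^u}(\rho(D))=-\tfrac12\sum_{q=0}^{5}(-1)^q\bigl(\text{constant term of }\tr\bigl((\id\otimes\gamma_q)e^{-t\Delta_q}\bigr)\text{ as }t\to0^+\bigr).
\]
In a generic representation the exponents of $t$ occurring in these expansions are of the form $\tfrac{2j-3}{4k_q}$ and $\tfrac{2j-3}{4k_{q-1}}$ with $j\in\N_0$, hence never $0$, and the set of exponents is unchanged when $\gamma_q$ varies, so every constant term on the right hand side vanishes. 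In a Schr\"odinger representation the same conclusion follows from part~(II) of Theorem~\ref{T:zeta} together with the fact that, once the action of $\goe_0\cong\gl_2(\R)$ on $H^*(\goe)$ is diagonalized, each $\Delta_q$ is a direct sum of polynomial functions of the harmonic oscillator, whose constant heat coefficient vanishes. Hence $\log\tau_{h^u}(\rho(D))$ is independent of $u$.

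For a Schr\"odinger representation $\rho_\hbar$ we may therefore assume that $b_g$ is proportional to $g$, and then \eqref{E:det.D0.Schroedinger}--\eqref{E:det.D2.Schroedinger} yield
\[
	\log\tau_{h_g}(\rho_\hbar(D))=2\cdot\tfrac14\log 2-2\Bigl(\tfrac34\log 2+\tfrac12\log\sin\bigl(\pi\tfrac{\sqrt2-1}{2}\bigr)\Bigr)+\log 2+\log\sin\bigl(\pi\tfrac{\sqrt2-1}{2}\bigr)=0,
\]
so $\tau_{h_g}(\rho_\hbar(D))=1$.

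For a generic representation the individual determinants are transcendental quantities of the type studied by Voros and are not known in closed form, so their alternating product must be evaluated indirectly. Since $\goe$ is the free step three nilpotent Lie algebra on two generators, $\Aut_{\mathrm{gr}}(\goe)=\GL_2(\R)$, and combining these automorphisms with the gradings $\Phi_t$ one finds that every generic representation is, up to an automorphism of $\goe$, equivalent to $\rho_{1,0,\nu}$ with $\nu\in\{-1,0,1\}$; as the torsion is insensitive to automorphisms of $\goe$ and, by the first paragraph, to $h$, it depends on $(\lambda,\mu,\nu)$ only through the sign of $\nu$. On the other hand $\nu\mapsto\log\tau_h(\rho_{\lambda,\mu,\nu}(D))$ is continuous and, by the variational argument above applied to the family of operators $\rho_{\lambda,\mu,\nu}(D_q)$, locally constant in $\nu$; hence it is globally constant and it suffices to treat $\nu=0$. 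For $\rho_{1,0,0}$ the sub-Laplacian is the quartic oscillator $\Delta_0=-\partial_\theta^2+\theta^4/4$. Using that $\goe$ has pure cohomology, so that $H^*(\goe)$ is an $\mathfrak{sl}_2$-module with respect to which the $D_q$ are equivariant, one shows---exactly as in the Schr\"odinger computation of Section~\ref{S:Schroedinger}, with the harmonic oscillator replaced by $\Delta_0$---that each $\rho_{1,0,0}(D_q)^*\rho_{1,0,0}(D_q)$ is, after an orthogonal decomposition, a direct sum of polynomial functions of $\Delta_0$ and of $\Delta_0':=\rho_{1,0,0}(D_0)\rho_{1,0,0}(D_0)^*$, the latter having the same nonzero spectrum as $\Delta_0$. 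Consequently each $\log\dets|\rho_{1,0,0}(D_q)|_h$ is a $\Z$-linear combination, with universal coefficients, of values $\log\dets(\Delta_0-a)$ of the quartic spectral determinant, together with an elementary term recording the shifts; the universal coefficients are governed by the same $\mathfrak{sl}_2$-combinatorics as in the Schr\"odinger case, so these transcendental contributions cancel in $\sum_{q=0}^4(-1)^q\log\dets|\rho_{1,0,0}(D_q)|_h$, while the remaining elementary term is precisely the alternating sum of $\log 2$'s and $\log\sin(\pi\tfrac{\sqrt2-1}{2})$'s displayed in the Schr\"odinger case, hence $0$. Therefore $\tau_h(\rho_{\lambda,\mu,\nu}(D))=1$.

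The main obstacle is this last step: one has to determine explicitly, from the $\mathfrak{sl}_2$-module structure of $H^*(\goe)$ and the way the operators $\rho_{\lambda,\mu,\nu}(X_i)$ act relative to it, the polynomial functions of $\Delta_0$ and $\Delta_0'$ out of which the Rumin Laplacians $\rho_{\lambda,\mu,\nu}(D_q)^*\rho_{\lambda,\mu,\nu}(D_q)$ are assembled, and then to verify both that the Voros-type spectral determinants cancel in the alternating product and that the elementary remainder vanishes. The independence statement and the Schr\"odinger case should be comparatively routine once the heat-trace machinery of Section~\ref{S:heat.asymp} is in place.
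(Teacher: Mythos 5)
Your reduction steps (metric independence of the torsion in generic representations, constancy under $\Aut_\grr(\goe)$ and continuity in the parameters, and the evaluation for $b_g$ proportional to $g$ via Theorem~\ref{T:dets.Schroedinger}) broadly parallel the paper, but the decisive step of your argument --- the actual evaluation of the torsion in a generic representation --- is a genuine gap, and you acknowledge it yourself. Your plan is to reduce to $\rho_{1,0,0}$ and decompose each $\rho_{1,0,0}(D_q)^*\rho_{1,0,0}(D_q)$ into ``polynomial functions of the quartic oscillator'' governed by universal $\mathfrak{sl}_2$-combinatorics, so that the Voros-type spectral determinants cancel in the alternating product. Nothing supports such a decomposition: the structural identities of Section~\ref{S:Schroedinger} (Lemmas~\ref{L:specA2} and~\ref{L:specsD2}) depend crucially on $X_3$ acting as a central scalar in the Schr\"odinger representation, whereas in $\rho_{1,0,0}$ one has $\rho(X_3)=\mathbf i\theta$, which is not central; indeed the paper states explicitly that the individual determinants $\dets|\rho_{\lambda,\mu,\nu}(D_q)|_h$ cannot be computed. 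The paper's mechanism is entirely different: for $\nu<0$ it proves a polyhomogeneous expansion of $\tr\bigl(e^{-t\rho_{r\lambda,r\mu,\nu}(\Delta_{h,q})}\bigr)$ in the two variables $(t,r)$ (Theorem~\ref{T:poly}), deduces the finite-part identity \eqref{E:const} (Theorem~\ref{T:Melin}), hence the product formula \eqref{E:tau.tau}, $\tau_h(\rho_{\lambda,\mu,\nu}(D))=\tau_h(\rho_{\hbar}(D))\cdot\tau_h(\rho_{-\hbar}(D))$, and only then feeds in Theorem~\ref{T:dets.Schroedinger} (or, alternatively, the $\nu>0$ degeneration of Remark~\ref{R:Melin.pos}). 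Your route replaces this degeneration-in-the-unitary-dual argument by an explicit spectral computation that is neither carried out nor plausibly outlined.

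A second gap concerns the Schr\"odinger case for a general graded Euclidean inner product $g$. You reduce to $b_g$ proportional to $g$ by asserting that the torsion in a Schr\"odinger representation is independent of $h$, on the grounds that the constant heat coefficients vanish because the Laplacians are ``polynomial functions of the harmonic oscillator''. The anomaly involves the weighted traces $\tr\bigl((\id\otimes\gamma_q)e^{-t\Delta_q}\bigr)$ with an arbitrary $\gamma_q\in\eend(H^q(\goe))$ that need not respect the Hodge decomposition, so these are not sums of spectral zeta-type quantities of the operators $D^*D$; moreover in the Schr\"odinger expansion \eqref{E:asymp.Schroedinger} the constant term is the $j=1$ coefficient, for which no parity argument is available (the parity argument kills the constant term only in the generic case). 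The paper makes no such independence claim; it obtains the general-$g$ Schr\"odinger statement from the product formula \eqref{E:tau.tau} together with a reflection $\phi\in\Aut_\grr(\goe)$ preserving $g$ with $\rho_\hbar\circ\phi=\rho_{-\hbar}$ and the positivity of the torsion --- ingredients your proposal does not have. Relatedly, your variational formula is written with the non-homogeneous Laplacians $\Delta_q=\rho(D_q)^{*_h}\rho(D_q)+\rho(D_{q-1})\rho(D_{q-1})^{*_h}$, to which the heat-trace asymptotics of Section~\ref{S:heat.asymp} do not apply (Theorem~\ref{T:asymp} requires homogeneity); the paper runs the variation with the homogeneous Rumin--Seshadri operators \eqref{E:Rumin.Seshadri} and the weights $N_q$, cf.~\eqref{E:tor.Delta}, precisely to stay within that framework, and your claim that the exponents occurring in $\tr\bigl((\id\otimes\gamma_q)e^{-t\Delta_q}\bigr)$ are $(2j-3)/4k_q$ and $(2j-3)/4k_{q-1}$ is therefore unjustified as stated.
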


	Let us close this section with a few remarks regarding the proofs of Theorems~\ref{T:zeta} and \ref{T:tor.gen}.
	Following \cite{RS71,RS12,H22}, we will express the zeta function of $|\rho(D_q)|_h$ in terms of a Hodge type Laplacian which is analytically better behaved.
	To this end, we fix natural numbers $a_q$ such that $\kappa=a_qk_q$ is independent of $q$.
	The smallest possible choice is $a_q=6,2,3,2,6$ for $q=0,1,2,3,4$, which gives $\kappa=6$.
	Then, $(D_q^{*_h}D_q)^{a_q}$ and $(D_{q-1}D_{q-1}^{*_h})^{a_{q-1}}$ both are homogeneous of degree $2\kappa$, and so is the Rumin--Seshadri \cite{RS12} operator $\Delta_{h,q}\in\mathcal U^{-2\kappa}(\goe)\otimes\eend(H^q(\goe))$,
	\begin{equation}\label{E:Rumin.Seshadri}
		\Delta_{h,q}
		:=(D_{q-1}D_{q-1}^{*_h})^{a_{q-1}}+(D_q^{*_h}D_q)^{a_q},
	\end{equation}
	cf.~\cite[Section~2.2]{H22}.
	As the sequence in \eqref{E:Rumin.rho} is exact, $\Delta_{h,q}$ is a Rockland operator, i.e.,
	\[
		\rho(\Delta_{h,q})
		=\bigl(\rho(D_{q-1})\rho(D_{q-1})^{*_h}\bigr)^{a_{q-1}}+\bigl(\rho(D_q)^{*_h}\rho(D_q)\bigr)^{a_q}
	\]
	is injective on smooth vectors, for every nontrivial irreducible unitary representation $\rho$ of $G$, cf.~\cite[Lemma~2.14]{DH22}.
	In Section~\ref{S:heat.asymp} we will show that $\rho(\Delta_{h,q})^{-s}$ is trace class for $\Re s$ sufficiently large, depending on $q$ and the representation $\rho$, and that the zeta function
	\[
		\zeta_{\rho(\Delta_{h,q})}(s)
		=\tr\rho(\Delta_{h,q})^{-s}
	\]
	can be analytically continued to a meromorphic function on the entire complex plane which has only simple poles and is holomorphic at $s=0$.
	In particular, we may define the regularized determinants by
	\begin{equation}\label{E:def.det.Delta}
		\log\det(\rho(\Delta_{h,q}))
		:=-\zeta'_{\rho(\Delta_{h,q})}(0).
	\end{equation}

	From the properties of the Rumin--Seshadri operators collected in the preceding paragraph one can extract analogous facts for $|\rho(D_q)|_h$, cf.~\cite[Remark~2.9]{H22}.
	Indeed, using \eqref{E:zeta.|rho.D|}, \eqref{E:DD=0} and the fact that $\rho(D_{q-1})\rho(D_{q-1})^{*_h}$ and $\rho(D_{q-1})^{*_h}\rho(D_{q-1})$ have the same nontrivial spectrum, we obtain
	\begin{equation}\label{E:zeta.Delta}
		\zeta_{\rho(\Delta_{h,q})}(s/2)=\zeta_{|\rho(D_{q-1})|_h}(a_{q-1}s)+\zeta_{|\rho(D_q)|_h}(a_qs).
	\end{equation}
	As the pole structure of the analytic continuation $\zeta_{\rho(\Delta_{h,q})}(s)$ is understood for each $q$, the analytic extension and pole structure of $\zeta_{|\rho(D_q)|_h}(s)$ can be deduced recursively using \eqref{E:zeta.Delta}, whence Theorem~\ref{T:zeta}.
	From \eqref{E:zeta.Delta} we also obtain
	\[
		\tfrac12\log\det(\rho(\Delta_{h,q}))
		=a_{q-1}\log\dets|\rho(D_{q-1})|_h+a_q\log\dets|\rho(D_q)|_h.
	\]
	Combining this with $a_qk_q=\kappa$, and $k_{q-1}=N_q-N_{q-1}$, and \eqref{E:tau.D.def}, we see that the analytic torsion can be expressed in the form
	\begin{equation}\label{E:tor.Delta}
		\log\tau_h(\rho(D))
		=\frac1{2\kappa}\sum_{q=0}^5(-1)^{q+1}N_q\log\det(\rho(\Delta_{h,q})),
	\end{equation}
	cf.~\cite[Definition~1.6]{RS71}, \cite[p.~728]{RS12}, and \cite[Section~2.3]{H22}.
	
	To prove Theorem~\ref{T:tor.gen}, we will, for $\nu<0$, establish a polyhomogeneous expansion of the heat trace $\tr\bigl(e^{-t\rho_{r\lambda,r\mu,\nu}(\Delta_{h,q})}\bigr)$ in the variables $t$ and $r$, see Theorem~\ref{T:poly} in Section~\ref{S:poly.heat}.
	This yields, for $\nu<0$, an asymptotic expansion of the form 
	\begin{equation}\label{E:asymp.tau}
		\zeta_{\rho_{r\lambda,r\mu,\nu}(\Delta_{h,q})}'(0)
		\sim\sum_{k=0}^\infty z_kr^{2k-2}+\sum_{k=1}^\infty\tilde z_kr^{4k-2}\log r
	\end{equation}
	as $r\to0$, with constant term
	\[
		\LIM_{r\to0}\zeta_{\rho_{r\lambda,r\mu,\nu}(\Delta_{h,q})}'(0)
		=z_1
		=\zeta_{\rho_{\hbar}(\Delta_{h,q})}'(0)+\zeta_{\rho_{-\hbar}(\Delta_{h,q})}'(0),
	\]
	where $\rho_{\pm\hbar}$ denotes the Schr\"odinger representations with $\hbar=\sqrt{|\nu|}$,
	see Theorem~\ref{T:Melin} in Section~\ref{S:zeta.asymp}.
	For the analytic torsion this implies
	\begin{equation}\label{E:LIM.tau}
		\LIM_{r\to0}\log\tau_h\bigl(\rho_{r\lambda,r\mu,\nu}(D)\bigr)
		=\log\bigl(\tau_h(\rho_\hbar(D)\bigr)+\log\tau_h\bigl(\rho_{-\hbar}(D)\bigr).
	\end{equation}
	In Section~\ref{S:proof} we will show that $\log\tau_h\bigl(\rho_{\lambda,\mu,\nu}(D)\bigr)$ is independent of $\lambda,\mu,\nu$ and $h$.
	If $h=h_g$ is induced from a graded Euclidean inner product on $\goe$ such that $b_g$ is proportional to $g$, then the right hand side in \eqref{E:LIM.tau} vanishes according to Theorem~\ref{T:dets.Schroedinger}.
	Combining these facts with a symmetry argument, we will complete the proof of Theorem~\ref{T:tor.gen}.
	
	For $q=0$ the leading (singular and constant) terms of an asymptotic formula similar to the one in \eqref{E:asymp.tau} have been determined by Voros \cite[Eq.~(5.7)]{V04} using exact WKB methods.
	However, the perturbation studied by Voros does not appear to coincide with the one used in this paper, see Remark~\ref{R:Voros} in Section~\ref{S:proof} for more details.

\section{The Rumin complex of the (2,3,5) nilpotent Lie group}\label{S:Rumin}

	In this section we collect several elementary facts about the Rumin complex.

\subsection{Invariance under graded automorphisms}

	We will denote the group of graded automorphism of $\goe=\goe_{-3}\oplus\goe_{-2}\oplus\goe_{-1}$ by $\Aut_\grr(\goe)$.
	Let $X_1,\dotsc,X_5$ be a graded basis of $\goe$ such that \eqref{E:basis.Xi} holds.
	With respect to this basis, a graded automorphism $\phi\in\Aut_\grr(\goe)$ is represented by the block matrix
	\begin{equation}\label{E:Aut.basis}
		\phi
		=\begin{pmatrix}\Phi\\&\det\Phi\\&&\Phi\det(\Phi)\end{pmatrix}
	\end{equation}
	where $\Phi$ denotes the $2\times2$ matrix representing the component $\phi|_{\goe_{-1}}\in\GL(\goe_{-1})$ with respect to the basis $X_1,X_2$ of $\goe_{-1}$.
	This follows readily from the relations in \eqref{E:basis.Xi}.
	In particular, restriction provides an isomorphism
	\begin{equation}\label{E:Aut}
		\Aut_\grr(\goe)=\GL(\goe_{-1})\cong\GL_2(\R).
	\end{equation}
	
	Suppose $\phi\in\Aut_\grr(\goe)$.
	By naturality, cf.~\cite{R99,R01,R05} or \cite[Lemma~3.2]{H22}, 
	\begin{equation}\label{E:Aut.D}
		\phi\cdot D_q=D_q
	\end{equation}
	where the dot denotes the natural left action of $\phi$ on $\mathcal U(\goe)\otimes L\bigl(H^q(\goe),H^{q+1}(\goe)\bigr)$.

	Suppose $\rho\colon G\to U(\mathcal H)$ is a (strongly continuous) unitary representation on a Hilbert space $\mathcal H$.
	From \eqref{E:Aut.D} we obtain
	\begin{equation}\label{E:Aut.pi.D}
		(\rho\circ\phi)(D_q)
		=H^{q+1}(\phi)^{-1}\rho(D_q)H^q(\phi).
	\end{equation}
	Here $H^q(\phi)$ denotes the isomorphism induced by $\phi^{-1}$ on the cohomology $H^q(\goe)$, that is, we have covariant functoriality, $H^q(\phi_1\phi_2)=H^q(\phi_1)H^q(\phi_2)$.

	Suppose $h_q$ is a Hermitian inner product on $H^q(\goe)$.
	From \eqref{E:Aut.pi.D}, we obtain
	\begin{equation}\label{E:Aut.pi.Ds}
		\bigl((\rho\circ\phi)(D_q)\bigr)^{*_h}
		=H^q(\phi)^{-1}\rho(D_q)^{*_{\phi\cdot h}}H^{q+1}(\phi).
	\end{equation}
	Here $*_h$ denotes the formal adjoint with respect to the Hermitian inner product on the Hilbert space $\mathcal H$ and the graded inner product $h=\bigoplus_qh_q$ on $H^*(\goe)$.
	Moreover, $\phi\cdot h$ denotes the natural left action of $\phi$ on $h$.

	Combining \eqref{E:|D|} with \eqref{E:Aut.pi.D} and \eqref{E:Aut.pi.Ds}, we obtain
	\begin{equation}\label{E:Aut.pi.|D|}
		|(\rho\circ\phi)(D_q)|_h
		=H^q(\phi)^{-1}|\rho(D_q)|_{\phi\cdot h}H^q(\phi).
	\end{equation}
	In particular, $|(\rho\circ\phi)(D_q)|_h$ and $|\rho(D_q)|_{\phi\cdot h}$ are isospectral.
	From \eqref{E:zeta.|rho.D|} we get
	\begin{equation}\label{E:Aut.zeta}
		\zeta_{|(\rho\circ\phi)(D_q)|_h}(s)
		=
		\zeta_{|\rho(D_q)|_{\phi\cdot h}}(s)
	\end{equation}
	and using \eqref{E:det.def} we obtain
	\begin{equation}\label{E:Aut.det}
		\dets|(\rho\circ\phi)(D_q)|_h
		=
		\dets|\rho(D_q)|_{\phi\cdot h}.
	\end{equation}
	In particular, see \eqref{E:tau.D.def},
	\begin{equation}\label{E:Aut.tau}
		\tau_h\bigl((\rho\circ\phi)(D)\bigr)
		=
		\tau_{\phi\cdot h}(\rho(D)).
	\end{equation}

	Suppose $g$ is a graded Euclidean inner product on $\goe=\goe_{-3}\oplus\goe_{-2}\oplus\goe_{-1}$.
	The induced Hermitian inner product on $\Lambda^*\goe^*\otimes\C$ provides an adjoint $\partial^{*_g}_q$ of the Chevalley--Eilenberg differentials $\partial_q\colon\Lambda^q\goe^*\otimes\C\to\Lambda^{q+1}\goe^*\otimes\C$ which provides an identification
	\[
		H^q(\goe)=\frac{\ker\partial_q}{\img\partial_{q-1}}=\ker\partial_q\cap\ker\partial_{q-1}^{*_g}
	\] 
	by finite dimensional Hodge theory for the Chevalley--Eilenberg complex.
	Via this identification, the induced Hermitian inner product $h_{g,q}$ on $H^q(\goe)$ coincides with the restriction of the Hermitian inner product on $\Lambda^q\goe^*\otimes\C$ to harmonic forms.
	As the action of $\phi$ on $\Lambda^*\goe^*\otimes\C$ commutes with $\partial$, we have 
	\begin{equation}\label{E:Aut.hg}
		\phi\cdot h_{g,q}=h_{\phi\cdot g,q}
	\end{equation}
	where the dots denote the natural left action of $\phi\in\Aut_\grr(\goe)$ on the Hermitian inner product on $H^q(\goe)$ and the graded Euclidean inner product on $\goe$, respectively.

	From \eqref{E:ag} and \eqref{E:bg}, we obtain
	\begin{equation}\label{E:Aut.ag.bg}
		a_{\phi\cdot g}=a_g,\qquad
		b_{\phi\cdot g}=\phi\cdot b_g.
	\end{equation}
	For the expressions appearing in the statement of Theorem~\ref{T:dets.scalar} we find
	\begin{equation}\label{E:Aut.norms}
		\|\alpha\circ\phi|_{\goe_{-1}}\|_g=\|\alpha\|_{\phi\cdot g},\qquad
		\|\alpha\circ\phi|_{\goe_{-1}}\|_{b_g}=\|\alpha\|_{b_{\phi\cdot g}},
	\end{equation}
	as well as $b^{-1}_{\phi\cdot g}(\phi\cdot g)_{-1}=\phi b_g^{-1}g_{-1}\phi|^{-1}_{\goe_{-1}}$ and thus
	\begin{equation}\label{E:Aut.tr}
		\tr\bigl(b^{-1}_{\phi\cdot g}(\phi\cdot g)_{-1}\bigr)=\tr(b_g^{-1}g_{-1}).
	\end{equation}

\subsection{The action of graded automorphisms on the unitary dual}\label{SS:Aut.dual}

	In this section we collect a few facts about the action of $\Aut_\grr(\goe)$ on the set of equivalence classes of irreducible unitary representations of $G$.
	According to Kirillov \cite{K04} this may be identified with the action of $\Aut_\grr(\goe)$ on the set of coadjoint orbits $\goe^*/G$.
	If $\rho_{\mathcal O}$ denotes an irreducible unitary representation corresponding to the coadjoint orbit $\mathcal O$, and $\phi\in\Aut_\grr(\goe)$, then the representation $\rho_{\mathcal O}\circ\phi$ is unitarily equivalent to the representation corresponding to the coadjoint orbit $\phi^t(\mathcal O)$ where $\phi^t\in\GL(\goe^*)$ denotes the transposed map.
	Below we will make this more explicit in terms of the parametrizations of different types of representations described in Section~\ref{S:results}.
	
	For $\alpha\in\goe_1^*$ we clearly have
	\begin{equation}\label{E:Aut.rho.alpha}
		\rho_\alpha\circ\phi
		=\rho_{\alpha\circ\phi|_{\goe_{-1}}}
	\end{equation}
	cf.~\eqref{E:rep.scalar}.
	In particular, the action of $\Aut_\grr(\goe)$ on the space of nontrivial unitary scalar representations is transitive, cf.~\eqref{E:Aut}.
	
	For $\hbar\neq0$ we have a unitary equivalence of representations,
	\begin{equation}\label{E:Aut.rho.h}
		\rho_\hbar\circ\phi\sim\rho_{\hbar\det(\phi|_{\goe_{-1}})}.
	\end{equation}
	This follows from \eqref{E:Aut.basis} and the classification of irreducible unitary representations recalled in Section~\ref{S:results}, characterizing $\rho_\hbar$ as the unique irreducible unitary representation, up to unitary equivalence, for which $\rho_\hbar(X_3)=\mathbf i\hbar$ and $\rho_\hbar(X_4)=0=\rho_\hbar(X_5)$, cf.~\eqref{E:Schroedinger}.
	In particular, the action of $\Aut_\grr(\goe)$ on the space of Schr\"odinger representations is transitive, cf.~\eqref{E:Aut}.

	For real numbers $\lambda,\mu,\nu$ with $(\lambda,\mu)\neq(0,0)$ we have a unitary equivalence of representations,
	\begin{equation}\label{E:Aut.rho.gen}
		\rho_{\lambda,\mu,\nu}\circ\phi\sim\rho_{\lambda',\mu',\nu'}
	\end{equation}
	where 
	\[
		(\lambda',\mu')=(\lambda,\mu)\Phi\det\Phi,\qquad 
		\nu'=\nu{\det}^2\Phi,
	\]
	and $\Phi$ is the matrix appearing in \eqref{E:Aut.basis}.
	Indeed, $\rho_{\lambda,\mu,\nu}$ can be characterized as the unique irreducible unitary representation, up to unitary equivalence, satisfying \eqref{E:rep.gen.X4}, \eqref{E:rep.gen.X5}, and \eqref{E:rep.gen.nu}.
	
	In particular, the action of $\Aut_\grr(\goe)$ on the space of generic representations has three orbits, characterized by $\nu<0$, $\nu=0$, and $\nu>0$, respectively.
	The latter will play an important role in the proof of Theorem~\ref{T:tor.gen} for it implies, by connectedness, that a continuous function on the space of generic representations $\left\{(\lambda,\mu,\nu)\in\R^3:(\lambda,\mu)\neq(0,0)\right\}$ which is constant on orbits of $\Aut_\grr(\goe)$ must be constant altogether.

\subsection{Formulas for the Rumin differentials}\label{SS:D.formula}

	We continue to use a graded basis $X_1,\dotsc,X_5$ such that \eqref{E:basis.Xi} holds.
	Let $\chi^j$ denote the dual basis, that is, $\chi^j(X_k)=\delta^j_k$.
	Then $\chi^1,\chi^2,\chi^3,\chi^4,\chi^5$ is a graded basis of $\goe^*=\goe_1^*\oplus\goe_2^*\oplus\goe_3^*$ where we use the notation $\goe_j^*:=(\goe_{-j})^*$ for the dual grading.
	More explicitly, $\chi^1,\chi^2$ is a basis of $\goe_1^*=(\goe_{-1})^*$; $\chi^3$ is a basis of $\goe_2^*=(\goe_{-2})^*$; and $\chi^4,\chi^5$ is a basis of $\goe_3^*=(\goe_{-3})^*$.
	A straight forward computation \cite[Appendix~B]{DH17} yields the following bases for the kernels of the Chevalley--Eilenberg differentials $\partial_q$:
	\begin{equation}\label{E:bases}
		\begin{array}{|c|r|l|}
			\multicolumn{3}{c}{\rule[-1.5ex]{0ex}{1ex}\text{basis of $\ker\bigl(\Lambda^q\goe^*\otimes\C\xrightarrow{\partial_q}\Lambda^{q+1}\goe^*\otimes\C\bigr)$}}
			\\\hline
			q
			&\text{basis of $\img\partial_{q-1}$}
			&\rule[-1.6ex]{0ex}{4.4ex}\text{induces basis of $H^q(\goe)=\frac{\ker\partial_q}{\img\partial_{q-1}}$}
			\\\hline
			0
			&\emptyset
			&1
			\\
			1
			&\emptyset
			&\chi^1,\chi^2
			\\
			2
			&\chi^{12},\chi^{13},\chi^{23}
			&\chi^{14},\tfrac1{\sqrt2}(\chi^{15}+\chi^{24}),\chi^{25}
			\\
			3
			&\chi^{123},\chi^{124},\chi^{125},\tfrac1{\sqrt2}(\chi^{135}-\chi^{234})
			&\chi^{134},\tfrac1{\sqrt2}(\chi^{135}+\chi^{234}),\chi^{235}
			\\
			4
			&\chi^{1234},\chi^{1235},\chi^{1245}
			&\chi^{1345},\chi^{2345}
			\\
			5
			&\emptyset
			&\chi^{12345}
			\\\hline
		\end{array}
	\end{equation}
	Here we use the notation $\chi^{j_1\dotsc j_k}=\chi^{j_1}\wedge\cdots\wedge\chi^{j_k}$.

	With respect to the basis of $H^q(\goe)=\frac{\ker\partial_q}{\img\partial_{q-1}}$ provided by the forms in the right column in \eqref{E:bases}, the Rumin differentials $D_q\in\mathcal U(\goe)\otimes L(H^q(\goe),H^{q+1}(\goe))$ are:
	\begin{align}
		\label{E:D0}
		D_0&=\begin{pmatrix}X_1\\ X_2\end{pmatrix}
		\\\label{E:D1}
		D_1&=\begin{pmatrix}-X_{112}-X_{13}-X_4&X_{111}\\-\sqrt2X_{122}-\sqrt2X_5&\sqrt2X_{211}-\sqrt2X_4\\-X_{222}&X_{221}-X_{23}-X_5\end{pmatrix}
		\\\label{E:D2}
		D_2&=\begin{pmatrix}-X_{12}-X_3&X_{11}/\sqrt2&0\\-X_{22}/\sqrt2&-\tfrac32X_3&X_{11}/\sqrt2\\0&-X_{22}/\sqrt2&X_{21}-X_3\end{pmatrix}
		\\\label{E:D3}
		D_3&=\begin{pmatrix}X_{122}+X_{32}-X_5&-\sqrt2X_{112}+\sqrt2X_4&X_{111}\\X_{222}&-\sqrt2X_{221}-\sqrt2X_5&X_{211}-X_{31}+X_4\end{pmatrix}
		\\\label{E:D4}
		D_4&=\begin{pmatrix}-X_2,X_1\end{pmatrix}
	\end{align}
	Here we use the notation $X_{j_1\cdots j_k}=X_{j_1}\cdots X_{j_k}$ in the universal enveloping algebra $\mathcal U(\goe)$.
	These formulas for $D_q$ have been derived in \cite[Appendix~B]{DH17}, where they are expressed with respect to a slightly different basis, see also \cite[Example~4.21]{DH22}.

	Let $g$ be a graded Euclidean inner product in $\goe$.	
	Up to a graded automorphism of $\goe$, we may assume that $X_1,X_2$ is a orthonormal basis of $\goe_{-1}$ with respect to $g_{-1}$, cf.~\eqref{E:Aut}.
	By the principal axis theorem and \eqref{E:Aut.ag.bg}, we may moreover assume $b_g(X_1,X_2)=0$, i.e., the matrix $b^g_{ij}:=b_g(X_i,X_j)$, $1\leq i,j\leq2$, is diagonal.
	Hence, the graded Euclidean inner product $g$ on $\goe$ and the induced Euclidean inner product $g^{-1}$ on $\goe^*$, with respect to the basis $X_1,X_2,X_3,X_4,X_5$ of $\goe$ and the dual basis $\chi^1,\chi^2,\chi^3,\chi^4,\chi^5$ on $\goe^*$ are given by the matrices
	\[
		g=\begin{pmatrix}1\\&1\\&&a_g\\&&&a_gb^g_{11}\\&&&&a_gb^g_{22}\end{pmatrix}
		\quad\text{and}\quad
		g^{-1}=\begin{pmatrix}1\\&1\\&&1/a_g\\&&&b^{11}_g/a_g\\&&&&b^{22}_g/a_g\end{pmatrix},
	\]
	respectively, where $b^{jj}_g:=1/b^g_{jj}$, see \eqref{E:ag} and \eqref{E:bg}.
	With respect to the ordered bases of $H^q(\goe)$ provided by the forms in the right column of \eqref{E:bases}, the Hermitian inner products $h_{g,q}$ on $H^q(\goe)=\frac{\ker\partial_q}{\img\partial_{q-1}}$ are given by the diagonal matrices:
	\begin{align}\label{E:hq}
		\notag
		h_{g,0}&=\begin{pmatrix}1\end{pmatrix},
		&
		h_{g,1}&=\begin{pmatrix}1\\&1\end{pmatrix},
		\\
		h_{g,2}&=\frac1{a_g}\begin{pmatrix}b^{11}_g\\&\frac{b^{11}_g+b^{22}_g}2\\&&b^{22}_g\end{pmatrix},
		&
		h_{g,3}&=\frac{b^{11}_gb^{22}_g}{a^2_g}\begin{pmatrix}\frac1{b^{22}_g}\\&\frac2{b^{11}_g+b^{22}_g}\\&&\frac1{b^{11}_g}\end{pmatrix},
		\\\notag
		h_{g,4}&=\frac{b^{11}_gb^{22}_g}{a^3_g}\begin{pmatrix}1\\&1\end{pmatrix},
		&
		h_{g,5}&=\frac{b^{11}_gb^{22}_g}{a^3_g}\begin{pmatrix}1\end{pmatrix}.
	\end{align}
	Note here that all forms in the right column of \eqref{E:bases} are orthogonal to $\img\partial$ but $\frac1{\sqrt2}(\chi^{135}+\chi^{234})$ if $b^{11}_g\neq b^{22}_g$.
	The form representing the same cohomology class that is orthogonal to $\img\partial_2$ is
	\[
		\frac1{\sqrt2}\bigl(\chi^{135}+\chi^{234}\bigr)+\frac{b^{11}_g-b^{22}_g}{b^{11}_g+b^{22}_g}\cdot\frac1{\sqrt2}\bigl(\chi^{135}-\chi^{234}\bigr)
		=\frac{\sqrt2}{b^{11}_g+b^{22}_g}\bigl(b^{11}_g\chi^{135}+b^{22}_g\chi^{234}\bigr).
	\]
	This needs to be taken into account when computing the middle entry of $h_{g,3}$.

\subsection{Poincar\'e duality}\label{SS:PD}

	Let $g$ be a graded Euclidean inner product on $\goe$, and consider the induced Hermitian inner product $h_g$ on $H^*(\goe)$.
	The Hodge star operator $\star_{g,q}\colon\Lambda^q\goe^*\otimes\C\to\Lambda^{5-q}\goe^*\otimes\C$ associated with $g$ restricts to an isometry on $H^*(\goe)=\ker\partial\cap\ker\partial^{*_g}$ that will be denoted by $\star_g=\bigoplus_q\star_{g,q}$.
	We have
	\[
		D_q^{*_{h_g}}
		=(-1)^{q+1}\star^{-1}_gD_{4-q}\star_g
	\]
	and
	\[
		D_q^{*_{h_g}}D_q
		=\star_g^{-1}D_{4-q}D_{4-q}^{*_{h_g}}\star_g,
	\]
	see \cite[Section~2]{R99}, \cite[Proposition~2.8]{R01}, or \cite[Section~3.3]{H22}.
	Moreover, $\star_g^2=\id$ as $\dim\goe=5$ is odd.

	If $\rho$ is a unitary representation of $G$, we obtain
	\begin{equation}\label{E:Hodge.star}
		\rho(D_q)^{*_{h_g}}=(-1)^{q+1}\star^{-1}_g\rho(D_{4-q})\star_g
	\end{equation}
	and
	\begin{equation}\label{E:Hodge.DDs}
		\rho(D_q)^{*_{h_g}}\rho(D_q)
		=\star_g^{-1}\rho(D_{4-q})\rho(D_{4-q})^{*_{h_g}}\star_g.
	\end{equation}
	As $A^*A$ and $AA^*$ have the same nontrivial spectrum, this yields
	\begin{equation}\label{E:Hodge.zeta}
		\zeta_{|\rho(D_q)|_{h_g}}(s)
		=
		\zeta_{|\rho(D_{4-q})|_{h_g}}(s)
	\end{equation}
	and then
	\begin{equation}\label{E:Hodge.det}
		\dets|\rho(D_q)|_{h_g}
		=
		\dets|\rho(D_{4-q})|_{h_g}
	\end{equation}
	cf.~\eqref{E:zeta.|rho.D|} and \eqref{E:det.def}.
	In particular, the analytic torsion \eqref{E:tau.D.def} may be expressed in terms of three determinants,
	\begin{equation}\label{E:PD.tor}
		\tau_{h_g}(\rho(D))
		=\frac{\dets^2|\rho(D_0)|_{h_g}\cdot\dets|\rho(D_2)|_{h_g}}{\dets^2|\rho(D_1)|_{h_g}}.
	\end{equation}

	If $X_1,X_2$ is an orthonormal basis and $b_g(X_1,X_2)=0$, then, with respect to the basis of $H^*(\goe)$ provided by the forms in the right column in \eqref{E:bases}, the Hodge star operators are represented by the following matrices:
	\begin{align*}
		\star_{g,0}&=\frac{\sqrt{a_g^3}}{\sqrt{b_g^{11}b_g^{22}}}\begin{pmatrix}1\end{pmatrix},
		&
		\star_{g,1}&=\frac{\sqrt{a_g^3}}{\sqrt{b_g^{11}b_g^{22}}}\begin{pmatrix}&-1\\1\end{pmatrix},
		\\
		\star_{g,2}&=\frac{\sqrt{a_g}}{\sqrt{b_g^{11}b_g^{22}}}\begin{pmatrix}&&b_g^{22}\\&\frac{b_g^{11}+b_g^{22}}{-2}\\b_g^{11}\end{pmatrix},
		&
		\star_{g,3}&=\frac{\sqrt{b_g^{11}b_g^{22}}}{\sqrt{a_g}}\begin{pmatrix}&&\frac1{b_g^{11}}\\&\frac{-2}{b_g^{11}+b_g^{22}}\\\frac1{b_g^{22}}\end{pmatrix},
		\\
		\star_{g,4}&=\frac{\sqrt{b_g^{11}b_g^{22}}}{\sqrt{a_g^3}}\begin{pmatrix}&1\\-1\end{pmatrix},
		&
		\star_{g,5}&=\frac{\sqrt{b_g^{11}b_g^{22}}}{\sqrt{a_g^3}}\begin{pmatrix}1\end{pmatrix}.
	\end{align*}

\section{The Rumin complex in the scalar representations}\label{S:scalar}

	In this section we will prove Theorem~\ref{T:dets.scalar}.
	Fix $0\neq\alpha\in\goe_1^*$ and consider the corresponding nontrivial unitary representation $\rho_\alpha$ of $G$ on $\C$.
	We will mostly omit the symbol $\rho_\alpha$ in our notation throughout this section.
	All operators are understood to act in this representation.

	Let $X_1,\dotsc,X_5$ denote a graded basis of $\goe$ such that \eqref{E:basis.Xi} holds.
	In the representation $\rho_\alpha$ we have
	\[
		X_1=\mathbf iv,\qquad X_2=\mathbf iw,\qquad X_3=X_4=X_5=0,
	\]
	where $v:=\alpha(X_1)$ and $w:=\alpha(X_2)$, see \eqref{E:rep.scalar}. 
	Note that $(v,w)\neq(0,0)$ as $\rho_\alpha$ is assumed to be nontrivial.

	From \eqref{E:D0}--\eqref{E:D4} we see that with respect to the basis of $H^q(\goe)$ provided by the forms in the right column in \eqref{E:bases}, the Rumin differentials in the scalar representations are given by the matrices:
	\begin{align}
		D_0
		&=\mathbf i\begin{pmatrix}v\\w\end{pmatrix},
		\label{E:D0.scalar}
		\\
		D_1
		&=\mathbf i^3\begin{pmatrix}-v^2w&v^3\\-\sqrt2vw^2&\sqrt2v^2w\\-w^3&vw^2\end{pmatrix}
		=\mathbf i^3\begin{pmatrix}v^2\\\sqrt2vw\\w^2\end{pmatrix}\begin{pmatrix}-w&v\end{pmatrix},
		\label{E:D1.scalar}
		\\
		D_2
		&=\mathbf i^2\begin{pmatrix}-vw&v^2/\sqrt2&0\\-w^2/\sqrt2&0&v^2/\sqrt2\\0&-w^2/\sqrt2&vw\end{pmatrix},
		\notag
		\\
		&=\mathbf i^2\begin{pmatrix}-v&0\\-w/\sqrt2&v/\sqrt2\\0&w\end{pmatrix}\begin{pmatrix}w&-v/\sqrt2&0\\0&-w/\sqrt2&v\end{pmatrix},
		\label{E:D2.scalar}
		\\
		D_3
		&=\mathbf i^3\begin{pmatrix}vw^2&-\sqrt2v^2w&v^3\\w^3&-\sqrt2vw^2&v^2w\end{pmatrix}
		=\mathbf i^3\begin{pmatrix}v\\w\end{pmatrix}\begin{pmatrix}w^2&-\sqrt2vw&v^2\end{pmatrix},
		\label{E:D3.scalar}
		\\
		D_4
		&=\mathbf i\begin{pmatrix}-w&v\end{pmatrix}.
		\label{E:D4.scalar}
	\end{align}

	Let $g$ be a graded Euclidean inner product on $\goe$.
	In view of \eqref{E:Aut}, \eqref{E:Aut.det}, \eqref{E:Aut.hg}, \eqref{E:Aut.ag.bg}, \eqref{E:Aut.norms}, \eqref{E:Aut.tr}, and \eqref{E:Aut.rho.alpha}, we may assume w.l.o.g.\ that $X_1,X_2$ is an orthonormal basis of $\goe_{-1}$ with respect to $g_{-1}$.
	By the principal axis theorem, we may, moreover, assume $b(X_1,X_2)=0$.
	Hence, with respect to the basis of $H^q(\goe)$ provided the forms in the right column of \eqref{E:bases}, the Hermitian inner products on $H^q(\goe)$ are represented by the matrices in \eqref{E:hq}.
	To keep the notation light, we drop the subscript $g$ and write $a=a_g$, $b=b_g$, $b^{ij}=b^{ij}_g$, $h=h_g$, and $h_q=h_{g,q}$.
	Note that
	\begin{equation}\label{E:norms}
		\|\alpha\|_g=\bigl(v^2+w^2\bigr)^{1/2},
		\qquad
		\|\alpha\|_b=\bigl(b^{11}v^2+b^{22}w^2\bigr)^{1/2}.
	\end{equation}
	and
	\begin{equation}\label{E:trbg}
		\tr(b^{-1}g_{-1})=b^{11}+b^{22}.
	\end{equation}
	As $D_q^{*_h}=h_q^{-1}D_q^*h_{q+1}$, we have from \eqref{E:|D|}
	\begin{equation}\label{E:det.basic}
		\dets|D_q|_h
		=\dets^{1/2}\bigl(h_q^{-1}D_q^*h_{q+1}D_q\bigr).
	\end{equation}
	Since the Rumin complex in the scalar representation is finite dimensional, $\dets$ here is just the product of all nonzero eigenvalues.
	Clearly, 
	\begin{equation}\label{E:specAB}
		\spec_*(AB)=\spec_*(BA)
	\end{equation} 
	for any two linear maps $A\colon V\to W$ and $B\colon W\to V$ between finite dimensional (complex) vector spaces, where $\spec_*$ denotes the nonzero spectrum.

	Combining \eqref{E:D0.scalar}, \eqref{E:det.basic}, \eqref{E:hq}, and \eqref{E:norms}, we obtain
	\[
		\dets|D_0|_h
		={\det}^{1/2}\bigl(D_0^*D_0\bigr)
		=(v^2+w^2)^{1/2}
		=\|\alpha\|_g.
	\]


	Using the factorization for $D_1$ provided in \eqref{E:D1.scalar}, as well as \eqref{E:hq}, \eqref{E:specAB}, and \eqref{E:norms}, we see that $D_1^{*_h}D_1=D_1^*h_2D_1$ has the same nonzero spectrum as
	\begin{multline*}
		\frac1a\cdot
		\begin{pmatrix}-w&v\end{pmatrix}\begin{pmatrix}-w&v\end{pmatrix}^*
		\cdot
		\begin{pmatrix}v^2\\\sqrt2vw\\w^2\end{pmatrix}^*
		\begin{pmatrix}b^{11}\\&\frac{b^{11}+b^{22}}2\\&&b^{22}\end{pmatrix}
		\begin{pmatrix}v^2\\\sqrt2vw\\w^2\end{pmatrix}
		\\
		=\frac{\bigl(v^2+w^2\bigr)^2\bigl(b^{11}v^2+b^{22}w^2\bigr)}a
		=\frac{\|\alpha\|_g^4\|\alpha\|_b^2}a.
	\end{multline*}
	In view of \eqref{E:det.basic} this gives $\dets|D_1|_h=\frac{\|\alpha\|_g^2\|\alpha\|_b}{\sqrt a}$.


	The factorization of $D_2$ in \eqref{E:D2.scalar}, together with \eqref{E:hq}, \eqref{E:specAB}, \eqref{E:norms}, and \eqref{E:trbg} show that $D_2^{*_h}D_2=h_2^{-1}D_2^*h_3D_2$ has the same nonzero spectrum as
	\begin{align*}
		\frac{b^{11}b^{22}}a&
		\cdot\begin{pmatrix}-v&0\\\frac{-w}{\sqrt2}&\frac v{\sqrt2}\\0&w\end{pmatrix}^*\begin{pmatrix}\frac1{b^{22}}\\&\frac2{b^{11}+b^{22}}\\&&\frac1{b^{11}}\end{pmatrix}\begin{pmatrix}-v&0\\\frac{-w}{\sqrt2}&\frac v{\sqrt2}\\0&w\end{pmatrix}
		\\
		&\qquad\qquad\cdot\begin{pmatrix}w&\frac{-v}{\sqrt2}&0\\0&\frac{-w}{\sqrt2}&v\end{pmatrix}\begin{pmatrix}\frac1{b^{11}}\\&\frac2{b^{11}+b^{22}}\\&&\frac1{b^{22}}\end{pmatrix}\begin{pmatrix}w&\frac{-v}{\sqrt2}&0\\0&\frac{-w}{\sqrt2}&v\end{pmatrix}^*
		\\
		&=\frac{b^{11}b^{22}}a
		\cdot\begin{pmatrix}\frac{v^2}{b^{22}}+\frac{w^2}{b^{11}+b^{22}}&-\frac{vw}{b^{11}+b^{22}}\\-\frac{vw}{b^{11}+b^{22}}&\frac{w^2}{b^{11}}+\frac{v^2}{b^{11}+b^{22}}\end{pmatrix}
		\cdot\begin{pmatrix}\frac{w^2}{b^{11}}+\frac{v^2}{b^{11}+b^{22}}&\frac{vw}{b^{11}+b^{22}}\\\frac{vw}{b^{11}+b^{22}}&\frac{v^2}{b^{22}}+\frac{w^2}{b^{11}+b^{22}}\end{pmatrix}
		\\
		&=\frac{\bigl(v^2+w^2\bigr)\bigl(b^{11}v^2+b^{22}w^2\bigr)}{a(b^{11}+b^{22})}\begin{pmatrix}1&0\\0&1\end{pmatrix}
		\\&=\frac{\|\alpha\|_g^2\|\alpha\|_b^2}{a\cdot\tr(b^{-1}g_{-1})}\begin{pmatrix}1&0\\0&1\end{pmatrix}.
	\end{align*}
	In view of \eqref{E:det.basic} this gives $\dets|D_2|_h=\frac{\|\alpha\|_g^2\|\alpha\|_b^2}{a\cdot\tr(b^{-1}g_{-1})}$.

	Combining these formulas for $\dets|D_q|_h$, $q=0,1,2$ with \eqref{E:Hodge.det} and \eqref{E:tau.D.def}, we complete the proof of Theorem~\ref{T:dets.scalar}.

	For the entire spectra of $|D_q|_h$ in the scalar representation $\rho_\alpha$ we obtain:
	\begin{align*}
		\spec\bigl(|D_0|_h\bigr):&\qquad\|\alpha\|_g\\
		\spec\bigl(|D_1|_h\bigr):&\qquad0,\tfrac{\|\alpha\|_g^2\|\alpha\|_b}{\sqrt a}\\
		\spec\bigl(|D_2|_h\bigr):&\qquad0,\tfrac{\|\alpha\|_g\|\alpha\|_b}{\sqrt{a\cdot\tr(b^{-1}g_{-1})}},\tfrac{\|\alpha\|_g\|\alpha\|_b}{\sqrt{a\cdot\tr(b^{-1}g_{-1})}}\\
		\spec\bigl(|D_3|_h\bigr):&\qquad0,0,\tfrac{\|\alpha\|_g^2\|\alpha\|_b}{\sqrt a}\\
		\spec\bigl(|D_4|_h\bigr):&\qquad0,\|\alpha\|_g
	\end{align*}

\section{The Rumin complex in the Schr\"odinger representations}\label{S:Schroedinger}

	In this section we will prove Theorem~\ref{T:dets.Schroedinger}.
	Fix a real number $\hbar\neq0$ and consider the Schr\"odinger representation $\rho_\hbar$ of $G$ on $L^2(\R,d\theta)$, see \eqref{E:Schroedinger}.
	We will mostly drop the symbol $\rho_\hbar$ in our notation throughout this section.
	All operators are considered as acting in this representation.

	Let $X_1,\dotsc,X_5$ denote a basis as in \eqref{E:basis.Xi}.
	In the Schr\"odinger representation
	\begin{equation}\label{E:X3.Schroedinger}
		X_3=[X_1,X_2]=\mathbf i\hbar
	\end{equation} 
	is central and we have 
	\begin{equation}\label{E:X45.Schroedinger}
		X_4=[X_1,X_3]=0,\qquad
		X_5=[X_2,X_3]=0.
	\end{equation}
	Hence, from \eqref{E:D0}--\eqref{E:D4} we see that with respect to the bases of $H^q(\goe)$ provided by the forms in the right column of \eqref{E:bases}, the Rumin differentials are represented by the matrices:
	\begin{align}
		D_0&=\begin{pmatrix}X_1\\ X_2\end{pmatrix},\label{E:D0.Schroedinger}
		\\
		D_1&=\begin{pmatrix}-X_{112}-X_{13}&X_{111}\\-\sqrt2X_{122}&\sqrt2X_{211}\\-X_{222}&X_{221}-X_{23}\end{pmatrix},\label{E:D1.Schroedinger}\\
		D_2&=\begin{pmatrix}-X_{12}-X_3&X_{11}/\sqrt2&0\\-X_{22}/\sqrt2&-\tfrac32X_3&X_{11}/\sqrt2\\0&-X_{22}/\sqrt2&X_{21}-X_3\end{pmatrix},\label{E:D2.Schroedinger}\\
		D_3&=\begin{pmatrix}X_{122}+X_{23}&-\sqrt2X_{112}&X_{111}\\X_{222}&-\sqrt2X_{221}&X_{211}-X_{13}\end{pmatrix},\label{E:D3.Schroedinger}\\
		D_4&=\begin{pmatrix}-X_2,X_1\end{pmatrix}\label{E:D4.Schroedinger}.
	\end{align}
	
	Let $g$ be a graded Euclidean inner product on $\goe$ with $b_g$ proportional to $g$.
	For the time being, we will assume $a_g=1$ and $b_g=g$, cf.~\eqref{E:ag} and \eqref{E:bg}.
	In view of \eqref{E:Aut}, \eqref{E:Aut.det}, \eqref{E:Aut.hg}, \eqref{E:Aut.ag.bg}, and \eqref{E:Aut.rho.h}, we may assume w.l.o.g.\ that $X_1,X_2$ is an orthonormal basis of $\goe_{-1}$.
	Then the Hermitian inner products $h_{g,q}$ on $H^q(\goe)$ are represented by the identical matrices with respect to the basis provided by the forms in the right column of \eqref{E:bases}, cf.~\eqref{E:hq}.
	Hence, the adjoint of a matrix of operators with respect to $h_g$ is represented by the transposed conjugate matrix which will be denoted using the undecorated symbol $*$.
	By unitarity, 
	\begin{equation}\label{E:Xjs}
		X_j^*=-X_j.
	\end{equation}

\subsection{The spectrum of $D_0^*D_0$}

	Let us introduce the notation
	\begin{equation}\label{E:Delta}
		\Delta^*
		=\Delta
		:=D_0^*D_0
		=X_1^*X_1+X_2^*X_2
		=-X_{11}-X_{22}.
	\end{equation}
	In the Schr\"odinger representation $\rho_\hbar$, see~\eqref{E:Schroedinger}, this operator becomes the classical quantum harmonic oscillator 
	\[
		\Delta
		=D_0^*D_0
		=|\hbar|\bigl(\theta^2-\partial_\theta^2\bigr)
		=|\hbar|\bigl(1+(\theta+\partial_\theta)^*(\theta+\partial_\theta)\bigr)
	\]
	whose spectrum is well known,
	\begin{equation}\label{E:spec0.Schroedinger}
		\spec(\Delta)=\spec(D_0^*D_0):\quad|\hbar|(2n+1),\quad n=0,1,2,3,\dotsc
	\end{equation}
	All eigenvalues have multiplicity one.

\subsection{The spectrum of $D_1^*D_1$}\label{SS:spec1.Schroedinger}

	In this section we will compute the spectrum of $D_1^*D_1$ by means of the factorization
	\begin{equation}\label{E:D1=ZA}
		D_1=ZA
	\end{equation}
	where
	\begin{equation}\label{E:Z}
		Z
		=\begin{pmatrix}0&-X_1\\X_1/\sqrt2&-X_2/\sqrt2\\X_2&0\end{pmatrix}
	\end{equation}
	and
	\begin{equation}\label{E:A}
		A^*
		=A
		:=\begin{pmatrix}-X_{22}&X_{21}-X_3\\X_{12}+X_3&-X_{11}\end{pmatrix}.
	\end{equation}
	It will be convenient to use the notation
	\begin{equation}\label{E:J}
		-J^{-1}
		=-J^*
		=J
		:=\begin{pmatrix}0&1\\-1&0\end{pmatrix}.
	\end{equation}
	Using \eqref{E:X3.Schroedinger}, \eqref{E:X45.Schroedinger}, \eqref{E:D0.Schroedinger}, \eqref{E:D1.Schroedinger}, and \eqref{E:Xjs}, one readily checks \eqref{E:D1=ZA} as well as the following relations:
	\begin{align}
		A
		&=(JD_0)(JD_0)^*-JX_3,
		\label{E:AJD0}
		\\
		AD_0
		&=0,
		\label{E:AD0}
		\\
		Z^*Z
		&=A+\tfrac12D_0D_0^*+2JX_3,
		\label{E:Z*Z}
		\\
		D_0^*JD_0
		&=-X_3,
		\label{E:D0sJD0}
		\\
		D_0D_0^*+JD_0D_0^*J^*
		&=\begin{pmatrix}\Delta&-X_3\\X_3&\Delta\end{pmatrix}.
		\label{E:D0Delta}
	\end{align}

	From \eqref{E:AJD0} and \eqref{E:J} we also obtain
	\begin{equation}\label{E:D0D0s}
		D_0D_0^*
		=J^*AJ+JX_3,
	\end{equation}
	and then 
	\begin{equation}\label{E:AJA}
		AJA
		=AX_3.
	\end{equation}
	using \eqref{E:AD0}.
	Combining \eqref{E:D1=ZA}, \eqref{E:Z*Z}, \eqref{E:AD0}, \eqref{E:AJA}, $A^*=A$, and using the fact that $X_3$ is central, we obtain
	\begin{align}\label{E:D1*D1}
		D_1^*D_1=A(A^2+2X_{33}).
	\end{align}

	The symbol $\spec_*(P):=\spec(P)\setminus\{0\}$ will be used to denote the nonzero part of the spectrum of an operator $P$.

	\begin{lemma}\label{L:specA2}
		We have $\spec_*(A^2)=\spec(\Delta^2)$, including multiplicities.
	\end{lemma}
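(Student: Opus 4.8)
The plan is to exploit the factorization $D_1 = ZA$ together with the relation $AD_0 = 0$ from \eqref{E:AD0} in order to transfer spectral information between $A$ and the harmonic oscillator $\Delta = D_0^*D_0$. First I would observe that $A$ is self-adjoint, so $A^2$ is a positive operator, and its nonzero spectrum is what we need to pin down. The key structural fact is \eqref{E:AJD0}, which expresses $A = (JD_0)(JD_0)^* - JX_3$; since $X_3 = \mathbf i\hbar$ is a central scalar in the Schr\"odinger representation, and $J$ is the orthogonal complex structure from \eqref{E:J}, this says $A$ is a rank-one-type perturbation (by a constant) of the positive operator $(JD_0)(JD_0)^*$, whose nonzero spectrum agrees with that of $D_0^*D_0 = \Delta$ by the general principle $\spec_*(PP^*) = \spec_*(P^*P)$.

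Concretely, I would proceed as follows. On $\ker A$, the operator $A^2$ contributes only the eigenvalue $0$, so it suffices to analyze $A^2$ on $(\ker A)^\perp = \overline{\img A}$. Using \eqref{E:AD0} we have $\img D_0 \subseteq \ker A$, so $D_0$ maps into the kernel; conversely I would want to identify $\overline{\img A}$ with the orthogonal complement of $\img D_0$ inside $\mathcal H_\infty \otimes H^1(\goe)$, using exactness of the Rumin complex \eqref{E:Rumin.rho} at the relevant spot (i.e., $\ker D_1 = \img D_0$ together with the self-adjointness of $A$ and the relation $D_1 = ZA$, which forces $\ker A \subseteq \ker D_1 = \img D_0$, hence $\ker A = \img D_0$). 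This pins down the complement precisely. Then on that complement, the relation \eqref{E:AJD0} together with $AD_0 = 0$ (equivalently $D_0^*(JD_0)^*$-type identities and $D_0^*JD_0 = -X_3$ from \eqref{E:D0sJD0}) should let me diagonalize $A$ simultaneously with $\Delta$: the harmonic oscillator $\Delta$ has simple spectrum $|\hbar|(2n+1)$ by \eqref{E:spec0.Schroedinger}, and $JD_0$ intertwines the two copies of $L^2(\R)$ in $\mathcal H \otimes H^1$, so $A$ restricted to the complement of $\ker A$ acts, in the eigenbasis of $\Delta$, with eigenvalues that are $\pm$ square roots matching $\Delta$; squaring, $A^2$ has precisely the eigenvalues $|\hbar|^2(2n+1)^2$, i.e., $\spec_*(A^2) = \spec(\Delta^2)$ with multiplicities.

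The main obstacle I anticipate is the bookkeeping of multiplicities and the precise identification of $\ker A$. Because $H^1(\goe)$ is two-dimensional while $\Delta$ acts on a single $L^2(\R)$, the operator $A$ acts on $L^2(\R)\otimes\C^2$, and one must check that after removing $\ker A = \img D_0$ (a single copy's worth, matching the image of $D_0$), the residual action of $A$ on the remaining copy is, via the isometry induced by $JD_0/|JD_0|$ (polar decomposition), unitarily conjugate to a function of $\Delta$ with no eigenvalue collisions beyond those already present in $\spec(\Delta)$. The relation \eqref{E:AJA}, $AJA = AX_3$, is the clean algebraic input I would lean on here: on $(\ker A)^\perp$ it says $JA = X_3 A^{-1}$ in a suitable sense, equivalently $A J A = \mathbf i\hbar\, A$, which constrains $A$ to interchange the two spectral halves coming from $J$ and forces the squared spectrum to match $\Delta^2$ exactly. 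I would finish by noting the multiplicities: each eigenvalue $|\hbar|(2n+1)$ of $\Delta$ is simple, so each $|\hbar|^2(2n+1)^2$ appears in $\spec_*(A^2)$ with the same multiplicity one, giving the claimed equality including multiplicities. The whole argument is "morally" the statement that $A$ is a square root of $\Delta$ twisted by $J$ on the orthocomplement of the kernel — making that precise is the real content, and \eqref{E:AJD0}, \eqref{E:AD0}, \eqref{E:D0sJD0}, \eqref{E:AJA} are exactly the identities that do it.
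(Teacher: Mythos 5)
Your overall strategy is the same as the paper's (kill $\cimg D_0=\ker A$ using exactness, then intertwine $A$ on the complement with $\Delta$ via $JD_0$), but as written it has a genuine gap at exactly the delicate point. The map you propose to conjugate with, the (projected) operator $JD_0$ -- equivalently the paper's $B=AJD_0$, which satisfies $AB=B\Delta$ by \eqref{E:B2} -- is \emph{not} injective as a map into $\ker D_0^*$: one computes $B^*B=\Delta(\Delta+\ii X_3)(\Delta-\ii X_3)$, so its kernel is precisely the ground-state line of the harmonic oscillator (concretely, $JD_0\psi_0\in\img D_0$ for the ground state $\psi_0$). Hence the intertwining only gives $\spec(A|_{\cimg B})=\spec(\Delta|_{\ker^\perp B})$, which misses the lowest eigenvalue $|\hbar|$ of $\Delta$, and $\cimg B$ is not all of $\ker D_0^*$: there is a complementary line $\ker D_0^*\cap\ker B^*$. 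Your proposal never accounts for this line, so you cannot conclude equality of $\spec_*(A^2)$ and $\spec(\Delta^2)$ \emph{including multiplicities}; a priori the eigenvalue $\hbar^2$ could be missing or duplicated. The paper's proof spends most of its effort exactly here: it identifies $\ker D_0^*\cap\ker B^*$ via \eqref{E:kerD0B}--\eqref{E:kerrr}, shows that on it $A=-JX_3$ acts by $-|\hbar|$ (a \emph{negative} eigenvalue of $A$, matched to $\Delta|_{\ker B}$ only after squaring), and thereby recovers the missing eigenvalue.

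Two smaller inaccuracies feed into this gap. First, $A=(JD_0)(JD_0)^*-JX_3$ is not a perturbation ``by a constant'' in any useful spectral sense: $JX_3=\ii\hbar J$ is a constant matrix that does \emph{not} commute with $(JD_0)(JD_0)^*=JD_0D_0^*J^*$ (this is the content of \eqref{E:D0Delta}), so you cannot read off $\spec(A)$ from $\spec_*(PP^*)=\spec_*(P^*P)$ plus a shift. Second, the identity $AJA=AX_3$ only says $AJ=X_3$ on $\img A$; it does not by itself ``force the squared spectrum to match $\Delta^2$'' -- it is consistent with the ground-state eigenvalue being absent. To repair the argument you would need essentially the paper's extra step: compute $B^*B$ to locate $\ker B$, identify the one-dimensional space $\ker D_0^*\cap\ker B^*$, and evaluate $A$ there, which is where the sign flip and the restored multiplicity come from.
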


	\begin{proof}
		We consider the operator 
		\begin{equation}\label{E:B}B:=AJD_0\end{equation}
		which satisfies the relations
		\begin{equation}\label{E:AB=BD}
			D_0^*B=0,\qquad AB=B\Delta.
		\end{equation}
		The first equation immediately follows from \eqref{E:AD0}.
		To see the second equation, we multiply \eqref{E:D0D0s} with $AJ$ from the left and with $D_0$ from the right, and use \eqref{E:Delta}, \eqref{E:B}, and the fact that $X_3$ is central.
		
		In view of \eqref{E:AD0} and \eqref{E:AB=BD}, the decomposition 
		\[
			L^2(\R)=\ker B\oplus\ker^\perp B
		\]
		is invariant under $\Delta=\Delta^*$, the decomposition 
		\[
			L^2(\R)\otimes\C^2=\cimg D_0\oplus\ker D_0^*=\cimg D_0\oplus\cimg B\oplus(\ker D_0^*\cap\ker B^*)
		\] 
		is invariant under $A=A^*$, and we have:
		\begin{align}
			\spec(\Delta)		&=\spec(\Delta|_{\ker B})\sqcup\spec(\Delta|_{\ker^\perp B}),\label{E:spec1}\\
			\spec_*(A)		&=\spec_*(A|_{\ker D_0^*}),\\
			\spec(A|_{\ker D_0^*})	&=\spec(A|_{\img B})\sqcup\spec(A|_{\ker D_0^*\cap\ker B^*}),\label{E:spec3}\\
			\spec(A|_{\img B})	&=\spec(\Delta|_{\ker^\perp B}).\label{E:spec4}
		\end{align}
		Hence, it remains to show 
		\begin{equation}\label{E:spec5}
			\spec(-A|_{\ker D_0^*\cap\ker B^*})=\spec(\Delta|_{\ker B}).
		\end{equation}
		Indeed, combining \eqref{E:spec1}--\eqref{E:spec5}, we obtain \[\spec_*(A^2)=\spec_*(\Delta^2)=\spec(\Delta^2),\] for $\Delta$ is strictly positive.

		Let us now show \eqref{E:spec5}. 
		Using \eqref{E:Delta}, \eqref{E:D0D0s}, \eqref{E:J}, and \eqref{E:B}, we obtain
		\begin{equation}\label{E:B2}
			B=JD_0\Delta+D_0X_3.
		\end{equation}
		Combining this with \eqref{E:Delta}, \eqref{E:J}, and \eqref{E:D0sJD0}, a straight forward computations yields
		\begin{equation}\label{E:B*B}
			B^*B=\Delta(\Delta^2+X_{33})=\Delta(\Delta+\ii X_3)(\Delta-\ii X_3).
		\end{equation}
		As $\Delta>0$, we obtain
		\begin{equation}\label{E:kerB}
			\ker B
			=\ker(\Delta+\ii X_3)(\Delta-\ii X_3)
			=\begin{cases}
				\ker(\Delta+\ii X_3)&\text{if $\ii X_3<0$, and}
				\\
				\ker(\Delta-\ii X_3)&\text{if $\ii X_3>0$.}
			\end{cases}
		\end{equation}
		Recall from \eqref{E:X3.Schroedinger} that $\mathbf iX_3=-\hbar$.
		In view of \eqref{E:B2} and \eqref{E:D0Delta} we have \begin{equation}\label{E:kerD0B}\ker D_0^*\cap\ker B^*=\ker D_0^*\cap\ker D_0^*J^*=\ker\begin{pmatrix}\Delta&-X_3\\X_3&\Delta\end{pmatrix}.\end{equation}
		Using 
		\[
			\begin{pmatrix}1&1\\-\ii&\ii\end{pmatrix}^{-1}
			\begin{pmatrix}\Delta&-X_3\\X_3&\Delta\end{pmatrix}
			\begin{pmatrix}1&1\\-\ii&\ii\end{pmatrix}
			=\begin{pmatrix}\Delta+\ii X_3&0\\0&\Delta-\ii X_3\end{pmatrix}
		\]
		we obtain from \eqref{E:kerD0B} an isomorphism given by conjugation
		\begin{equation}\label{E:kerrr}
			\ker D_0^*\cap\ker B^*
			\cong\begin{cases}
				\ker(\Delta+\ii X_3)&\text{if $\ii X_3<0$, and}
				\\
				\ker(\Delta-\ii X_3)&\text{if $\ii X_3>0$.}
			\end{cases}
		\end{equation}
		From \eqref{E:AJD0} we see that 
		\[
			-A=JX_3=\begin{pmatrix}\Delta&0\\0&\Delta\end{pmatrix}\qquad\text{on the subspace \eqref{E:kerD0B}.}
		\]
		Moreover, the the isomorphism in \eqref{E:kerrr} intertwines this operator with $\Delta$ acting on $\ker B$, cf.~\eqref{E:kerB}.
		This completes the proof of \eqref{E:spec5}, whence the lemma.
	\end{proof}

	Combining Lemma~\ref{L:specA2} with \eqref{E:D1*D1}, \eqref{E:spec0.Schroedinger}, and \eqref{E:X3.Schroedinger}, we obtain the (simple) spectrum of $D_1^*D_1$ in the Schr\"o\-din\-ger representation $\rho_\hbar$, 
	\begin{equation}\label{E:spec1.Schroedinger}
		\spec_*(D_1^*D_1):\quad|\hbar|^3(2n+1)\left|(2n+1)^2-2\right|,\quad n=0,1,2,3,\dotsc
	\end{equation}

\subsection{The spectrum of $D_2^*D_2$}

	One readily checks the relations
	\begin{align}
		D_2&=ZJZ^*\star-X_3,\label{E:D2.ZJZs}
		\\
		Z^*\star Z&=-\tfrac12D_0D_0^*-JX_3,\label{E:Z*sZ}
		\\
		\star ZJA&=-ZJA\label{E:sZJA},
	\end{align}
	where
	\begin{equation}\label{E:star}
		\star^{-1}
		=\star^*
		=\star
		:=\begin{pmatrix}0&0&1\\0&-1&0\\1&0&0\end{pmatrix},
	\end{equation}
	and $Z$, $A$, $J$ are the operators considered in Section~\ref{SS:spec1.Schroedinger}, see \eqref{E:Z}, \eqref{E:A}, and \eqref{E:J}.

	Combining \eqref{E:Z*Z} and \eqref{E:D0D0s}, we also get
	\begin{equation}\label{E:J*Z*ZJ}
		J^*Z^*ZJ=\tfrac12A+D_0D_0^*+\tfrac32JX_3.
	\end{equation}
	From \eqref{E:D2.ZJZs} we obtain
	\begin{equation}\label{E:D2*D2}
		(\star D_2)^*=-(\star D_2),\qquad 
		D_2^*=-\star D_2\star,\qquad 
		D_2^*D_2=-(\star D_2)^2.
	\end{equation}

	The statement of the subsequent lemma is analogous to a result for contact 3-manifolds stated in \cite[Eq.~(63)]{BHR07}, see also \cite[Section~4.2]{RS12}.

	\begin{lemma}\label{L:specsD2}
		$\spec_*(\star D_2)=\spec(G)$, including multiplicities, where
		\begin{equation}\label{E:G}
			G:=\begin{pmatrix}-2X_3&\Delta/\sqrt2\\-\Delta/\sqrt2&-\tfrac12X_3\end{pmatrix}.
		\end{equation}
	\end{lemma}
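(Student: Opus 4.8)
\textbf{Proof proposal for Lemma~\ref{L:specsD2}.}
The plan is to mimic the strategy used in the proof of Lemma~\ref{L:specA2}: build an intertwining operator that identifies the nonzero part of the spectrum of $\star D_2$ with the spectrum of the $2\times2$ first-order matrix operator $G$, by decomposing $L^2(\R)\otimes\C^3$ into pieces adapted to $\img Z$, $\img(ZJA)$, and the remaining kernel, each invariant under the skew-adjoint operator $\star D_2$. First I would record that $\star D_2$ is skew-adjoint (from \eqref{E:D2*D2}), so it suffices to understand $(\star D_2)^2=-D_2^*D_2$ up to the zero eigenvalue. Using the factorization $D_2=ZJZ^\star\star-X_3$ from \eqref{E:D2.ZJZs} together with \eqref{E:Z*sZ}, \eqref{E:Z*Z}, \eqref{E:D0D0s}, and the centrality of $X_3$, I would compute $(\star D_2)^2$ and, more importantly, restrict attention to the subspace $\cimg Z\subseteq L^2(\R)\otimes\C^3$, which (by $D_1=ZA$ and $D_2D_1=0$) contains $\img(\star D_2)$ after a twist by $\star$; on the orthogonal complement $\star D_2$ should act with at worst zero eigenvalue, using $Z^\star\star Z=-\tfrac12 D_0D_0^\star-JX_3$ from \eqref{E:Z*sZ}.

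The key computational step is to transport $\star D_2$ through the partial isometry built from $Z$ and land on the space $L^2(\R)\otimes\C^2$ where $A$, $D_0D_0^\star$, $\Delta$, and $X_3$ live. Concretely, I expect that conjugating $\star D_2$ by (a suitable normalization of) $Z^\star\star$ converts it, on the relevant invariant subspace, into an operator expressible through $A$, $D_0D_0^\star$, $J$, and $X_3$ via the relations $J^\star Z^\star ZJ=\tfrac12 A+D_0D_0^\star+\tfrac32 JX_3$ \eqref{E:J*Z*ZJ} and $\star ZJA=-ZJA$ \eqref{E:sZJA}. The identity $\star ZJA=-ZJA$ pins down the $-1$-eigenspace of $\star$ inside $\img Z$ as $\img(ZJA)$, and on its orthogonal complement within $\img Z$ the operator $\star$ acts as $+1$; decomposing accordingly should split the problem into a piece isospectral to $\Delta$ acting on $\ker^\perp B$ (mirroring \eqref{E:spec4}) and a piece isospectral to $-A$ on a kernel intersection (mirroring \eqref{E:spec5}), both of which were already identified with shifted harmonic-oscillator spectra in Lemma~\ref{L:specA2}. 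Packaging these two one-dimensional-per-level spectra together, together with the off-diagonal $\pm\Delta/\sqrt2$ coupling and the diagonal $X_3$-terms coming from the $-X_3$ summand in $D_2$ and the $JX_3$-terms in \eqref{E:Z*sZ} and \eqref{E:J*Z*ZJ}, should reproduce exactly the $2\times2$ operator $G$ of \eqref{E:G}; the eigenvalues of $G$ on the $n$-th oscillator level are the roots of $t^2+\tfrac52 X_3 t+X_3^2+\Delta^2/2$ evaluated at $\Delta=|\hbar|(2n+1)$, $X_3=\ii\hbar$, which is the desired conclusion.

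The main obstacle I anticipate is the bookkeeping of the kernel: $\star D_2$ has an infinite-dimensional kernel coming both from $\img D_1$ (since $D_2D_1=0$) and from the part of $\cimg D_0^\star$ not hit by the relevant images, and one must check that no nonzero eigenvalue is lost or double-counted when passing between $L^2(\R)\otimes\C^3$ and $L^2(\R)\otimes\C^2$. This is exactly the subtle point that \eqref{E:spec1}--\eqref{E:spec5} handled in the proof of Lemma~\ref{L:specA2}, and here it is compounded by the presence of the Hodge star $\star$, which mixes the three $\C^3$-components; the relations \eqref{E:Z*sZ} and \eqref{E:sZJA} are precisely what is needed to control this, so the argument should go through, but verifying the multiplicities (that each oscillator level $n$ contributes exactly the two eigenvalues of $G$, with multiplicity one each) will require care. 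A secondary, purely algebraic obstacle is verifying the three stated relations \eqref{E:D2.ZJZs}, \eqref{E:Z*sZ}, \eqref{E:sZJA}; these follow by direct computation from \eqref{E:D0.Schroedinger}--\eqref{E:D2.Schroedinger}, \eqref{E:X3.Schroedinger}, \eqref{E:X45.Schroedinger}, \eqref{E:Xjs}, and the definitions of $Z$, $A$, $J$, $\star$, but they are the kind of matrix-of-noncommuting-operators identities that must be checked entry by entry.
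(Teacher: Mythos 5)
Your overall architecture is the right guess (find an operator intertwining $\star D_2$ with a $2\times2$ operator in $\Delta,X_3$, then do careful kernel bookkeeping as in Lemma~\ref{L:specA2}), but the concrete mechanism you propose does not work, and the two genuinely hard ingredients of the proof are missing. First, the reduction to $\cimg Z$ is structurally wrong: since $D_1=ZA$, one has $\ker D_1^*\supseteq(\img Z)^\perp$, and because of the $-X_3$ summand in $D_2=ZJZ^*\star-X_3$ the operator $\star D_2$ neither maps into (a $\star$-twist of) $\cimg Z$ nor acts with "at worst zero eigenvalue" on its complement; the nonzero spectrum of $\star D_2$ lives on all of $\ker D_1^*$, which strictly contains $(\img Z)^\perp$. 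Likewise, your claim that $\star$ acts as $+1$ on the orthocomplement of $\img(ZJA)$ inside $\img Z$ is false: the $-1$-eigenspace of $\star$ is two-dimensional per fiber, so \eqref{E:sZJA} only says $\img(ZJA)\subseteq\Omega^2_-$, not that it exhausts the $-1$-part. Second, conjugating by (a normalization of) $Z^*\star$ cannot produce the intertwiner: applying $\star D_2$ to $\star ZA$ generates $ZA$- and $ZJA$-terms (cf.\ \eqref{E:sD2sZA}, \eqref{E:sD2ZJA}), so the system does not close at that level, and one moreover needs the intertwiner to land in $\ker D_1^*$ to avoid the kernel coming from $\img D_1$. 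The paper resolves this by constructing the specific cubic combinations $E$ and $F$ in \eqref{E:E}--\eqref{E:F}, verifying $D_1^*E=0=D_1^*F$, and then setting $C=(EB,\sqrt2FB)$ with $B=AJD_0$, which satisfies $D_1^*C=0$ and the exact intertwining relation $\star D_2C=CG$ of \eqref{E:sD2C}; the composition with $B$ is what converts the $A$-spectrum into the $\Delta$-spectrum appearing in $G$. Nothing in your sketch produces these operators or the specific entries $-2X_3$, $-\tfrac12X_3$, $\pm\Delta/\sqrt2$ of $G$; "should reproduce exactly $G$" is precisely the step that has to be computed.

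The second missing ingredient is the residual comparison, the analogue of \eqref{E:spec5}: here it is \eqref{E:spec2.5}, the identity $\spec(\star D_2|_{\ker D_1^*\cap\ker C^*})=\spec(G|_{\ker C})$, and it is not "already identified in Lemma~\ref{L:specA2}". In the paper this requires computing $C^*C$ to find an explicit basis of the three-dimensional space $\ker C$ (built from eigenfunctions $\psi_0\in\ker(\Delta+\ii X_3)$ and $\psi_1\in\ker(\Delta+3\ii X_3)$), representing $G$ there by the matrix \eqref{E:GkerC}, then decomposing $Z=W+\tilde W$ by $\star$-parity, identifying $\ker D_1^*\cap\ker C^*$ via \eqref{E:kerD1C.3} with an explicit basis $b_1,b_2,b_3$ involving $\psi_0$ and $\psi_2\in\ker(\Delta+5\ii X_3)$, and finally checking that the resulting $3\times3$ matrix for $\star D_2$ is isospectral to \eqref{E:GkerC}. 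Your proposal replaces all of this with the expectation that the leftover pieces are "isospectral to $\Delta$ on $\ker^\perp B$" and "$-A$ on a kernel intersection" as in Lemma~\ref{L:specA2}; that is not what the leftover pieces look like here (note, e.g., the appearance of the eigenvalue $+X_3$ and of $\ker(\Delta+5\ii X_3)$), so the multiplicity count you need cannot be borrowed from the earlier lemma. In short: correct strategy at the level of slogans, but the intertwiner, the derivation of $G$, and the finite-dimensional matching — i.e., the actual content of the proof — are absent, and the specific reductions you do propose would fail.
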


	\begin{proof}
		Using \eqref{E:D1=ZA}, \eqref{E:AD0}, \eqref{E:Z*Z}, \eqref{E:AJA}, \eqref{E:D1*D1}, and \eqref{E:Z*sZ} one readily checks:
		\begin{align}
			D_1^*D_1=D_1^*(ZA)&=A(A^2+2X_{33}),\label{E:D1*ZA}
			\\
			D_1^*\star D_1=D_1^*(\star ZA)&=-AX_{33},\label{E:D1*sZA}
			\\
			D_1^*(ZJA)&=-A^2X_3\label{E:D1*ZJA}.
		\end{align}
		Furthermore, from \eqref{E:DD=0}, \eqref{E:AD0}, \eqref{E:Z*Z}, \eqref{E:D2.ZJZs}, \eqref{E:sZJA}, and \eqref{E:J*Z*ZJ} we obtain:
		\begin{align}
			\star D_2D_1=\star D_2(ZA)&=0\label{E:sD2ZA}
\\
			\star D_2(\star ZA)&=-(ZA)X_3-2(\star ZA)X_3-(ZJA)A\label{E:sD2sZA}
			\\
			\star D_2(ZJA)&=\tfrac12(\star ZA)A-\tfrac12(ZJA)X_3\label{E:sD2ZJA}
		\end{align}
		Put
		\begin{align}
			E&:=\star ZA(A^2+2X_{33})+(ZA)X_{33}=\star D_1(A^2+2X_{33})+D_1X_{33}\label{E:E},\\
			F&:=ZJA(A^2+2X_{33})+(ZA)AX_3\label{E:F}.
		\end{align}
		Using \eqref{E:Z*Z}, \eqref{E:AJA}, \eqref{E:Z*sZ}, \eqref{E:J*Z*ZJ}, \eqref{E:D1*ZA}, \eqref{E:D1*sZA}, \eqref{E:D1*ZJA}, and the fact that $X_3$ is central, one readily checks
		\begin{equation}\label{E:D1*EF}
			D_1^*E=0=D_1^*F,
		\end{equation}
		as well as
		\begin{align*}
			E^*E&=A(A^2+X_{33})(A^2+2X_{33})(A^2+3X_{33}),\\
			F^*F&=A(A^2+X_{33})(A^2+2X_{33})(\tfrac12A^2+3X_{33}),\\
			-F^*E=E^*F&=A(A^2+X_{33})(A^2+2X_{33})AX_3.
		\end{align*}
		Moreover, using \eqref{E:sD2ZA}, \eqref{E:sD2sZA}, and \eqref{E:sD2ZJA}, we get
		\begin{align*}
			\star D_2E&=-2EX_3-FA,\\
			\star D_2F&=\tfrac12EA-\tfrac12FX_3.
		\end{align*}
		Hence, 
		\begin{equation}\label{E:EF*EF}(E,F)^*(E,F)=A(A^2+X_{33})(A^2+2X_{33})\begin{pmatrix}A^2+3X_{33}&AX_3\\-AX_3&\tfrac12A^2+3X_{33}\end{pmatrix}\end{equation}
		and 
		\begin{equation}\label{E:sD2EF}
			\star D_2(E,F)=(E,F)\begin{pmatrix}-2X_3&\tfrac12A\\-A&-\tfrac12X_3\end{pmatrix}.
		\end{equation}
		Putting 
		\begin{equation}\label{E:C}
			C:=(EB,\sqrt2FB),
		\end{equation} 
		where $B$ is defined in \eqref{E:B}, we obtain from \eqref{E:G}, \eqref{E:D1*EF}, \eqref{E:EF*EF}, \eqref{E:sD2EF}, \eqref{E:AB=BD}, \eqref{E:B*B}
		\begin{align}
			D_1^*C&=0,\label{E:D1*C}\\
			\star D_2C&=CG,\label{E:sD2C}
		\end{align}
		and
		\begin{equation}\label{E:C*C}
			C^*C=\Delta^2(\Delta^2+X_{33})^2(\Delta^2+2X_{33})\begin{pmatrix}\Delta^2+3X_{33}&\sqrt2\Delta X_3\\-\sqrt2\Delta X_3&\Delta^2+6X_{33}\end{pmatrix}.
		\end{equation}
		Because of \eqref{E:sD2ZA} and \eqref{E:sD2C} the decomposition 
		\[
			L^2(\R)\otimes\C^2=\ker C\oplus\ker^\perp C
		\]
		is invariant under $G=-G^*$, the decomposition 
		\[
			L^2(\R)\otimes\C^3
			=\cimg D_1\oplus\ker D_1^*
			=\cimg D_1\oplus\cimg C\oplus(\ker D_1^*\cap\ker C^*)
		\]
		is invariant under $\star D_2=-(\star D_2)^*$, and we have:
		\begin{align}
			\spec(G)			&=\spec(G|_{\ker C})\sqcup\spec(G|_{\ker^\perp C}),\label{E:spec2.1}\\
			\spec_*(\star D_2)		&=\spec_*(\star D_2|_{\ker D_1^*}),\\
			\spec(\star D_2|_{\ker D_1^*})	&=\spec(\star D_2|_{\cimg C})\sqcup\spec(\star D_2|_{\ker D_1^*\cap\ker C^*}),\\
			\spec(\star D_2|_{\cimg C})	&=\spec(G|_{\ker^\perp C}).\label{E:spec2.4}
		\end{align}
		It remains to show 
		\begin{equation}\label{E:spec2.5}
			\spec(\star D_2|_{\ker D_1^*\cap\ker C^*})=\spec(G|_{\ker C}).
		\end{equation}
		Indeed, combining \eqref{E:spec2.1}--\eqref{E:spec2.5}, we obtain 
		\[
			\spec_*(\star D_2)=\spec_*(G)=\spec(G).
		\] 
		Here $\ker G=0$ as $(-2X_3)(-\tfrac12X_3)-(\Delta/\sqrt2)(-\Delta/\sqrt2)=\tfrac12(\Delta^2+2X_{33})$ has trivial kernel, cf.~\eqref{E:G}, \eqref{E:spec0.Schroedinger}, and \eqref{E:X3.Schroedinger}.

		In order to prove \eqref{E:spec2.5}, we assume w.l.o.g.\ $\ii X_3<0$.
		Combining \eqref{E:C*C} with 
		\[
			(\Delta^2+3X_{33})(\Delta^2+6X_{33})-(-\sqrt2\Delta X_3)(\sqrt2\Delta X_3)=(\Delta^2+2X_{33})(\Delta^2+9X_{33}),
		\] 
		and $\ker\Delta=0=\ker(\Delta^2+2X_{33})$, we conclude that 
		\[
			\begin{pmatrix}\psi_0\\0\end{pmatrix},\quad\begin{pmatrix}0\\\psi_0\end{pmatrix},\quad\begin{pmatrix}\psi_1\\\sqrt2\ii\psi_1\end{pmatrix}
		\] 
		is a basis of $\ker C$, where 
		\begin{equation}\label{E:psi1}
			0\neq\psi_0\in\ker(\Delta^2+X_{33})=\ker(\Delta+\ii X_3)
		\end{equation} 
		and 
		\[
			0\neq\psi_1\in\ker(\Delta^2+9X_{33})=\ker(\Delta+3\ii X_{3}).
		\]
		With respect to this basis, the action of $G$ on $\ker C$ is represented by the matrix 
		\begin{equation}\label{E:GkerC}
			\begin{pmatrix}-2&-\ii/\sqrt2&0\\\ii/\sqrt2&-\tfrac12&0\\0&0&1\end{pmatrix}X_3.
		\end{equation}

		In view of \eqref{E:D1=ZA}, \eqref{E:C}, \eqref{E:E}, \eqref{E:F}, \eqref{E:AB=BD}, and $\ker\Delta(\Delta^2+2X_{33})=0$, we have
		\begin{equation}\label{E:kerD1C.1}
			\ker D_1^*\cap\ker C^*=\ker(ZA)^*\cap\ker(ZJB)^*\cap\ker(\star ZB)^*.
		\end{equation}
		Using \eqref{E:B}, \eqref{E:spec3}, \eqref{E:spec4}, and \eqref{E:spec5}, we get $\ker B^*\supseteq\ker A=\cimg D_0$.
		The latter equality can alternatively be obtain from the obvious inclusion $\ker A\subseteq\ker D_1$, cf.~\eqref{E:D1=ZA}, and the exactness of the Rumin complex.
		Combining this with \eqref{E:kerD0B}, we obtain the orthogonal decomposition 
		\[
			\ker B^*=\ker A\oplus(\ker D_0^*\cap\ker D_0^*J).
		\]
		As the right summand is invariant under $J$, this yields
		\[
			\ker B^*\cap\ker AJ=\ker A\cap\ker AJ=\ker A\cap\ker B^*J.
		\]
		Hence,
		\[
			\ker(ZA)^*\cap\ker(ZJA)^*=\ker(ZB)^*\cap\ker(ZJA)^*=\ker(ZA)^*\cap\ker(ZJB)^*
		\]
		and
		\[
			\ker(\star ZA)^*\cap\ker(\star ZJA)^*=\ker(\star ZB)^*\cap\ker(\star ZJA)^*.
		\]
		Combining the latter two with \eqref{E:kerD1C.1} and \eqref{E:sZJA}, we obtain
		\begin{equation}\label{E:kerD1C.2}
			\ker D_1^*\cap\ker C^*=\ker(ZA)^*\cap\ker(ZJA)^*\cap\ker(\star ZA)^*\cap\ker(\star ZJA)^*.
		\end{equation}

		Since the subspace in \eqref{E:kerD1C.2} is invariant under $\star$, we decompose 
		\[
			L^2(\R)\otimes\C^3=\Omega^2_+\oplus\Omega^2_-
		\] 
		where $\Omega^2_+=\ker(\star-1)$ and $\Omega^2_-=\ker(\star+1)$ denote the eigenspaces of $\star$.
		Accordingly, we decompose 
		\begin{equation}\label{E:ZWtW}
			Z=W+\tilde W
		\end{equation} 
		with
		\begin{equation}\label{E:W}
			W=\tfrac12(Z+\star Z)=\frac12\begin{pmatrix}X_2&-X_1\\0&0\\X_2&-X_1\end{pmatrix}=\frac12\begin{pmatrix}1\\0\\1\end{pmatrix}D_0^*J
		\end{equation}
		and
		\begin{equation}\label{E:tW}
			\tilde W=\tfrac12(Z-\star Z)=\frac12\begin{pmatrix}-X_2&-X_1\\\sqrt2X_1&-\sqrt2X_2\\X_2&X_1\end{pmatrix}=\frac12\begin{pmatrix}1&0\\0&\sqrt2\\-1&0\end{pmatrix}\begin{pmatrix}-X_2&-X_1\\X_1&-X_2\end{pmatrix}.
		\end{equation}
		From \eqref{E:kerD1C.2} we thus get 
		\begin{multline}\label{E:kerD1C.3}
			\ker D_1^*\cap\ker C^*=\Bigl(\Omega^2_+\cap\ker(WA)^*\cap\ker(WJA)^*\Bigr)\\\oplus\Bigl(\Omega^2_-\cap\ker(\tilde WA)^*\cap\ker(\tilde WJA)^*\Bigr).
		\end{multline}

		One readily checks using \eqref{E:A}
		\[
			W^*W=\tfrac12(A+JX_3),\qquad\tilde W^*\tilde W=\tfrac12(\Delta+JX_3),\qquad W^*\tilde W=0=\tilde W^*W.
		\]
		as well as
		\begin{equation}\label{E:WW*}
			WW^*\begin{pmatrix}1\\0\\1\end{pmatrix}
			=\begin{pmatrix}1\\0\\1\end{pmatrix}\tfrac12\Delta
		\end{equation}
		and
		\begin{equation}\label{E:tWtW*}
			\tilde W\tilde W^*\begin{pmatrix}1&0\\0&\sqrt2\\-1&0\end{pmatrix}
			=\begin{pmatrix}1&0\\0&\sqrt2\\-1&0\end{pmatrix}\tfrac12(\Delta-JX_3).
		\end{equation}
		From \eqref{E:D0Delta}, \eqref{E:D0D0s}, \eqref{E:A} 
		\[
			A+JAJ^*=\Delta-3JX_3.
		\]
		Squaring this relation, and using \eqref{E:AJA} we get
		\begin{equation}\label{E:AA+JAAJ}
			A^2+JA^2J^*=(\Delta-2JX_3)(\Delta-3JX_3).
		\end{equation}
		Combining this with the commutator relations 
		\[
			W\Delta-\Delta W=2WJX_3,\qquad\tilde W\Delta-\Delta\tilde W=-2\tilde WJX_3
		\]
		and the relations 
		\[
			\Delta WJW^*=-X_3WW^*,\qquad J\tilde W^*\begin{pmatrix}1&0\\0&\sqrt2\\-1&0\end{pmatrix}=\tilde W^*\begin{pmatrix}1&0\\0&\sqrt2\\-1&0\end{pmatrix}J
		\]
		and \eqref{E:WW*}, \eqref{E:tWtW*} one readily derives
		\begin{equation}\label{E:WAWJA}
			\Bigl(WA(WA)^*+WJA(WJA)^*\Bigr)\begin{pmatrix}1\\0\\1\end{pmatrix}
			=\begin{pmatrix}1\\0\\1\end{pmatrix}\tfrac12\Delta(\Delta^2+X_{33})
		\end{equation}
		and
		\begin{multline}\label{E:tWAtWJA}
			\Bigl(\tilde WA(\tilde WA)^*+\tilde WJA(\tilde WJA)^*\Bigr)\begin{pmatrix}1&0\\0&\sqrt2\\-1&0\end{pmatrix}
			\\
			=\begin{pmatrix}1&0\\0&\sqrt2\\-1&0\end{pmatrix}\tfrac12(\Delta-JX_3)(\Delta-4JX_3)(\Delta-5JX_3).
		\end{multline}
		Furthermore, for each scalar $\lambda$
		\begin{equation}\label{E:DJX3.diag}
			(\Delta-\lambda JX_3)\begin{pmatrix}1&1\\\ii&-\ii\end{pmatrix}
			=\begin{pmatrix}1&1\\\ii&-\ii\end{pmatrix}\begin{pmatrix}\Delta-\lambda\ii X_3&0\\0&\Delta+\lambda\ii X_3\end{pmatrix}.
		\end{equation}

		Consider $\psi_0$ as in \eqref{E:psi1} above, choose 
		\[
			0\neq\psi_2\in\ker(\Delta+5\ii X_3),
		\] 
		and put
		\[
			b_1=\begin{pmatrix}\psi_0\\0\\\psi_0\end{pmatrix},\qquad 
			b_2=\begin{pmatrix}\psi_2\\-\sqrt2\ii\psi_2\\-\psi_2\end{pmatrix},\qquad 
			b_3=\begin{pmatrix}\psi_0\\-\sqrt2\ii\psi_0\\-\psi_0\end{pmatrix}.
		\]
		From \eqref{E:WAWJA}, \eqref{E:spec0.Schroedinger}, and \eqref{E:X3.Schroedinger} we see that $b_1$ is a basis of the first summand in \eqref{E:kerD1C.3}.
		Using \eqref{E:tWAtWJA} and \eqref{E:DJX3.diag} we see that $b_2,b_3$ is a basis of the second summand in \eqref{E:kerD1C.3}.
		In view of \eqref{E:WW*} and \eqref{E:tWtW*}, we have 
		\begin{equation}\label{E:bi.1}
			WW^*b_1=-\tfrac\ii2X_3b_1,\qquad
			\tilde W\tilde W^*b_2=-2\ii X_3b_2,\qquad
			\tilde W\tilde W^*b_3=0.
		\end{equation}
		Using \eqref{E:kerD1C.3}, \eqref{E:AA+JAAJ}, \eqref{E:DJX3.diag}, and \eqref{E:spec0.Schroedinger}, we conclude that $W^*b_1$ and $\tilde W^*b_2$ are two nontrivial elements in the 1-dimensional subspace
		\begin{equation}\label{E:kerAAJ}
			\ker A\cap\ker AJ=\ker(\Delta-3JX_3).
		\end{equation}
		Rescaling $\psi_2$ we may w.l.o.g.\ assume $2\ii W^*b_1=\tilde W^*b_2$.
		Hence, 
		\begin{equation}\label{E:bi.2}
			\tilde WW^*b_1=-X_3b_2,\qquad W\tilde W^*b_2=X_3b_1,\qquad W\tilde W^*b_3=0.
		\end{equation}
		Using \eqref{E:DJX3.diag} we see that $J=-\ii$ on the subspace in \eqref{E:kerAAJ}.
		Combining this with \eqref{E:D2.ZJZs}, \eqref{E:ZWtW}, \eqref{E:bi.1} and \eqref{E:bi.2} we see that, with respect to the basis $b_1,b_2,b_3$, the action of $\star D_2$ on $\ker D_1^*\cap\ker C^*$ is represented by the matrix 
		\begin{equation*}
			\begin{pmatrix}-\tfrac32&\ii&0\\-\ii&-1&0\\0&0&1\end{pmatrix}X_3.
		\end{equation*}
		Since this matrix has the same spectrum as the matrix in \eqref{E:GkerC}, we obtain the equality \eqref{E:spec2.5}, whence the lemma.
	\end{proof}

	\begin{lemma}\label{L:spec.G}
		The spectrum of $G$, see \eqref{E:G}, in the Schr\"odinger representation $\rho_\hbar$ on $L^2(\R)$ consists of the eigenvalues
		\begin{equation}\label{E:spec.G}
			\mathbf i\hbar\cdot\frac{-5\pm\sqrt{8(2n+1)^2+9}}4,\qquad n=0,1,2,3,\dotsc
		\end{equation}
		which all have multiplicity one.
	\end{lemma}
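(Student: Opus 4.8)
The plan is to diagonalize $G$ by exploiting that, in the Schr\"odinger representation $\rho_\hbar$, the central element $X_3$ acts as the scalar $\mathbf i\hbar$ while $\Delta=D_0^*D_0$ is the harmonic oscillator with simple spectrum $|\hbar|(2n+1)$, $n\in\N_0$, cf.~\eqref{E:X3.Schroedinger} and \eqref{E:spec0.Schroedinger}. Let $\psi_n\in L^2(\R)$ be a corresponding orthonormal eigenbasis (the Hermite functions). Since every entry of the matrix $G$ in \eqref{E:G} is an at most linear expression in $\Delta$ and the scalar $X_3$, each finite dimensional subspace $V_n:=\C\psi_n\otimes\C^2\subseteq L^2(\R)\otimes\C^2$ is invariant under $G$, and $L^2(\R)\otimes\C^2=\bigoplus_{n\in\N_0}V_n$ is an orthogonal Hilbert space decomposition. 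Hence $\spec(G)=\bigsqcup_{n\in\N_0}\spec(G|_{V_n})$, with multiplicities, and the problem reduces to a family of $2\times2$ matrices. (One could also first note that $G$ is skew-adjoint, which guarantees an orthonormal eigenbasis.)

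Next I would compute $G|_{V_n}$. Substituting $\Delta=|\hbar|(2n+1)$ and $X_3=\mathbf i\hbar$ into \eqref{E:G}, the restriction of $G$ to $V_n$ is represented, with respect to the basis $\psi_n\otimes(1,0)$, $\psi_n\otimes(0,1)$, by the constant matrix
\[
	G_n=\begin{pmatrix}-2\mathbf i\hbar&|\hbar|(2n+1)/\sqrt2\\-|\hbar|(2n+1)/\sqrt2&-\tfrac12\mathbf i\hbar\end{pmatrix}.
\]
Using $(\mathbf i\hbar)^2=-\hbar^2$ and $|\hbar|^2(2n+1)^2=\hbar^2(2n+1)^2$, its characteristic polynomial is $\lambda^2+\tfrac52\mathbf i\hbar\lambda+\hbar^2\bigl(\tfrac12(2n+1)^2-1\bigr)$, whose discriminant works out to $-\tfrac14\hbar^2\bigl(8(2n+1)^2+9\bigr)$. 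Solving the quadratic then yields the two roots $\mathbf i\hbar\cdot\tfrac14\bigl(-5\pm\sqrt{8(2n+1)^2+9}\bigr)$, which is exactly \eqref{E:spec.G}; the ambiguity in the branch of the square root is immaterial because of the $\pm$.

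Finally I would check the multiplicity-one claim. For fixed $n$ the discriminant $8(2n+1)^2+9$ is positive, so $G_n$ has two distinct eigenvalues, each of multiplicity one within $V_n$. For distinctness across different values of $n$, note that $n\mapsto\sqrt{8(2n+1)^2+9}$ is strictly increasing on $\N_0$, so the family $-5+\sqrt{8(2n+1)^2+9}$ is strictly increasing and the family $-5-\sqrt{8(2n+1)^2+9}$ is strictly decreasing, while a member of the first family never equals one of the second, as that would force $\sqrt{8(2n+1)^2+9}\le0$. Multiplying by the nonzero scalar $\mathbf i\hbar$ preserves this distinctness, so every eigenvalue in \eqref{E:spec.G} indeed has multiplicity one. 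There is no genuinely hard step here: the only points needing care are the block-diagonalization (which rests on $X_3$ being scalar and $\Delta$ having simple spectrum) and the elementary bookkeeping for the multiplicity count.
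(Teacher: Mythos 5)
Your proposal is correct and follows essentially the same route as the paper: decompose $L^2(\R)\otimes\C^2$ into the $2$-dimensional $G$-invariant subspaces spanned by $\psi_n\otimes\C^2$ (using that $X_3=\mathbf i\hbar$ is scalar and $\Delta$ has simple spectrum $|\hbar|(2n+1)$), compute the eigenvalues of the resulting $2\times2$ matrices, and conclude by density that these exhaust the spectrum. Your explicit verification of the multiplicity-one claim across different $n$ is a small but welcome addition that the paper leaves implicit.
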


	\begin{proof}
		Choose $0\neq\psi_n\in L^2(\R)$ such that $\Delta\psi_n=|\hbar|(2n+1)\psi_n$, cf.\ \eqref{E:spec0.Schroedinger}.
		The subspace spanned by $\begin{pmatrix}\psi_n\\0\end{pmatrix}$ and $\begin{pmatrix}0\\\psi_n\end{pmatrix}$ is invariant under $G$, and with respect to this basis, the action of $G$ is represented by the matrix
		\[
			\mathbf i
			\begin{pmatrix}
				-2\hbar&-\mathbf i|\hbar|(2n+1)/\sqrt2\\
				\mathbf i|\hbar|(2n+1)/\sqrt2&-\hbar/2
			\end{pmatrix},
		\]
		cf.~\eqref{E:X3.Schroedinger} and \eqref{E:G}.
		The two eigenvalues of this matrix are given in \eqref{E:spec.G}.
		As the sum of these 2-dimensional subspaces is dense in $L^2(\R)\otimes\C^2$, these eigenvalues exhaust the full spectrum of $G$.
	\end{proof}

	Combining \eqref{E:D2*D2} with Lemma~\ref{L:specsD2} and Lemma~\ref{L:spec.G}, we obtain the spectrum of $D_2^*D_2$ in the Schr\"o\-din\-ger representation $\rho_\hbar$,
	\begin{equation}\label{E:spec2.Schroedinger}
		\spec_*(D_2^*D_2):\quad
		\lambda_n^\pm=\hbar^2\left(\frac{\sqrt{8(2n+1)^2+9}\pm5}4\right)^2,\quad n=0,1,2,3\dotsc
	\end{equation}

	Incidentally, the spectrum of $D_2^*D_2$ is simple, i.e., the eigenvalues $\lambda^\pm_n$ are mutually distinct.
	Indeed, one readily checks 
	\[
		5<\sqrt{209}-9\leq\sqrt{8(2n+3)^2+9}-\sqrt{8(2n+1)^2+9}\leq8/\sqrt2\leq6
	\] 
	for all $n\geq1$ as this difference is monotone in $n$.

\subsection{The determinants}\label{SS:dets.Schroedinger}

	Having determined the spectrum of $D_q^*D_q$ explicitly, we will now evaluate the zeta function 
	\begin{equation}\label{E:zeta.schroe}
		\zeta_{D_q^*D_q}(s)=\trs(D_q^*D_q)^{-s}
	\end{equation} 
	and its derivative at $s=0$.

	\begin{proposition}\label{P:zeta.Schroedinger}
		In the Schr\"odinger representation $\rho_\hbar$ on $L^2(\R,d\theta)$ we have:
		\begin{align}
			\zeta_{D_0^*D_0}(0)&=0,&\exp\bigl(-\zeta'_{D_0^*D_0}(0)\bigr)&=\sqrt2,\label{E:zetaD0}\\
			\zeta_{D_1^*D_1}(0)&=0,&\exp\bigl(-\zeta'_{D_1^*D_1}(0)\bigr)&=\sqrt8\cdot\sin\left(\pi\cdot\tfrac{\sqrt2-1}2\right),\label{E:zetaD1}\\
			\zeta_{D_2^*D_2}(0)&=0,&\exp\bigl(-\zeta'_{D_2^*D_2}(0)\bigr)&=4\cdot\sin^2\left(\pi\cdot\tfrac{\sqrt2-1}2\right).\label{E:zetaD2}
		\end{align}
	\end{proposition}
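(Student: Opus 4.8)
Throughout, the computations are carried out in the Schr\"odinger representation $\rho_\hbar$ on $L^2(\R,d\theta)$, and the three identities are read off the explicit spectra \eqref{E:spec0.Schroedinger}, \eqref{E:spec1.Schroedinger} and \eqref{E:spec2.Schroedinger} obtained in the previous subsections. In each case $s=0$ lies strictly to the left of the abscissa of convergence of \eqref{E:zeta.schroe}, and the continuation is holomorphic there by Theorem~\ref{T:zeta}, so it suffices to identify $\zeta_{D_q^*D_q}(0)$ and $\zeta'_{D_q^*D_q}(0)$. For $q=0$ the spectrum \eqref{E:spec0.Schroedinger} gives $\zeta_{D_0^*D_0}(s)=|\hbar|^{-s}(1-2^{-s})\zeta_\Riem(s)$, so $\zeta_\Riem(0)=-\tfrac12$ and $\zeta_\Riem'(0)=-\tfrac12\log(2\pi)$ yield \eqref{E:zetaD0}; this is the classical harmonic oscillator computation already recalled after Theorem~\ref{T:dets.Schroedinger}.

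For $q=1$ I would first write the eigenvalues in \eqref{E:spec1.Schroedinger} in product form, $|\hbar|^3(2n+1)\bigl|(2n+1)^2-2\bigr|=8|\hbar|^3\,\bigl|(n+a)(n+b)(n+c)\bigr|$, with $a=\tfrac12$, $b=\tfrac{1-\sqrt2}2$, $c=\tfrac{1+\sqrt2}2$; here only the factor $n+b$ is negative (at $n=0$) and $b+c=1$. Thus $\zeta_{D_1^*D_1}(s)=(8|\hbar|^3)^{-s}\bigl(|abc|^{-s}+\sum_{n\geq1}((n+a)(n+b)(n+c))^{-s}\bigr)$, and the tail is handled by the standard device: expand $\log((n+a)(n+b)(n+c))=3\log n+\sum_{k\geq1}\tfrac{(-1)^{k+1}}k\,\tfrac{a^k+b^k+c^k}{n^k}$, expand $((n+a)(n+b)(n+c))^{-s}$ in powers of $s$, and sum term by term against $\zeta_\Riem$. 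Only $\zeta_\Riem(3s)$ and the simple pole of $\zeta_\Riem(3s+1)$ feed into $\zeta_{D_1^*D_1}(0)$, which vanishes because $a+b+c=\tfrac32$; collecting the pole term with the convergent series $\sum_{k\geq2}\tfrac{(-1)^k\zeta_\Riem(k)}{k}(a^k+b^k+c^k)$ via the Taylor expansion of $\log\Gamma(1+z)$ (the Euler constant dropping out) gives
\[
	\zeta'_{D_1^*D_1}(0)=-\log|abc|+3\zeta_\Riem'(0)+\log\bigl(\Gamma(1+a)\Gamma(1+b)\Gamma(1+c)\bigr).
\]
Now $|abc|=\tfrac18$, $\Gamma(1+a)=\Gamma(\tfrac32)=\tfrac{\sqrt\pi}2$, and the reflection formula $\Gamma(b)\Gamma(1-b)=\pi/\sin\pi b$, applied with $c=1-b$ and $bc=-\tfrac14$, turns $\Gamma(1+b)\Gamma(1+c)=bc\,\Gamma(b)\Gamma(c)$ into $\frac{\pi}{4\sin(\pi\tfrac{\sqrt2-1}2)}$; the powers of $\pi$ cancel against $3\zeta_\Riem'(0)=-\tfrac32\log(2\pi)$, the powers of $2$ against $-\log|abc|$, and \eqref{E:zetaD1} follows.

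For $q=2$ the strategy is the same, but the eigenvalues \eqref{E:spec2.Schroedinger} are no longer a polynomial family: with $R_n:=\sqrt{8(2n+1)^2+9}$ one has $\lambda_n^\pm=\tfrac{\hbar^2}{16}(R_n\pm5)^2$, so $\zeta_{D_2^*D_2}(s)=\bigl(\tfrac{\hbar^2}{16}\bigr)^{-s}\sum_{n\geq0}\bigl((R_n+5)^{-2s}+|R_n-5|^{-2s}\bigr)$. I would expand $\log|R_n\pm5|=\log(2\sqrt2)+\log(2n+1)\pm\tfrac5{2\sqrt2(2n+1)}-\tfrac1{(2n+1)^2}+O\bigl((2n+1)^{-3}\bigr)$, subtract from each branch its three leading terms in the $(2n+1)^{-1}$-expansion (which, summed over $n$, are explicit combinations of $(1-2^{-z})\zeta_\Riem(z)$) so as to leave a remainder that is holomorphic near $s=0$ and can be differentiated there, and then add the two branches. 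The $O((2n+1)^{-1})$ corrections — together with the $\gamma$-terms produced by the pole of $\zeta_\Riem$ at $1$ — enter the two branches with opposite signs and cancel; the $O((2n+1)^{-2})$ corrections combine with $\sum_{m\text{ odd}}\tfrac2{m^2}=\tfrac{\pi^2}4$ and cancel as well; what survives is
\[
	\zeta_{D_2^*D_2}(0)=0,\qquad \zeta'_{D_2^*D_2}(0)=-2\log2-2\sum_{m\text{ odd}}\log\tfrac{|m^2-2|}{m^2}.
\]
Writing $m=2n+1$ one has $\tfrac{|m^2-2|}{m^2}=\tfrac{|n+b|(n+c)}{(n+\tfrac12)^2}$ with the same $b,c$ as above, and since $b+c=2\cdot\tfrac12$ the product converges and equals $\tfrac{\Gamma(3/2)^2}{\Gamma(1+b)\Gamma(1+c)}=\sin(\pi\tfrac{\sqrt2-1}2)$ (the factor at $n=0$ being $1$), which gives \eqref{E:zetaD2}.

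The case $q=2$ is the real obstacle. Because the spectrum of $D_2^*D_2$ is not polynomial in $n$, one cannot reduce $\zeta_{D_2^*D_2}(0)$ and $\zeta'_{D_2^*D_2}(0)$ to a finite combination of Hurwitz zeta values as for $q=1$; the argument depends on carrying the asymptotic expansion of $\log|R_n\pm5|$ far enough, on keeping precise track of which correction terms survive the cancellation between the two spectral branches, and on the absolute values forced by the negative factor $n+b$ at $n=0$ (equivalently, by $R_0=\sqrt{17}<5$). That all the surviving contributions assemble into the single convergent product $\prod_{m\text{ odd}}\tfrac{|m^2-2|}{m^2}$, evaluated again by the Gamma reflection formula, is what makes the final answer as clean as stated; in particular the dependence on $\hbar$ disappears in both $q=1$ and $q=2$, in accordance with \eqref{E:Aut.det} and \eqref{E:Aut.rho.h}.
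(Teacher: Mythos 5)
Your overall strategy is the same as the paper's: take the spectra \eqref{E:spec0.Schroedinger}, \eqref{E:spec1.Schroedinger}, \eqref{E:spec2.Schroedinger} as input, reduce the zeta values at $s=0$ to Hurwitz/Gamma data, and finish with Euler's reflection formula; your final formulas in all three cases agree with \eqref{E:zetaD0}--\eqref{E:zetaD2}. For $q=2$ your bookkeeping is a legitimate variant of the paper's: the paper compares $\zeta_{D_2^*D_2}(s)$ with $\xi(s)=\sum_n(\lambda_n^+\lambda_n^-)^{-s}$, exploiting that $\lambda_n^+\lambda_n^-=\tfrac{\hbar^4}4\bigl((2n+1)^2-2\bigr)^2$ is again a polynomial family, whereas you add the two branches directly and use that $\log(R_n+5)+\log|R_n-5|=\log\bigl(8\,|(2n+1)^2-2|\bigr)$ exactly, so after pairing only an $O(n^{-2})$ remainder survives near $s=0$. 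Both devices rest on the same cancellation, your intermediate identity $\zeta'_{D_2^*D_2}(0)=-2\log2-2\sum_{m\,\mathrm{odd}}\log\tfrac{|m^2-2|}{m^2}$ is correct, and so is the evaluation $\prod_{m\,\mathrm{odd}}\tfrac{|m^2-2|}{m^2}=\sin\bigl(\pi\tfrac{\sqrt2-1}2\bigr)$ via the reflection formula.

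There is, however, one step in your $q=1$ argument that fails as written. You propose to reach $\zeta'_{D_1^*D_1}(0)$ by expanding $\log\bigl((n+a)(n+b)(n+c)\bigr)$ in powers of $1/n$ for all $n\geq1$ and resumming against what you call the convergent series $\sum_{k\geq2}\tfrac{(-1)^k\zeta_\Riem(k)}{k}(a^k+b^k+c^k)$ via the Taylor expansion of $\log\Gamma(1+z)$. Since $c=\tfrac{1+\sqrt2}2>1$, the expansion of $\log(1+c/n)$ is not valid at $n=1$, the series $\sum_{k\geq2}\tfrac{(-1)^k\zeta_\Riem(k)}{k}c^k$ diverges, and the Taylor series of $\log\Gamma(1+z)$ (radius $1$) cannot be evaluated at $z=c$. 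Your displayed identity $\zeta'_{D_1^*D_1}(0)=-\log|abc|+3\zeta_\Riem'(0)+\log\bigl(\Gamma(1+a)\Gamma(1+b)\Gamma(1+c)\bigr)$ is nevertheless correct, and the standard repair is exactly what the paper does: subtract only the terms $-1+\tfrac{3s}{2n}$, so that the remaining sum is $O(n^{-2})$ uniformly and may be differentiated at $s=0$ term by term, and then use $\sum_{n\geq1}\bigl(\tfrac zn-\log(1+\tfrac zn)\bigr)=\log\Gamma(1+z)+\gamma z$, which follows from the Weierstrass product \eqref{E:weier} and is valid for all real $z>-1$; alternatively, split off the $n=1$ term before expanding in $1/n$. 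With that modification your argument goes through, the Euler constant cancels as you claim, and the remaining reflection-formula arithmetic (including $bc=-\tfrac14$ and the sign of $\sin(\pi b)$) is correct.
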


	\begin{proof}
		Using \eqref{E:spec0.Schroedinger}, we have
		\begin{equation}\label{E:zeta0.Schroedinger}
			\zeta_{D_0^*D_0}(s)
			=|\hbar|^{-s}\sum_{n=0}^\infty(2n+1)^{-s}
			=|\hbar|^{-s}\cdot\bigl(1-2^{-s}\bigr)\cdot\zeta_\Riem(s)
		\end{equation}
		where $\zeta_\Riem(s)=\sum_{n=1}^\infty n^{-s}$ denotes the Riemann zeta function.
		Hence, \eqref{E:zetaD0} follows from the classical fact $\zeta_\Riem(0)=-\frac12$.

		Using \eqref{E:spec1.Schroedinger}, writing 
		\[
			(2n+1)\bigl|(2n+1)^2-2\bigr|
			=\begin{cases}
				2^3\bigl(n+\tfrac12\bigr)\bigl(n+\tfrac{1+\sqrt2}2\bigr)\bigl(n+\tfrac{1-\sqrt2}2\bigr)&\text{for $n\geq1$,}
				\\
				1&\text{for $n=0$,}
			\end{cases}
		\]
		and proceeding as in \cite[Theorem~1]{M06}, we find:
		\begin{align*}
			\zeta_{D_1^*D_1}(s)
			&=(2|\hbar|)^{-3s}\left(8^s+\sum_{n=1}^\infty\left(n+\tfrac12\right)^{-s}\left(n+\tfrac{1+\sqrt2}2\right)^{-s}\left(n+\tfrac{1-\sqrt2}2\right)^{-s}\right)
			\\
			&=(2|\hbar|)^{-3s}\Bigg\{8^s+\zeta_\Riem(3s)-\frac{3s}2\zeta_\Riem(3s+1)\\
			&\qquad+\sum_{n=1}^\infty n^{-3s}\underbrace{\left(\left(1+\tfrac1{2n}\right)^{-s}\left(1+\tfrac{1+\sqrt2}{2n}\right)^{-s}\left(1+\tfrac{1-\sqrt2}{2n}\right)^{-s}-1+\frac{3s}{2n}\right)}_{O(n^{-2})}\Bigg\}.
		\end{align*}
		For the latter estimate recall that 
		\begin{equation}\label{E:binom}
			(1+z)^{-s}=1-sz+O(z^2),\qquad\text{as $z\to0$,}
		\end{equation} 
		uniformly for $s$ in compact subsets.
		Hence, the sum above converges normally for $\Re s>-1/3$.
		Since $\zeta_\Riem(0)=-\frac12$ and 
		\begin{equation}\label{E:zeta.s=1}
			\zeta_\Riem(s+1)=\frac1s+\gamma+O(s),\qquad\text{as $s\to0$,}
		\end{equation} 
		this yields $\zeta_{D_1^*D_1}(0)=0$, the first equality in \eqref{E:zetaD1}.
		Using Weierstra{\ss}'s representation of the Gamma function, 
		\begin{equation}\label{E:weier}
			\frac1{\Gamma(z)}=ze^{\gamma z}\prod_{n=1}^\infty\left(1+\frac zn\right)e^{-z/n},
		\end{equation} 
		and $\zeta_\Riem'(0)=-\log\sqrt{2\pi}$, this also gives
		\[
			\exp\bigl(-\zeta_{D_1^*D_1}'(0)\bigr)
			=\frac{-(2\pi)^{3/2}}{\Gamma\bigl(\tfrac12\bigr)\Gamma\bigl(\frac{1+\sqrt2}2\bigr)\Gamma\bigl(\frac{1-\sqrt2}2\bigr)}.
		\]
		Using $\Gamma\bigl(\frac12\bigr)=\sqrt\pi$, Euler's reflection formula 
		\begin{equation}\label{E:Euler.refl}
			\Gamma(z)\Gamma(1-z)=\frac\pi{\sin(\pi z)},
		\end{equation}
		and $-\sin(z)=\sin(z-\pi)$, we obtain the second equality in \eqref{E:zetaD1}.

		Recall the spectrum of $D_2^*D_2$ from \eqref{E:spec2.Schroedinger}.
		We start by comparing 
		\[
			\zeta_{D_2^*D_2}(s)=\sum_{n=0}^\infty\bigl((\lambda_n^+)^{-s}+(\lambda_n^-)^{-s}\bigr)
		\]
		with 
		\[
			\xi(s):=\sum_{n=0}^\infty\bigl(\lambda_n^+\lambda_n^-\bigr)^{-s}.
		\]
		To this end, write $\lambda_n^\pm=q_n\pm p_n$ with 
		\[
			q_n=\hbar^2\cdot\frac{8(2n+1)^2+34}{16},\qquad p_n=\hbar^2\cdot\frac{10\sqrt{8(2n+1)^2+9}}{16}.
		\]
		Note that $q_n>p_n$ for all $n$.
		Then
		\begin{multline*}
			\sum_{n=0}^\infty\Bigl((\lambda_n^+)^{-s}+(\lambda_n^-)^{-s}-2(\lambda_n^+\lambda_n^-)^{-s/2}\Bigr)
			\\
			=\sum_{n=0}^\infty q_n^{-s}\underbrace{\left(\left(1+\tfrac{p_n}{q_n}\right)^{-s}+\left(1-\tfrac{p_n}{q_n}\right)^{-s}-2\left(1-\tfrac{p_n^2}{q_n^2}\right)^{-s/2}\right)}_{O(n^{-2})}
		\end{multline*}
		where the estimate follows from \eqref{E:binom} and $\frac{p_n}{q_n}=O(n^{-1})$.
		Hence, the sum on the right hand side converges normally for $\Re s>-\frac12$.
		We conclude
		\begin{equation}\label{E:zetaD2zeta}
			\zeta_{D_2^*D_2}(0)=2\xi(0),\qquad\zeta'_{D_2^*D_2}(0)=\xi'(0).
		\end{equation}
		Hence, it remains to study $\xi(s)$.
		As 
		\[
			\lambda_n^+\lambda_n^-
			=|\hbar|^4\cdot4\cdot\left(\bigl(n+\tfrac12\bigr)^2-\tfrac12\right)^2
			=|\hbar|^4\cdot4\cdot\left(n+\tfrac{1+\sqrt2}2\right)^2\left(n+\tfrac{1-\sqrt2}2\right)^2,
		\] 
		we proceed as before and obtain:
		\begin{align*}
			\xi(s)
			&=|\hbar|^{-4s}\cdot4^{-s}\cdot\left(16^s+\sum_{n=1}^\infty\left(n+\tfrac{1+\sqrt2}2\right)^{-2s}\left(n+\tfrac{1-\sqrt2}2\right)^{-2s}\right)
			\\
			&=|\hbar|^{-4s}\cdot 4^{-s}\cdot\Bigg\{16^s+\zeta_\Riem(4s)-2s\zeta_\Riem(4s+1)\\
			&\qquad+\sum_{n=1}^\infty n^{-4s}\underbrace{\left(\left(1+\tfrac{1+\sqrt2}{2n}\right)^{-2s}\left(1+\tfrac{1-\sqrt2}{2n}\right)^{-2s}-1+\frac{2s}n\right)}_{O(n^{-2})}\Bigg\}
		\end{align*}
		Hence, the sum on the right hand side converges normally for $\Re s>-1/4$.
		Using $\zeta_\Riem(0)=-\frac12$ and \eqref{E:zeta.s=1} this yields $\xi(0)=0$, whence the first equality in \eqref{E:zetaD2}, see also \eqref{E:zetaD2zeta}.
		Using $\zeta_\Riem'(0)=-\log\sqrt{2\pi}$, \eqref{E:weier} and \eqref{E:Euler.refl}, we also obtain, cf.~\cite[Theorem~1]{M06},
		\[
			\exp\bigl(-\xi'(0)\bigr)
			=\frac{(2\pi)^2}{\Gamma^2\bigl(\frac{1+\sqrt2}2\bigr)\Gamma^2\bigl(\frac{1-\sqrt2}2\bigr)}
			=4\sin^2\left(\pi\tfrac{1+\sqrt2}2\right)
			=4\sin^2\left(\pi\tfrac{\sqrt2-1}2\right).
		\]
		Combining this with \eqref{E:zetaD2zeta}, we get the second equality in \eqref{E:zetaD2}.
	\end{proof}

	So far we have assumed $a_g=1$ and $b_g=g$ in this section.
	Now suppose $g$ is any graded Euclidean inner product on $\goe$ such that $b_g$ is proportional to $g$.
	Then each matrix in \eqref{E:hq} is proportional to the identical matrix, and hence $D_q^{*_{g_h}}D_q=c_qD_q^*D_q$ for some positive constant $c_q$.
	Therefore, cf.~\eqref{E:zeta.|rho.D|} and \eqref{E:zeta.schroe},
	\[
		\zeta_{|\rho_\hbar(D_q)|_{h_g}}(s)=c_q^{-s/2}\zeta_{D_q^*D_q}(s/2).
	\]
	As $\zeta_{D_q^*D_q}(0)$ vanishes, we have 
	\[
		\zeta_{|\rho_\hbar(D_q)|_{h_g}}'(0)
		=\tfrac12\zeta'_{D_q^*D_q}(0).
	\]
	Theorem~\ref{T:dets.Schroedinger} now follows from Proposition~\ref{P:zeta.Schroedinger}, \eqref{E:det.def}, and \eqref{E:Hodge.det}.

\section{Asymptotic expansion of the heat trace}\label{S:heat.asymp}

	The purpose of this section is to work out the structure of the heat trace asymptotics for general left invariant positive Rockland operators \cite[Chapter~4]{FR16} in irreducible unitary representations of $G$, see Theorem~\ref{T:asymp}.
	Subsequently, we will use this to derive Theorem~\ref{T:zeta}.

	\begin{theorem}\label{T:asymp}
		Let $V$ finite dimensional complex vector space equipped with a Hermitian inner product, and suppose $A=A^*\in\mathcal U^{-2\kappa}(\goe)\otimes\eend(V)$ is a left invariant positive Rockland differential operator on the 5-dimensional Lie group $G$ which is homogeneous of order $2\kappa>0$ with respect to the grading automorphism.
		Then, for every $B\in\eend(V)$ the following hold true:
		\begin{enumerate}[(I)]
		\item	For all nontrivial irreducible unitary representations $\rho$ of $G$ and all $M$,
			\begin{equation}\label{E:asymp.infty}
				\tr\left(Be^{-t\rho(A)}\right)=O(t^{-M}),
			\end{equation}
                as $t\to\infty$.
		\item	In the Schr\"odinger representation $\rho_\hbar$, 
			\begin{equation}\label{E:asymp.Schroedinger}
				\tr\left(Be^{-t\rho_\hbar(A)}\right)
				\sim\sum_{j=0}^\infty t^{(j-1)/\kappa}d_j,
			\end{equation}
				as $t\to0$ with coefficients $d_j=d_{A,B,\hbar,j}$. 
		\item	In the generic representation $\rho_{\lambda,\mu,\nu}$, 
			\begin{equation}\label{E:asymp.gen}
				\tr\left(Be^{-t\rho_{\lambda,\mu,\nu}(A)}\right)
				\sim\sum_{j=0}^\infty t^{(j-3)/4\kappa}a_j,
			\end{equation}
			as $t\to0$. 
			Moreover, the coefficients $a_j=a_{A,B,\lambda,\mu,\nu,j}$ vanish for odd $j$.
		\end{enumerate}
	\end{theorem}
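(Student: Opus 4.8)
The plan is to pass through the global heat kernel of $A$ on $G$ together with Kirillov's character formula, reducing the three cases to the geometry of the relevant coadjoint orbits and to the dilation homogeneity of the heat kernel. Since $A=A^*$ is a positive Rockland operator homogeneous of order $2\kappa$, the semigroup $e^{-tA}$ is convolution by a kernel $h_t\in\mathcal S(G)\otimes\eend(V)$, cf.~\cite[Chapter~4]{FR16}, and the homogeneity of $A$ forces the scaling $h_t=t^{-Q/2\kappa}\bigl(h_1\circ\delta_{t^{-1/2\kappa}}\bigr)$, where $Q=10$ is the homogeneous dimension of $\goe$ and $\delta_r$ is the dilation acting by $r^j$ on $\goe_{-j}$. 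For any irreducible unitary representation $\rho$ one has $\rho(e^{-tA})=e^{-t\rho(A)}$, the operator $\rho(h_t)$ is trace class (a smoothing operator, as $h_t$ is Schwartz), and Kirillov's character formula for nilpotent Lie groups, cf.~\cite{K04}, gives
\[
	\tr\bigl(Be^{-t\rho(A)}\bigr)=\int_{\mathcal O_\rho}\tr_V\bigl(B\,\widehat{h_t\circ\exp}(\xi)\bigr)\,d\beta_{\mathcal O_\rho}(\xi),
\]
where $g\mapsto\hat g$ denotes the Euclidean Fourier transform $\goe\to\goe^*$ and $\beta_{\mathcal O}$ the canonical measure on the coadjoint orbit $\mathcal O_\rho$. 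The Fourier transform intertwines the dilation of $h_t$ with the dual dilation, so that, writing $\xi_i=\xi(X_i)$, $\widehat{h_t\circ\exp}(\xi)$ equals $\widehat{h_1\circ\exp}$ evaluated at the point whose $\xi_1,\xi_2$ are rescaled by $t^{1/2\kappa}$, whose $\xi_3$ is rescaled by $t^{1/\kappa}$, and whose $\xi_4,\xi_5$ are rescaled by $t^{3/2\kappa}$, all of which tend to $0$ as $t\to0$.

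Next I would insert the coadjoint orbits from Section~\ref{S:results}. For a nontrivial scalar representation $\rho_\alpha$ the operator $\rho_\alpha(A)\in\eend(V)$ is positive definite (positivity of $A$ together with the Rockland property), so $\tr(Be^{-t\rho_\alpha(A)})$ decays exponentially and \eqref{E:asymp.infty} holds. For the Schr\"odinger representation $\rho_\hbar$ the orbit is the affine plane $\{\xi_3=\hbar,\ \xi_4=\xi_5=0\}$ with $\beta$ a constant multiple of $d\xi_1\,d\xi_2$; substituting $\xi_i\mapsto t^{-1/2\kappa}\xi_i$, $i=1,2$, extracts a factor $t^{-1/\kappa}$ and leaves $t^{-1/\kappa}\,G(t^{1/\kappa}\hbar)$, where $G(\sigma)=\mathrm{const}\cdot\int_{\R^2}\tr_V\bigl(B\,\widehat{h_1\circ\exp}(\xi_1,\xi_2,\sigma,0,0)\bigr)\,d\xi_1\,d\xi_2$ is a Schwartz function of $\sigma\in\R$. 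Taylor expanding $G$ at $\sigma=0$ gives \eqref{E:asymp.Schroedinger}, with $d_j$ a multiple of $G^{(j)}(0)\,\hbar^j$, and the rapid decay of $G$ gives \eqref{E:asymp.infty} for $\rho_\hbar$. For the generic representation $\rho_{\lambda,\mu,\nu}$ the orbit is the parabolic cylinder $\{\xi_4=\lambda,\ \xi_5=\mu,\ \xi_3^2+2\mu\xi_1-2\lambda\xi_2=\mathrm{const}\}$, cf.~the Casimir in \eqref{E:rep.gen.nu}; parametrising it by $(\xi_3,s)$, with $\xi_1,\xi_2$ affine in $s$ and depending on $\xi_3$ only through $\xi_3^2$, and $\beta$ a constant multiple of $d\xi_3\,ds$, the adapted rescaling $\xi_3\mapsto t^{-1/4\kappa}\xi_3$, $s\mapsto t^{-1/2\kappa}s$ — chosen so that the $\xi_3^2$-terms inside $\xi_1,\xi_2$ stay bounded while $t^{1/\kappa}\xi_3\to0$ — extracts a factor $t^{-3/4\kappa}$ and leaves $t^{-3/4\kappa}\,\Psi(t^{1/4\kappa})$, where $\Psi(u)$ is the integral of $\tr_V(B\,\widehat{h_1\circ\exp})$ over the two rescaled orbit coordinates, a Schwartz function of $u$. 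Taylor expanding $\Psi$ at $u=0$ gives \eqref{E:asymp.gen}, with $a_j$ a multiple of $\Psi^{(j)}(0)$, and the decay of $\Psi$ gives \eqref{E:asymp.infty} for $\rho_{\lambda,\mu,\nu}$. The vanishing of $a_j$ for odd $j$ follows because the rescaling variable $u$ enters the arguments of $\widehat{h_1\circ\exp}$ only through $u^2$ in the $\xi_1,\xi_2$-slots and $u^6$ in the $\xi_4,\xi_5$-slots, but through $u^3\xi_3$ in the $\xi_3$-slot: replacing $u$ by $-u$ only flips the sign of the $\xi_3$-slot, which is restored by the change of variables $\xi_3\mapsto-\xi_3$ since the $\xi_1,\xi_2$-slots depend on $\xi_3$ only through $\xi_3^2$; hence $\Psi$ is even.

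The main obstacle is technical, not conceptual. The foundational facts — that $e^{-tA}$ is convolution by a Schwartz kernel obeying the stated dilation law, that $\rho(e^{-tA})=e^{-t\rho(A)}$, and the Kirillov character formula — I would quote from \cite{FR16,K04}. The work is in the last reduction step, especially for the generic representation: one must verify that the integral defining $\Psi(u)$ converges for $u$ near $0$, depends smoothly on $u$ there, and decays faster than every power as $u\to\infty$ (the latter needed for \eqref{E:asymp.infty}). This comes down to uniform Schwartz estimates after the change of orbit coordinates $(\xi_3,s)\mapsto(\xi_1,\xi_2)$, which has a fold along $\xi_3=0$; what makes it go through is that, in the rescaled variables, $\xi_1^2+\xi_2^2$ controls $s^2+\xi_3^4$ uniformly for $u$ in a bounded set, so the Schwartz decay of $\widehat{h_1\circ\exp}$ dominates the polynomial Jacobian and the polynomial coordinate factors and dominated convergence applies. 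Granted these estimates, the expansions \eqref{E:asymp.Schroedinger} and \eqref{E:asymp.gen}, with their exponents and the parity of the coefficients $a_j$, follow from Taylor's theorem with remainder applied to the smooth functions $G$ and $\Psi$.
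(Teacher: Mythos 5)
Your proposal is correct, and its analytic core is the same as the paper's: reduce the trace to an integral of the Schwartz function $\widehat{h_1\circ\exp}$ against arguments scaling like $t^{1/2\kappa},t^{1/\kappa},t^{3/2\kappa}$, perform the anisotropic rescaling (the $t^{-1/4\kappa}$ dilation of the $\xi_3$-direction producing the exponent $3/4\kappa$), and finish with Taylor's theorem with remainder, the parity substitution $\xi_3\mapsto-\xi_3$, and Schwartz decay for the $t\to\infty$ bound. Where you differ is in how the basic trace formula is obtained: you invoke the matrix version of Kirillov's character formula and integrate over the coadjoint orbit with its canonical measure, which is exactly the alternative the paper flags in its footnote to \eqref{E:tr.eAt}, cf.\ \cite[Theorems~4.3.1 and 4.3.3]{CG90}, whereas the paper realizes $\pi_\alpha$ by induction from the polarization $\poe_\alpha$ and computes the Schwartz kernel of $e^{-t\pi_\alpha(A)}$ on the diagonal. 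The paper's route buys, as a by-product, the explicit identification $\pi_\alpha\cong\rho_{\lambda,\mu,\nu}$ (resp.\ $\rho_\hbar$), which pins down which representation the expansion refers to and is reused verbatim in Sections~\ref{S:poly.heat} and~\ref{S:zeta.asymp}; on your route you must instead justify the orbit attached to $\rho_{\lambda,\mu,\nu}$ (via the central values $\xi_4=\lambda$, $\xi_5=\mu$ and the Casimir \eqref{E:rep.gen.nu}) and verify that the canonical measure is a constant multiple of $d\xi_3\,ds$ in your chart; both assertions are true but each deserves a line of proof. One small caveat: the bound you state ($\xi_1^2+\xi_2^2$ controlling $s^2+\xi_3^4$ for bounded $u$) yields convergence and smoothness of $\Psi$ near $u=0$, but the superpolynomial decay of $\Psi(u)$ as $u\to\infty$ needed for \eqref{E:asymp.infty} comes from the slots $u^6\lambda,u^6\mu$ being bounded away from zero since $(\lambda,\mu)\neq(0,0)$ — the same mechanism as the estimate preceding \eqref{E:tr.pi.alpha.infty} in the paper — so that step should be argued separately from the bounded-$u$ estimate.
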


	\begin{proof}
		We consider $A$ as a left invariant differential operator, acting on $V$-valued functions on $G$, which is homogeneous of order $2\kappa$ with respect to the grading automorphism, i.e., $\Phi_\tau^*A=\tau^{2\kappa}A$ for $\tau\neq0$.
		As $A$ is a positive Rockland operator, it has a heat kernel $k_t\in\mathcal S(G)\otimes\eend(V)$, the $\eend(V)$-valued Schwartz functions on $G$, see \cite[Theorem~4.2.7]{FR16}. 
		Hence, 
		\begin{equation}\label{E:kt.A}
			\bigl(e^{-tA}f\bigr)(g)
			=\int_Gk_t(h^{-1}g)f(h)dh
			=\int_Gk_t(h^{-1})f(gh)dh
		\end{equation}
		for $f\in\mathcal S(G)\otimes V$ and $g\in G$.
		By homogeneity,
		\begin{equation}\label{E:kt.homog}
			k_{\tau^{2\kappa}t}(\Phi^{-1}_\tau(g))
			=\tau^{-10}k_t(g),\qquad\tau\neq0
		\end{equation}
		as the homogeneous dimension of $G$ is 10. 

		We first consider generic representations of $G$.
		To this end, suppose $\alpha\in\goe^*$ does not vanish on the center $\goe_{-3}$ of $\goe$.
		Then there exists a unique polarization algebra $\poe_\alpha$ for $\alpha$, i.e., a unique maximal subalgebra satisfying $\goe_\alpha\subseteq\poe_\alpha$ and $\alpha([\poe_\alpha,\poe_\alpha])=0$.
		Indeed, putting $a_i:=\alpha(X_i)$ we have $(a_4,a_5)\neq(0,0)$, the isotropy algebra $\goe_\alpha$ is spanned by $a_5X_1-a_4X_2+a_3X_3,X_4,X_5$, and $\poe_\alpha$ is spanned by $a_5X_1-a_4X_2,X_3,X_4,X_5$.
		Note that $\poe_\alpha$ is an ideal of dimension 4 which is invariant under the grading automorphism $\phi_\tau$.
		The corresponding connected Lie subgroup of $G$ will be denoted by $P_\alpha=\exp(\poe_\alpha)$.

		The irreducible unitary representation $\pi_\alpha$, corresponding to $\alpha$ via Kirillov's orbit method \cite{K62,K04,CG90} acts on the Hilbert space 
		\[
			\mathcal H_\alpha
			=\left\{f\colon G\to\C\middle|\begin{array}{c}\forall p\in P_\alpha,g\in G:f(pg)
			=e^{\mathbf i\alpha(\log p)}f(g)\\|f|\in L^2(P_\alpha\setminus G)\end{array}\right\}
		\]
		by right translations, i.e., $(\pi_\alpha(h)f)(g)=f(gh)$ for $f\in\mathcal H_\alpha$ and $g,h,\in G$.
		It will be convenient to use the complementary vector 
		\begin{equation}\label{E:Zalpha}
			Z_\alpha=\frac1{\sqrt{a_4^2+a_5^2}}\bigl(a_4X_1+a_5X_2\bigr)
		\end{equation}
		to $\poe_\alpha$ in $\goe$ to identify the Hilbert space $\mathcal H_\alpha$ with $L^2(\R)=L^2(\R,dx)$ via $f\leftrightarrow\tilde f$ where $\tilde f(x)=f(\exp(xZ_\alpha))$ for $x\in\R$.
		As $P_\alpha$ is a normal in $G$, for $p\in P_\alpha$ and $y\in\R$, the action of $h=p\exp(yZ_\alpha)$ on $\tilde f\in L^2(\R)$ becomes, cf.~\cite[Eq.~(25)]{D58},
		\begin{equation}\label{E:rep.Kirr}
			\bigl(\pi_\alpha(h)\tilde f\bigr)(x)
			=\tilde f(x+y)e^{\ii\alpha(\Ad_{\exp(xZ_\alpha)}\log p)}.
		\end{equation}
		Combining this with \eqref{E:kt.A}, we conclude that $\pi_\alpha(A)$ has heat kernel
		\begin{multline}\label{E:tkat}
			\tilde k_{\alpha,t}(x,y)
			=\int_{P_\alpha}k_t\bigl(\exp(-yZ_\alpha)p\exp(xZ_\alpha)\bigr)e^{-\ii\alpha(\log p)}dp
			\\
			=\int_{P_\alpha}k_t\bigl(\exp((x-y)Z_\alpha)p\bigr)e^{-\ii\alpha(\Ad_{\exp(xZ_\alpha)}\log p)}dp,
		\end{multline}
		that is,
		\[
			\bigl(e^{-t\pi_\alpha(A)}\tilde f\bigr)(x)
			=\int_{\R}\tilde k_{\alpha,t}(x,y)\tilde f(y)dy
		\]
		for $\tilde f\in L^2(\R)\otimes V$ and $x\in\R$.
		As $k_t$ is in the Schwartz space, the operator $e^{-t\pi_\alpha(A)}$ is trace class on $L^2(\R)$ and 
		\begin{equation}\label{E:tr.pre}
			\tr\left(Be^{-t\pi_\alpha(A)}\right)
			=\int_{-\infty}^\infty\tr\bigl(B\tilde k_{t,\alpha}(x,x)\bigr)dx.
		\end{equation}

		Using the homogeneity in \eqref{E:kt.homog} with $\tau=t^{-1/2\kappa}$ and the fact that the homogeneous dimension of $\poe_\alpha$ is $9$, we obtain from \eqref{E:tkat}
		\[
			\tilde k_{\alpha,t}(x,x)
			=t^{-1/2\kappa}\int_{P_\alpha}k_1(p)e^{-\ii\alpha(\Ad_{\exp(xZ_\alpha)}\phi_{t^{-1/2\kappa}}\log p)}dp.
		\]
		We write this in the form 
		\[
			\tilde k_{\alpha,t}(x,x)
			=t^{-1/2\kappa}\cdot\hat k_\alpha\bigl(\alpha\circ\Ad_{\exp(xZ_\alpha)}\circ\phi_{t^{-1/2\kappa}}|_{\poe_\alpha}\bigr)
		\] 
		where $\hat k_\alpha\in\mathcal S(\poe_\alpha^*)\otimes\eend(V)$, 
		\begin{equation}\label{E:hatkalpha}
			\hat k_\alpha(\xi)
			=\int_{\poe_\alpha}k_1(\exp(X))e^{-\ii\xi(X)}dX,\qquad\xi\in\poe_\alpha^*
		\end{equation}
		denotes the Fourier transform of $k_1\circ\exp|_{\poe_\alpha}\in\mathcal S(\poe_\alpha)\otimes\eend(V)$.
		Hence, 
		\begin{equation}\label{E:traDq}
			\tr\left(Be^{-t\pi_\alpha(A)}\right)
			=t^{-1/2\kappa}\int_{-\infty}^\infty\tr\left(B\hat k_\alpha\bigl(\alpha\circ\Ad_{\exp(xZ_\alpha)}\circ\phi_{t^{-1/2\kappa}}|_{\poe_\alpha}\bigr)\right)dx.
		\end{equation}
		
		We identify $\poe_\alpha^*$ with $\R^4$ using the basis $Z_\alpha',X_3,X_4,X_5$ of $\poe_\alpha$ where
		\begin{equation}\label{E:Z'alpha}
			Z'_\alpha:=\frac1{\sqrt{a_4^2+a_5^2}}\bigl(-a_5X_1+a_4X_2\bigr).
		\end{equation}
		In other words, we are identifying via 
		\begin{equation}\label{E:ident}
			\poe_\alpha^*\cong\R^4,\qquad
			\xi\leftrightarrow\bigl(\xi(Z_\alpha'),\xi(X_3),\xi(X_4),\xi(X_5)\bigr).
		\end{equation}
		Using the commutator relations in \eqref{E:basis.Xi}, \eqref{E:Zalpha}, and \eqref{E:Z'alpha} we get $[Z_\alpha,Z_\alpha']=X_3$ and $[Z_\alpha,X_3]=\frac1{\sqrt{a_4^2+a_5^2}}(a_4X_4+a_5X_5)$.
		As $\Ad_{\exp(xZ_\alpha)}X=X+x[Z_\alpha,X]+\frac{x^2}2[Z_\alpha,[Z_\alpha,X]]$ and $\alpha(X_i)=a_i$, this yields
		\begin{equation}\label{E:qwerty}
			\alpha\circ\Ad_{\exp(xZ_\alpha)}\circ\phi_{t^{-1/2\kappa}}|_{\poe_\alpha}
			=\begin{pmatrix}
				t^{1/2\kappa}\frac{\nu+\bigl(a_3+x\sqrt{a_4^2+a_5^2}\bigr)^2}{2\sqrt{a_4^2+a_5^2}}\\
				t^{2/2\kappa}\bigl(a_3+x\sqrt{a_4^2+a_5^2}\bigr)\\
				t^{3/2\kappa}a_4\\
				t^{3/2\kappa}a_5
			\end{pmatrix}
		\end{equation}
		via the identification in \eqref{E:ident}, where 
		\begin{equation}\label{E:nu}
			\nu=-\bigl(a_3^2+2a_1a_5-2a_2a_4\bigr).
		\end{equation}
		Substituting $t^{1/4\kappa}\bigl(a_3+x\sqrt{a_4^2+a_5^2}\bigr)\leftrightarrow\sqrt[4]{a_4^2+a_5^2}x$ in the integral \eqref{E:traDq} yields 
		\footnote{Equation~\eqref{E:tr.eAt} can alternatively be obtained from a matrix version of Kirillov's character formula, cf.~\cite[Theorems~4.3.1 and 4.3.3]{CG90}.}
		\begin{equation}\label{E:tr.eAt}
			\tr\left(Be^{-t\pi_\alpha(A)}\right)
			=\frac{t^{-3/4\kappa}}{\sqrt[4]{a_4^2+a_5^2}}
			\int_{-\infty}^\infty
			\tr\Biggl(B\hat k_\alpha
			\begin{pmatrix}
				\frac{t^{2/4\kappa}\nu}{2\sqrt{a_4^2+a_5^2}}+\frac{x^2}2\\
				t^{3/4\kappa}\sqrt[4]{a_4^2+a_5^2}x\\
				t^{6/4\kappa}a_4\\
				t^{6/4\kappa}a_5
			\end{pmatrix}
			\Biggr)dx.
		\end{equation}

		By Taylor's theorem,
		\begin{align}
			\notag
			\hat k_\alpha
			&
			\begin{pmatrix}
				\frac{t^{2/4\kappa}\nu}{2\sqrt{a_4^2+a_5^2}}+\frac{x^2}2\\
				t^{3/4\kappa}\sqrt[4]{a_4^2+a_5^2}x\\
				t^{6/4\kappa}a_4\\
				t^{6/4\kappa}a_5
			\end{pmatrix}
			=\sum_{\substack{i_1,i_2,i_3,i_4\in\N_0\\i_1+i_2+i_3+i_4\leq N}}
			t^{(2i_1+3i_2+6i_3+6i_4)/4\kappa}
			\\\notag
			&\qquad\cdot(\nu/2)^{i_1}x^{i_2}(a_4^2+a_5^2)^{(i_2-2i_1)/4}a_4^{i_3}a_5^{i_4}
			\frac{\partial_1^{i_1}\partial_2^{i_2}\partial_3^{i_3}\partial_4^{i_4}\hat k_\alpha}{i_1!i_2!i_3!i_4!}\begin{pmatrix}x^2/2\\0\\0\\0\end{pmatrix}
			\\\notag
			&+\sum_{\substack{i_1,i_2,i_3,i_4\in\N_0\\i_1+i_2+i_3+i_4=N+1}}
			t^{(2i_1+3i_2+6i_3+6i_4)/4\kappa}(\nu/2)^{i_1}x^{i_2}(a_4^2+a_5^2)^{(i_2-2i_1)/4}a_4^{i_3}a_5^{i_4}
			\\\label{E:Taylor.00}
			&\qquad\cdot(N+1)\int_0^1(1-\xi)^N\frac{\partial_1^{i_1}\partial_2^{i_2}\partial_3^{i_3}\partial_4^{i_4}\hat k_\alpha}{i_1!i_2!i_3!i_4!}
			\begin{pmatrix}
				\xi\frac{t^{2/4\kappa}\nu}{2\sqrt{a_4^2+a_5^2}}+\frac{x^2}2\\
				\xi t^{3/4\kappa}\sqrt[4]{a_4^2+a_5^2}x\\
				\xi t^{6/4\kappa}a_4\\
				\xi t^{6/4\kappa}a_5
			\end{pmatrix}
			d\xi.
		\end{align}
		As $\hat k_\alpha$ is a Schwartz function, 
		\[
			x^{i_2}
			\frac{\partial_1^{i_1}\partial_2^{i_2}\partial_3^{i_3}\partial_4^{i_4}\hat k_\alpha}{i_1!i_2!i_3!i_4!}
				\begin{pmatrix}
					\xi\frac{t^{2/4\kappa}\nu}{2\sqrt{a_4^2+a_5^2}}+\frac{x^2}2\\
					\xi t^{3/4\kappa}\sqrt[4]{a_4^2+a_5^2}x\\
					\xi t^{6/4\kappa}a_4\\
					\xi t^{6/4\kappa}a_5
				\end{pmatrix}
			=O\left(\frac1{1+x^2}\right),
		\]
		uniformly for $0<t\leq1$, $x\in\R$, and $0\leq\xi\leq1$.
		Combining this estimate with \eqref{E:tr.eAt} and \eqref{E:Taylor.00}, we obtain  
		\begin{equation}\label{E:tr.pi.alpha}
			\tr\left(Be^{-t\pi_\alpha(A)}\right)
			=\sum_{j=0}^Nt^{(j-3)/4\kappa}\cdot a_{A,B,\alpha,j}+O\left(t^{(N-2)/4\kappa}\right)
		\end{equation}
		as $t\to0$ with
		\begin{multline}\label{E:aqbj}
			a_{A,B,\alpha,j}
			=\sum_{\substack{i_1,i_2,i_3,i_4\in\N_0\\2i_1+3i_2+6i_3+6i_4=j}}
			(\nu/2)^{i_1}(a_4^2+a_5^2)^{(i_2-2i_1-1)/4}a_4^{i_3}a_5^{i_4}
			\\
			\cdot\int_{-\infty}^\infty
			\tr\Biggl(B\frac{\partial_1^{i_1}\partial_2^{i_2}\partial_3^{i_3}\partial_4^{i_4}\hat k_\alpha}{i_1!i_2!i_3!i_4!}
			\begin{pmatrix}x^2/2\\0\\0\\0\end{pmatrix}
			\Biggr)x^{i_2}dx.
		\end{multline}
		The integral vanishes for odd $i_2$ as the integrand is an odd function of $x$.
		Hence, $a_{A,B,\alpha,j}$ vanishes for odd $j$.
		
		Furthermore, as $\hat k_\alpha$ is in the Schwartz space, for each $M$
		\[
			\hat k_\alpha
			\begin{pmatrix}
				\frac{t^{2/4\kappa}\nu}{2\sqrt{a_4^2+a_5^2}}+\frac{x^2}2\\
				t^{3/4\kappa}\sqrt[4]{a_4^2+a_5^2}x\\
				t^{6/4\kappa}a_4\\
				t^{6/4\kappa}a_5
			\end{pmatrix}
			=O\left(\frac{t^{-M}}{1+x^2}\right)
		\]
		uniformly for $x\in\R$ and $1\leq t<\infty$.
		Combining this estimate with \eqref{E:tr.eAt}, we get
		\begin{equation}\label{E:tr.pi.alpha.infty}
			\tr\left(Be^{-t\pi_\alpha(A)}\right)
			=O\left(t^{-M}\right)
		\end{equation}
		as $t\to\infty$, for each $M$.

		For the infinitesimal representation we obtain from \eqref{E:rep.Kirr} and \eqref{E:qwerty} 
		\[
			\pi_\alpha(Z_\alpha)=\partial_x,\qquad
			\pi_\alpha(Z_\alpha')=\frac{\mathbf i}{\sqrt{a_4^2+a_5^2}}\cdot\frac{\nu+\left(a_3+x\sqrt{a_4^2+a_5^2}\right)^2}2,
		\]
		and
		\[
			\pi_\alpha(X_3)=\mathbf i\left(a_3+x\textstyle\sqrt{a_4^2+a_5^2}\right),\qquad
			\pi_\alpha(X_4)=\mathbf ia_4,\qquad
			\pi_\alpha(X_5)=\mathbf ia_5.
		\]
		As $X_1=\frac1{\sqrt{a_4^2+a_5^2}}\bigl(a_4Z_\alpha-a_5Z_\alpha'\bigr)$ and $X_2=\frac1{\sqrt{a_4^2+a_5^2}}\bigl(a_5Z_\alpha+a_4Z_\alpha'\bigr)$, this also yields
		\begin{align*}
			\pi_\alpha(X_1)
			&=\frac{a_4}{\sqrt{a_4^2+a_5^2}}\partial_x-\frac{\mathbf ia_5}{a_4^2+a_5^2}\cdot\frac{\nu+\left(a_3+x\sqrt{a_4^2+a_5^2}\right)^2}2,
			\\
			\pi_\alpha(X_2)
			&=\frac{a_5}{\sqrt{a_4^2+a_5^2}}\partial_x+\frac{\mathbf ia_4}{a_4^2+a_5^2}\cdot\frac{\nu+\left(a_3+x\sqrt{a_4^2+a_5^2}\right)^2}2,
		\end{align*}
		and
		\[
			\pi_\alpha\bigl(X_3X_3+2X_1X_5-2X_2X_4\bigr)=\nu.
		\]
		Hence, $\pi_\alpha$ is unitarily equivalent to the representation $\rho_{\lambda,\mu,\nu}$ with $\lambda=a_4$, $\mu=a_5$ and $\nu$ given by \eqref{E:nu}, cf.~\eqref{E:rep.gen.X1}--\eqref{E:rep.gen.X5}.
		The unitary intertwiner is the one corresponding to the affine reparametrization $\theta\sqrt[3]{a_4^2+a_5^2}=a_3+x\sqrt{a_4^2+a_5^2}$.
		This shows parts (III) and (I) for generic representations, cf.~\eqref{E:tr.pi.alpha} and \eqref{E:tr.pi.alpha.infty}.

		Let us now turn to Schr\"odinger representations.
		To this end, suppose $\alpha\in\goe^*$ is such that $a_4=0=a_5$ and $a_3\neq0$.
		We now consider 
		\begin{equation}\label{E:ZZp}
			Z_\alpha:=z_1X_1+z_2X_2
			\qquad\text{and}\qquad
			Z'_\alpha:=-z_2X_1+z_1X_2
		\end{equation}
		where $z_1,z_2$ are any real numbers such that $z_1^2+z_2^2=1$.
		Then $Z'_\alpha,X_3,X_4,X_5$ span a polarization algebra $\poe_\alpha$ for $\alpha$, and equation \eqref{E:traDq} remains valid with $\hat k_\alpha$ as in \eqref{E:hatkalpha}.
		Using the aforementioned basis of $\poe_\alpha$ to identify $\poe^*_\alpha$ with $\R^4$, we find
		\begin{equation}\label{E:qwerty2}
			\alpha\circ\Ad_{\exp(xZ_\alpha)}\circ\phi_{t^{-1/2\kappa}}|_{\poe_\alpha}
			=\begin{pmatrix}
				t^{1/2\kappa}\bigl(-z_2a_1+z_1a_2+a_3x\bigr)\\
				t^{1/\kappa}a_3\\
				0\\
				0
			\end{pmatrix}.
		\end{equation}
		Substituting $t^{1/2\kappa}\bigl(-z_2a_1+z_1a_2+a_3x\bigr)\leftrightarrow x$ in the integral \eqref{E:traDq}, we obtain
		\begin{equation}\label{E:tr.eAt.h}
			\tr\left(Be^{-t\pi_\alpha(A)}\right)
			=\frac{t^{-1/\kappa}}{|a_3|}
			\int_{-\infty}^\infty
			\tr\Biggl(B\hat k_\alpha
			\begin{pmatrix}
				x\\
				t^{1/\kappa}a_3\\
				0\\
				0
			\end{pmatrix}
			\Biggr)dx.
		\end{equation}

		By Taylor's theorem,
		\begin{multline}\label{E:Taylor.00.h}
			\hat k_\alpha
			\begin{pmatrix}x\\t^{1/\kappa}a_3\\0\\0\end{pmatrix}
			=\sum_{j=0}^Nt^{j/\kappa}a_3^j
			\frac{\partial_2^j\hat k_\alpha}{j!}\begin{pmatrix}x\\0\\0\\0\end{pmatrix}
			\\
			+t^{(N+1)/\kappa}a_3^{N+1}
			\int_0^1(1-\xi)^N\frac{\partial_2^{N+1}\hat k_\alpha}{N!}
			\begin{pmatrix}x\\\xi t^{1/\kappa}a_3\\0\\0\end{pmatrix}
			d\xi.
		\end{multline}
		As $\hat k_\alpha$ is in the Schwartz space, 
		\[
			\partial_2^{N+1}\hat k_\alpha
			\begin{pmatrix}x\\\xi t^{1/\kappa}a_3\\0\\0\end{pmatrix}
			=O\left(\frac1{1+x^2}\right),
		\]
		uniformly for $0<t\leq1$, $x\in\R$, and $0\leq\xi\leq1$.
		Combining this estimate with \eqref{E:tr.eAt.h} and \eqref{E:Taylor.00.h}, we obtain  
		\begin{equation}\label{E:tr.pi.alpha.h}
			\tr\left(Be^{-t\pi_\alpha(A)}\right)
			=\sum_{j=0}^Nt^{(j-1)/\kappa}\cdot d_{A,B,\alpha,j}+O\left(t^{N/\kappa}\right)
		\end{equation}
		as $t\to0$ with
		\begin{equation}\label{E:dABalphaj}
			d_{A,B,\alpha,j}
			=\frac{a_3^j}{|a_3|}
			\int_{-\infty}^\infty
			\tr\Biggl(B\frac{\partial_2^j\hat k_\alpha}{j!}
			\begin{pmatrix}x\\0\\0\\0\end{pmatrix}
			\Biggr)dx.
		\end{equation}

		Furthermore, as $\hat k_\alpha$ is a Schwartz function, for each $M$
		\[
			\hat k_\alpha
			\begin{pmatrix}x\\t^{1/\kappa}a_3\\0\\0\end{pmatrix}
			=O\left(\frac{t^{-M}}{1+x^2}\right),
		\]
		uniformly for $x\in\R$ and $1\leq t<\infty$.
		Combining this estimate with \eqref{E:tr.eAt.h}, we get
		\begin{equation}\label{E:tr.pi.alpha.infty.h}
			\tr\left(Be^{-t\pi_\alpha(A)}\right)
			=O\left(t^{-M}\right)
		\end{equation}
		as $t\to\infty$, for each $M$.
	
		From \eqref{E:rep.Kirr} and \eqref{E:qwerty2} one readily derives $\pi_\alpha(X_3)=\mathbf ia_3$ and $\pi_\alpha(X_4)=0=\pi_\alpha(X_5)$.
		Hence, $\pi_\alpha$ is unitarily equivalent to the Schr\"odinger representation $\rho_\hbar$ with $\hbar=a_3$, cf.~\eqref{E:Schroedinger}.
		This shows parts (II) and (I) for Schr\"odinger representations, cf.~\eqref{E:tr.pi.alpha.h} and \eqref{E:tr.pi.alpha.infty.h}.

		It remains to check part (I) for nontrivial scalar representations.
		As these representations are 1-dimensional, the statement follows immediately from the fact that all eigenvalues of $\rho(A)$ are strictly positive.
	\end{proof}

	\begin{remark}
		If $\nu=0$, then $a_{A,B,\alpha,j}$ in \eqref{E:aqbj} vanishes unless $j$ is a multiple of $6$, and we obtain from \eqref{E:asymp.gen} an asymptotic expansion of the form
		\[
			\tr\left(e^{-t\rho_{\lambda,\mu,0}(A)}\right)
			\sim\sum_{j=0}^\infty t^{3(2j-1)/4\kappa}a_{6j}
		\]
		as $t\to0$, consistent with \cite[Eq.~(14)]{V80} and \cite[Eq.~(5.2)]{V92}.
	\end{remark}

	The homogeneity in \eqref{E:kt.homog} with $\tau=-1$ gives $k_t(\Phi_{-1}(g))=k_t(g)$ and therefore 
	\begin{equation}\label{E:homog-1}
		\hat k_\alpha\begin{pmatrix}-x_1\\x_2\\-x_3\\-x_4\end{pmatrix}
		=\hat k_\alpha\begin{pmatrix}x_1\\x_2\\x_3\\x_4\end{pmatrix}
	\end{equation}
	cf.~\eqref{E:hatkalpha} and \eqref{E:ident}.
	This symmetry will be used in the subsequent section.

	Let us close this section with a

	\begin{proof}[{Proof of Theorem~\ref{T:zeta}}]
		Let $\rho$ be a nontrivial irreducible unitary representation of $G$.
		Recall that the Rumin--Seshadri operator $\Delta_{h,q}$ in \eqref{E:Rumin.Seshadri} is a positive Rockland operator, cf.~\cite[Lemma~2.14]{DH22}.
		Applying Theorem~\ref{T:asymp} with $A=\Delta_{h,q}$ and $B=\id$, we see that
		\begin{equation}\label{E:Melin.Delta}
			\zeta_{\rho(\Delta_{h,q})}(s)
			=\tr\rho(\Delta_{h,q})^{-s}
			=\frac1{\Gamma(s)}\int_0^\infty t^{s-1}\tr\left(e^{-t\rho(\Delta_{h,q})}\right)dt
		\end{equation}
		converges for $\Re s$ sufficiently large, and that this zeta function extends to a meromorphic function on the entire complex plane which has only simple poles and is holomorphic at $s=0$.
		Proceeding recursively using \eqref{E:zeta.Delta}, we conclude that the zeta function $\zeta_{|\rho(D_q)|_h}(s)$ has the same properties, whence part (I).

		Suppose $\rho=\rho_\hbar$ is a Schr\"odinger representation.
		Using the expansion \eqref{E:asymp.Schroedinger}, we conclude that \eqref{E:Melin.Delta} converges for $\Re s>1/\kappa$ and that the poles of this zeta function can only be located at $s=(1-j)/\kappa$ with $1\neq j\in\N_0$.
		Proceeding recursively, using \eqref{E:zeta.Delta} and $a_qk_q=\kappa$, we obtain part (II).

		For generic representations $\rho=\rho_{\lambda,\mu,\nu}$ we may proceed analogously using the expansion \eqref{E:asymp.gen} to obtain part (III).
	\end{proof}

\section{Polyhomogeneous expansion of the heat trace}\label{S:poly.heat}

	In this section we consider a family of generic representations $\rho_{r\lambda,r\mu,\nu}$ of $G$ parametrized by $r>0$.
	For a positive left invariant homogeneous Rockland differential operator $A$ on $G$ and $\nu<0$ we will establish the structure of the polyhomogeneous expansion of the heat trace $\tr\left(e^{-t\rho_{r\lambda,r\mu,\nu}(A)}\right)$ as $(r,t)\to(0,0)$ and as $(r,t)\to(0,\infty)$.
	More precisely, we aim at proving the following result.

	\begin{theorem}\label{T:poly}
		Let $V$ finite dimensional complex vector space equipped with a Hermitian inner product, and suppose $A=A^*\in\mathcal U^{-2\kappa}(\goe)\otimes\eend(V)$ is a left invariant positive Rockland differential operator on the 5-dimensional Lie group $G$ which is homogeneous of order $2\kappa>0$ with respect to the grading automorphism.
		Let $\rho_{\lambda,\mu,\nu}$ be a generic representation of $G$ with $\nu<0$.
		Then:
		\begin{enumerate}[(a)]
			\item	$t^{3/4\kappa}\sqrt r\tr\left(e^{-t\rho_{r\lambda,r\mu,\nu}(A)}\right)$ extends smoothly in the variables $\bigl(\frac{t^{1/4\kappa}}{\sqrt r},r^2\bigr)\in[0,\infty)^2$.
			\item	$t^{1/\kappa}\tr\left(e^{-t\rho_{r\lambda,r\mu,\nu}(A)}\right)$ extends smoothly in the variables $\bigl(\frac r{t^{1/2\kappa}},t^{1/\kappa}\bigr)\in[0,\infty)^2$.
		\end{enumerate}
		There exist smooth functions $a_j(r)$, $c_k(y)$, $e_l(t)$ and constants $b_{j,k}$ and $d_{k,l}$ such that for any integer $N$, real numbers $T>0$, $\varepsilon>0$ and $M$ we have, with $y=\frac{t^{1/4\kappa}}{\sqrt r}$:
		\begin{align}
			\label{E:tr.a}
			t^{3/4\kappa}\sqrt r\tr\left(e^{-t\rho_{r\lambda,r\mu,\nu}(A)}\right)
			&=\textstyle\sum_{j=0}^Ny^jr^2a_j(r)
			+O\left(y^{N+1}\right)
                        \\
			\label{E:tr.b}
			t^{3/4\kappa}\sqrt r\tr\left(e^{-t\rho_{r\lambda,r\mu,\nu}(A)}\right)
			&=\textstyle\sum_{j+k\leq N}r^{2k}y^jb_{j,k}
			+O\left(\left(r^2+y\right)^{N+1}\right)
			\\
			\label{E:tr.c}
			t^{3/4\kappa}\sqrt r\tr\left(e^{-t\rho_{r\lambda,r\mu,\nu}(A)}\right)
			&=\textstyle\sum_{k=0}^Nr^{2k}y^3c_k(y^{4\kappa})
			+O\bigl(r^{2(N+1)}\bigr)
		\end{align}
		uniformly for $0<y\leq T$, $0<r\leq1$;
		\begin{align}
			\label{E:tr.tc}
			t^{1/\kappa}\tr\left(e^{-t\rho_{r\lambda,r\mu,\nu}(A)}\right)
			&=\textstyle\sum_{k=0}^Nt^{k/\kappa}y^{4-4k}c_k(y^{4\kappa})
			+O\bigl(t^{(N+1)/\kappa}\bigr)
			\\
			\label{E:tr.d}
			t^{1/\kappa}\tr\left(e^{-t\rho_{r\lambda,r\mu,\nu}(A)}\right)
			&=\textstyle\sum_{k+l\leq N}t^{k/\kappa}y^{-2l}d_{k,l}
			+O\left(\left(t^{1/\kappa}+y^{-2}\right)^{N+1}\right)
			\\
			\label{E:tr.e}
			t^{1/\kappa}\tr\left(e^{-t\rho_{r\lambda,r\mu,\nu}(A)}\right)
			&=\textstyle\sum_{l=0}^Ny^{-2l}t^{(l/2+1)/\kappa}e_l(t)
			+O\left(y^{-2(N+1)}\right)
		\end{align}
		uniformly for $\varepsilon\leq y<\infty$, $0<t\leq T$; and
		\begin{align}
			\label{E:tr.infty}
			\tr\left(e^{-t\rho_{r\lambda,r\mu,\nu}(A)}\right)
			&=\textstyle\sum_{l=0}^Nr^le_l(t)
			+O\left(r^{N+1}t^{-M}\right)
		\end{align}
		uniformly for $\varepsilon\leq t<\infty$, $0<r\leq1$.
		Furthermore,
		\begin{align}
			\label{E:a.b}
			r^2a_j(r)&\textstyle=\sum_{k=0}^Nr^{2k}b_{j,k}+O\left(r^{2(N+1)}\right)&&0<r\leq1
			\\
			\label{E:c.b}
			y^3c_k(y^{4\kappa})&\textstyle=\sum_{j=0}^Ny^jb_{j,k}+O\left(y^{N+1}\right)&&0<y\leq T
			\\
			\label{E:c.d}
			y^{4-4k}c_k(y^{4\kappa})&\textstyle=\sum_{l=0}^Ny^{-2l}d_{k,l}+O\left(y^{-2(N+1)}\right)&&\varepsilon\leq y<\infty
			\\
			\label{E:e.d}
			t^{(l/2+1)/\kappa}e_l(t)&\textstyle=\sum_{k=0}^Nt^{k/\kappa}d_{k,l}+O\left(t^{(N+1)/\kappa}\right)&&0<t\leq T
			\\
			\label{E:e.infty}
			e_l(t)&\textstyle=O\left(t^{-M}\right)&&\varepsilon\leq t<\infty
		\end{align}
		The estimates are all uniform in the indicated range for $r$ and $t$, the implicit constants, however, depend on $A$, $\lambda$, $\mu$, $\nu$, $\varepsilon$, $T$, $M$, and $N$.
		Moreover, $a_j$, $b_{j,k}$ vanish if $j$ is odd; $b_{j,k}$, $c_k$, $d_{k,l}$ vanish if $k$ is odd; $d_{k,l}$, $e_l$ vanish if $l$ is odd; and $b_{j,k}$ vanishes if $3k>j$.
		Finally,
		\begin{equation}\label{E:e0}
			e_0(t)=\tr\left(e^{-t\rho_\hbar(A)}\right)+\tr\left(e^{-t\rho_{-\hbar}(A)}\right)
		\end{equation}
		where $\rho_{\pm\hbar}$ denotes the Schr\"odinger representation with $\hbar=\sqrt{|\nu|}$.
	\end{theorem}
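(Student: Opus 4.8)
\emph{Setup.} The essential tool is the ``Kirillov character'' formula for the heat trace obtained in the proof of Theorem~\ref{T:asymp}. Specialising \eqref{E:tr.eAt} (with $B=\id$) to the coadjoint data $a_4=r\lambda$, $a_5=r\mu$ and fixed $\nu<0$, and abbreviating $L=\sqrt{\lambda^2+\mu^2}$, one gets
\[
  \tr\bigl(e^{-t\rho_{r\lambda,r\mu,\nu}(A)}\bigr)
  =\frac{t^{-1/2\kappa}}{rL}\int_{-\infty}^\infty\tr\,\hat k\Bigl(\tfrac{t^{1/2\kappa}(s^2+\nu)}{2rL},\ t^{1/\kappa}s,\ t^{3/2\kappa}r\lambda,\ t^{3/2\kappa}r\mu\Bigr)\,ds,
\]
where $\hat k\in\mathcal S(\R^4)\otimes\eend(V)$ is the Fourier transform of $k_1\circ\exp$ restricted to the polarising subalgebra. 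The decisive point is that this subalgebra depends only on the ratio $(\lambda:\mu)$, so $\hat k$ is one fixed Schwartz function, independent of $r$ and $t$; the whole proof amounts to substituting suitable variables and Taylor expanding, using the rapid decay of $\hat k$ both to differentiate under the integral and to bound the tails.

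\emph{The turning-point regime.} Substituting $s=\sigma\sqrt L/y$ with $y=t^{1/4\kappa}/\sqrt r$ turns the integral into $\frac1{r^2y^3\sqrt L}\int\tr\,\hat k\bigl(\tfrac{\sigma^2}2+\tfrac{\nu y^2}{2L},\ y^3r^2\sqrt L\,\sigma,\ y^6r^4\lambda,\ y^6r^4\mu\bigr)\,d\sigma$, so that $t^{3/4\kappa}\sqrt r\,\tr=\tfrac1{\sqrt L}\int\tr\,\hat k(\dots)\,d\sigma$ is visibly a smooth function $F(y,r^2)$ on $[0,\infty)^2$, which is part~(a). Taylor expanding $F$ in $y$, in $r^2$, or jointly produces \eqref{E:tr.a}, \eqref{E:tr.c}, \eqref{E:tr.b}, with $r^2a_j(r)$, the functions $y^3c_k(y^{4\kappa})$, and the constants $b_{j,k}$ as the respective Taylor coefficients; the relations \eqref{E:a.b} and \eqref{E:c.b} then express only that a joint expansion refines the partial ones. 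Since an $(r^2)^k$ term of $F$ arises from differentiating $\hat k$ in its last three slots with multiplicities $m_2+2m_3+2m_4=k$, it carries $y$-exponent $3m_2+6m_3+6m_4=3k$ together with a nonnegative even exponent from the first slot; hence $b_{j,k}$ vanishes unless $j-3k$ is a nonnegative even integer, which yields all the support and parity constraints on $b_{j,k}$, and thereby on $a_j$ and $c_k$. (That $F$ is even in $y$ is also immediate, since $\sigma\mapsto-\sigma$ combined with $y\mapsto-y$ fixes the integrand.)

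\emph{The two-well regime.} Because $\nu<0$, the first argument of $\hat k$ vanishes along $s=\pm\sqrt{|\nu|}$. With $z=r/t^{1/2\kappa}$ and $\tau=t^{1/\kappa}$ the formula reads $t^{1/\kappa}\tr=\tfrac1{zL}\int\tr\,\hat k\bigl(\tfrac{s^2-|\nu|}{2zL},\ \tau s,\ z\tau^2\lambda,\ z\tau^2\mu\bigr)\,ds$. Fix a partition of unity $1=\psi_++\psi_-+\psi_0$ with $\psi_\pm$ a bump near $s=\pm\sqrt{|\nu|}$ and $\psi_0$ vanishing near both. On $\supp\psi_0$ the first argument exceeds $c/z$ in modulus, so the $\psi_0$-piece is $O(z^\infty)$ with the required decay in $\tau$; in the $\psi_\pm$-pieces the substitution $s=\pm\sqrt{|\nu|}+zLv$, after replacing the cutoff by $1$ modulo $O(z^\infty)$, leaves
\[
  I_\pm(z,\tau)=\int_{-\infty}^\infty\tr\,\hat k\Bigl(\pm\sqrt{|\nu|}\,v+\tfrac{zLv^2}{2},\ \pm\tau\sqrt{|\nu|}+\tau zLv,\ z\tau^2\lambda,\ z\tau^2\mu\Bigr)\,dv,
\]
smooth in $(z,\tau)$ on $[0,\infty)^2$, with $t^{1/\kappa}\tr=I_++I_-+O(z^\infty)$; this is part~(b). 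Setting $z=0$ and comparing with the Schr\"odinger trace formula \eqref{E:tr.eAt.h} (with the same $\hat k$, using the freedom in the polarising subalgebra, and $a_3=\pm\sqrt{|\nu|}$) gives $I_\pm(0,\tau)=t^{1/\kappa}\tr\bigl(e^{-t\rho_{\pm\hbar}(A)}\bigr)$ with $\hbar=\sqrt{|\nu|}$, and Taylor expanding $I_++I_-$ in $(z,\tau)$, in $z$, and in $\tau$ yields \eqref{E:tr.d}, \eqref{E:tr.e}, \eqref{E:tr.tc}, the identification \eqref{E:e0}, and the relations \eqref{E:c.d}, \eqref{E:e.d}. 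The identities $I_+(z,-\tau)=I_-(z,\tau)$ and $I_-(-z,\tau)=I_+(z,-\tau)$ — the latter using the symmetry \eqref{E:homog-1} together with $v\mapsto-v$ — show $I_++I_-$ is even in both $z$ and $\tau$, whence the vanishing of $d_{k,l}$ and $e_l$ for odd indices. Finally the range $\varepsilon\le t<\infty$, $r\to0$, is treated the same way by substituting $s=\pm\sqrt{|\nu|}+rLv$ directly in the original formula: the off-well part is $O(r^\infty)$ with a $t^{-M}$ factor, the well contributions again converge to $\tr\bigl(e^{-t\rho_{\pm\hbar}(A)}\bigr)$, expansion in $r$ gives \eqref{E:tr.infty}, the estimate \eqref{E:e.infty} follows from Theorem~\ref{T:asymp}(I), and $r\mapsto-r$ with $v\mapsto-v$ and \eqref{E:homog-1} recovers \eqref{E:e0} and the parity of $e_l$.

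\emph{Main difficulty.} The obstacle is organisational rather than conceptual. One must carry out all three substitutions, verify that the remainders are uniform over the stated (overlapping) ranges of $(r,t)$ — in particular that the contributions away from the two wells are $O(r^\infty)$ with the correct $t$-decay, even though the relevant $v$-integration ranges and the cutoffs genuinely depend on $z$ — and then check that the five coefficient families $a_j,b_{j,k},c_k,d_{k,l},e_l$ obtained from the different regimes are mutually consistent. Consistency is forced by uniqueness of asymptotic and Taylor expansions on the overlaps; tracking the exponents, and the parities coming from the symmetry \eqref{E:homog-1}, is where the actual work lies.
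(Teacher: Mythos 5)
Your turning‑point analysis (part (a)) is essentially the paper's own argument and is fine. The genuine problem lies in the two‑well regime. After localizing with $1=\psi_++\psi_-+\psi_0$ and substituting $s=\pm\sqrt{|\nu|}+zLv$ (in your notation $L=\sqrt{\lambda^2+\mu^2}$, $z=r/t^{1/2\kappa}$), you claim the cutoff may be replaced by $1$ ``modulo $O(z^\infty)$''. That step fails: the substitution $s=\sqrt{|\nu|}+zLv$ is a global affine bijection of $\R$, so dropping the cutoff in the $+$ chart re‑includes the second well, which sits at $v_0=-2\sqrt{|\nu|}/(zL)$; near $v_0$ the first slot of $\hat k$ is of size $O(|v-v_0|)$ and the second slot is close to $-\tau\sqrt{|\nu|}$, so the integrand is $O(1)$ on a $v$‑interval of fixed length and the error incurred is $O(1)$ — essentially the other Schr\"odinger trace — not $O(z^\infty)$. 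Consequently, for $z>0$ each of your cutoff‑free $I_\pm(z,\tau)$ equals the \emph{full} quantity $t^{1/\kappa}\tr\bigl(e^{-t\rho_{r\lambda,r\mu,\nu}(A)}\bigr)$, the displayed identity $t^{1/\kappa}\tr=I_++I_-+O(z^\infty)$ is off by a factor of two, and the cutoff‑free $I_\pm$ are not even continuous at $z=0$ (their $z\to0^+$ limit is the sum of both Schr\"odinger traces, whereas their value at $z=0$ is a single one), so the claimed smoothness on $[0,\infty)^2$ fails for them as written.

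The gap is repairable and your strategy is otherwise sound: keep the cutoffs in the definition of $I_\pm$. On $\supp\psi_\pm(\pm\sqrt{|\nu|}+zLv)$ one has $|\pm\sqrt{|\nu|}v+zLv^2/2|\geq c|v|$ uniformly in $z$, so the cutoff integrals are smooth on $[0,\infty)^2$; every term in which a derivative hits the cutoff is supported where $|zLv|\sim 1$ and is $O(z^\infty)$, so the Taylor coefficients at $z=0$ are indeed computed from your cutoff‑free integrands, and $I_\pm(0,\tau)$ is the $\pm$ Schr\"odinger trace, giving \eqref{E:e0}. (The paper achieves the same separation differently, substituting $u=\tfrac{t^{1/2\kappa}\nu}{2r}+\tfrac{x^2}2$, which splits the integral into the two square‑root branches and then handles the resulting Jacobian singularity by a further splitting $f=f_1+f_2$; your localization avoids that singularity but must keep the cutoffs honest.) Two smaller points: (i) the identical vanishing of $c_k$ for odd $k$ (used later for \eqref{E:const}) does not follow from the vanishing of the Taylor coefficients $b_{j,k}$ alone; you need the symmetry $\sigma\mapsto-\sigma$ by itself, which gives $F(y,-v)=F(y,v)$, i.e.\ evenness in $v=r^2$, so odd $v$‑derivatives of $F$ vanish identically in $y$; (ii) \eqref{E:e.infty} for $l\geq1$ does not follow from Theorem~\ref{T:asymp}(I), which concerns the heat trace in a fixed representation rather than the expansion coefficients; it follows from the explicit formula for $e_l(t)$, whose integrand involves $\hat k$ evaluated with second slot $\pm\sqrt{|\nu|}\,t^{1/\kappa}$ and therefore decays rapidly in $t$.
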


	\begin{proof}
		W.l.o.g.\ we may assume $\lambda^2+\mu^2=1$.
		For $r>0$ let $\alpha_r\in\goe^*$ be such that $\alpha_r(X_1)=0=\alpha_r(X_2)$, $\alpha_r(X_3)=\sqrt{|\nu|}$, and $\alpha_r(X_4)=r\lambda$, $\alpha_r(X_5)=r\mu$.
		Put $\alpha:=\alpha_1$, and note that the vectors $Z_{\alpha_r}=Z_\alpha$ in \eqref{E:Zalpha} and $Z'_{\alpha_r}=Z'_\alpha$ in \eqref{E:Z'alpha} are independent of $r$.
		In particular, the polarization algebra $\poe_{\alpha_r}=\poe_\alpha$ does not depend on $r$ either.
		As observed in the preceding section, the representation $\pi_{\alpha_r}$ associated to $\alpha_r$ via Kirillov's orbit method is unitarily equivalent to the generic representation $\rho_{r\lambda,r\mu,\nu}$, cf.~\eqref{E:nu}.
		From \eqref{E:tr.eAt} we thus obtain
		\begin{equation}\label{E:tr.k.alpha}
			\tr\left(e^{-t\rho_{r\lambda,r\mu,\nu}(A)}\right)
			=\frac{t^{-3/4\kappa}}{\sqrt r}\int_{-\infty}^\infty\tr\hat k_\alpha
			\begin{pmatrix}t^{2/4\kappa}\nu/2r+x^2/2\\t^{3/4\kappa}\sqrt rx\\t^{6/4\kappa}r\lambda\\t^{6/4\kappa}r\mu\end{pmatrix}dx
		\end{equation}
		where $\hat k_\alpha=\hat k_{\alpha_r}\in\mathcal S(\R^4)\otimes\eend(V)$ is independent of $r$ and $\nu$.

		Let $\alpha_0^\pm\in\goe^*$ be such that $\alpha_0^\pm(X_1)=0=\alpha_0^\pm(X_2)$, $\alpha_0^\pm(X_3)=\pm\sqrt{|\nu|}$, and $\alpha_0^\pm(X_4)=0=\alpha_0^\pm(X_5)$.
		In the preceding section we have seen that the associated representation $\pi_{\alpha_0^\pm}$ is unitarily equivalent to the Schr\"odinger representation $\rho_{\pm\sqrt{|\nu|}}$.
		Note that $\poe_\alpha$ also is a polarization algebra for $\alpha_0^\pm$, and we may use the same complementary vectors $Z=Z_\alpha$ and $Z'=Z_\alpha'$ for $\alpha_0^\pm$, cf.~\eqref{E:ZZp}.
		From \eqref{E:tr.eAt.h} we thus obtain
		\begin{equation}\label{E:tr.k.alpha.h}
			\tr\left(e^{-t\rho_{\pm\sqrt{|\nu|}}(A)}\right)
			=\frac{t^{-1/\kappa}}{\sqrt{|\nu|}}
			\int_{-\infty}^\infty
			\tr\hat k_\alpha
			\begin{pmatrix}
				x\\
				\pm t^{1/\kappa}\sqrt{|\nu|}\\
				0\\
				0
			\end{pmatrix}
			dx
		\end{equation}
		where the $\hat k_\alpha$ in the integrand is the same as in \eqref{E:tr.k.alpha}.

		From \eqref{E:tr.k.alpha} we have
		\begin{equation}\label{E:tr.f}
			t^{3/4\kappa}\sqrt r\tr\left(e^{-t\rho_{r\lambda,r\mu,\nu}(A)}\right)
			=\tilde f\left(\tfrac{t^{1/4\kappa}}{\sqrt r},r^2\right)
		\end{equation}
		where 
		\begin{equation}\label{E:f.oo}
			\tilde f(y,v)
			=\int_{-\infty}^\infty
			\tr\hat k_\alpha
			\begin{pmatrix}\nu y^2/2+x^2/2\\vy^3x\\v^2y^6\lambda\\v^2y^6\mu\end{pmatrix}
			dx.
		\end{equation}
		We will show that the function $\tilde f(y,v)$ is smooth in $(y,v)\in\R^2$.

		To this end, we consider slightly more general functions of the form
		\[
			\tilde g(y,v)
			=p(y,v)\int_{-\infty}^\infty 
			k\begin{pmatrix}\nu y^2/2+x^2/2\\vy^3x\\v^2y^6\lambda\\v^2y^6\mu\end{pmatrix}
			dx
		\]
		where $p(y,v)$ is a polynomial, $k\in\mathcal S(\R^4)$ and $y,v\in\R$.
		The integral converges absolutely since $k$ is in the Schwartz space.
		Moreover, the partial derivatives $\tfrac{\partial\tilde g}{\partial y}(y,v)$ and $\tfrac{\partial\tilde g}{\partial v}(y,v)$ are functions of the same form as $\tilde g(y,v)$.
		Indeed, differentiation under the integral is justified by the uniform and fast decay in $x$ of the integrand and its derivatives.
		We conclude that all partial derivatives of $\tilde g$ exist, whence $\tilde g$ is smooth.
		In particular, the function $\tilde f(y,v)$ in \eqref{E:f.oo} is smooth in $(y,v)\in\R^2$.
		In view of \eqref{E:tr.f} this yields part (a) of the theorem.
		The expansions in \eqref{E:tr.a}, \eqref{E:tr.b}, \eqref{E:tr.c}, \eqref{E:a.b}, \eqref{E:c.b} follow immediately from Taylor's theorem applied to the smooth extension in (a). 
		Substituting $-x\leftrightarrow x$ in the integral \eqref{E:f.oo}, we obtain $\tilde f(-y,v)=\tilde f(y,v)=\tilde f(y,-v)$.
		Hence, $a_j(r)$, $b_{j,k}$, and $c_k(y)$ all vanish if $j$ or $k$ is odd, respectively.
		Note that $\tilde g(0,v)$ is polynomial in $v$.
		Hence, $r^2a_j(r)$ is a polynomial in $r^2$.
	
		Let us work out more explicit formulas for the coefficients $a_j(r)$, $b_{j,k}$, and $c_k(y)$.
		By Taylor's theorem,
		\begin{align}\notag
			\hat k_\alpha&\begin{pmatrix}\nu y^2/2+x^2/2\\vy^3x\\v^2y^6\lambda\\v^2y^6\mu\end{pmatrix}
			=\sum_{\substack{i_1,i_2,i_3,i_4\in\N_0\\i_1+i_2+i_3+i_4\leq N}}
			v^{i_2+2i_3+2i_4}y^{2i_1+3i_2+6i_3+6i_4}
			\\\notag
			&\hspace{5cm}\cdot(\nu/2)^{i_1}x^{i_2}\lambda^{i_3}\mu^{i_4}
			\frac{\partial_1^{i_1}\partial_2^{i_2}\partial_3^{i_3}\partial_4^{i_4}\hat k_\alpha}{i_1!i_2!i_3!i_4!}
			\begin{pmatrix}x^2/2\\0\\0\\0\end{pmatrix}
			\\\notag
			&+\sum_{\substack{i_1,i_2,i_3,i_4\in\N_0\\i_1+i_2+i_3+i_4=N+1}}
			v^{i_2+2i_3+2i_4}y^{2i_1+3i_2+6i_3+6i_4}(\nu/2)^{i_1}x^{i_2}\lambda^{i_3}\mu^{i_4}
			\\\label{E:Taylor.0}
			&\qquad\cdot(N+1)\int_0^1(1-\xi)^N\frac{\partial_1^{i_1}\partial_2^{i_2}\partial_3^{i_3}\partial_4^{i_4}\hat k_\alpha}{i_1!i_2!i_3!i_4!}
			\begin{pmatrix}\xi\nu y^2/r+x^2/2\\\xi vy^3x\\\xi v^2y^6\lambda\\\xi v^2y^6\mu\end{pmatrix}d\xi.
		\end{align}
		As $\hat k_\alpha$ is a Schwartz function, 
		\begin{equation}\label{E:esti.0}
			x^{i_2}\frac{\partial_1^{i_1}\partial_2^{i_2}\partial_3^{i_3}\partial_4^{i_4}\hat k_\alpha}{i_1!i_2!i_3!i_4!}
			\begin{pmatrix}\xi\nu y^2/2+x^2/2\\\xi vy^3x\\\xi v^2y^6\lambda\\\xi v^2y^6\mu\end{pmatrix}
			=O\left(\frac1{1+x^2}\right),
		\end{equation}
		uniformly for $|y|\leq T$, $v\in\R$, $x\in\R$, and $0\leq\xi\leq1$.
		Combining this estimate with \eqref{E:f.oo} and \eqref{E:Taylor.0}, we obtain  
		\[
			\tilde f(y,v)
			=\sum_{\substack{j,k\in\N_0\\j+k\leq N}}v^ky^jb_{j,k}
			+O\left((|v|+|y|)^{N+1}\right),
		\]
		uniformly for $|y|\leq T$ and $|v|\leq T$ with
		\begin{equation}\label{E:bjk.formula}
			b_{j,k}
			=\sum_{\substack{i_1,i_2,i_3,i_4\in\N_0\\2i_1+3i_2+6i_3+6i_4=j\\i_2+2i_3+2i_4=k}}
			(\nu/2)^{i_1}\lambda^{i_3}\mu^{i_4}\int_{-\infty}^\infty\tr
			\frac{\partial_1^{i_1}\partial_2^{i_2}\partial_3^{i_3}\partial_4^{i_4}\hat k_\alpha}{i_1!i_2!i_3!i_4!}
			\begin{pmatrix}x^2/2\\0\\0\\0\end{pmatrix}x^{i_2}dx.
		\end{equation}
		Combining this with \eqref{E:tr.f}, we obtain the estimate in \eqref{E:tr.b}.
		Clearly, $b_{j,k}$ vanishes for $3k>j$.
		Hence, \eqref{E:f.oo}, \eqref{E:Taylor.0}, and \eqref{E:esti.0} also yield
		\[
			\tilde f(y,v)
			=\sum_{j=0}^Ny^jva_j(\sqrt v)+O\left(|y|^{N+1}\right)
		\]
		uniformly on $|y|\leq T$ and $0<v\leq T$ with 
		\begin{equation}\label{E:aj.formula}
			va_j(\sqrt v)
			=\sum_{k=0}^{\lfloor j/3\rfloor}v^kb_{j,k}.
		\end{equation}
		Combining this with \eqref{E:tr.f}, we obtain the estimate in \eqref{E:tr.a}.
		By Taylor's theorem,
		\begin{align}\notag
			\hat k_\alpha&\begin{pmatrix}\nu y^2/2+x^2/2\\vy^3x\\v^2y^6\lambda\\v^2y^6\mu\end{pmatrix}
			=\sum_{\substack{i_2,i_3,i_4\in\N_0\\i_2+i_3+i_4\leq N}}
			v^{i_2+2i_3+2i_4}y^{3i_2+6i_3+6i_4}
			\\\notag
			&\hspace{4cm}\cdot x^{i_2}\lambda^{i_3}\mu^{i_4}
			\frac{\partial_2^{i_2}\partial_3^{i_3}\partial_4^{i_4}\hat k_\alpha}{i_2!i_3!i_4!}
			\begin{pmatrix}\nu y^2/2+x^2/2\\0\\0\\0\end{pmatrix}
			\\\notag
			&+\sum_{\substack{i_2,i_3,i_4\in\N_0\\i_2+i_3+i_4=N+1}}
			v^{i_2+2i_3+2i_4}y^{3i_2+6i_3+6i_4}x^{i_2}\lambda^{i_3}\mu^{i_4}
			\\\label{E:Taylor.1}
			&\qquad\cdot(N+1)\int_0^1(1-\xi)^N\frac{\partial_2^{i_2}\partial_3^{i_3}\partial_4^{i_4}\hat k_\alpha}{i_2!i_3!i_4!}
			\begin{pmatrix}\nu y^2/2+x^2/2\\\xi vy^3x\\\xi v^2y^6\lambda\\\xi v^2y^6\mu\end{pmatrix}d\xi.
		\end{align}
		As $\hat k_\alpha$ is in the Schwartz space, 
		\[
			x^{i_2}\frac{\partial_2^{i_2}\partial_3^{i_3}\partial_4^{i_4}\hat k_\alpha}{i_2!i_3!i_4!}
			\begin{pmatrix}\nu y^2/2+x^2/2\\\xi vy^3x\\\xi v^2y^6\lambda\\\xi v^2y^6\mu\end{pmatrix}
			=O\left(\frac1{1+x^2}\right)
		\]
		uniformly for $|y|\leq T$, $v\in\R$, $x\in\R$, and $0\leq\xi\leq1$.
		Combining this estimate with \eqref{E:f.oo} and \eqref{E:Taylor.1}, we obtain  
		\begin{equation}\label{E:tr.c.old}
			\tilde f(y,v)
			=\sum_{k=0}^Nv^ky^3c_k(y^{4\kappa})+O\bigl(|v|^{N+1}\bigr)
		\end{equation}
		uniformly for $0<y\leq T$ and $|v|\leq T$ with 
		\begin{multline}\label{E:ck.formula}
			y^3c_k(y^{4\kappa})
			=y^{3k}\sum_{\substack{i_2,i_3,i_4\in\N_0\\i_2+2i_3+2i_4=k}}
			\lambda^{i_3}\mu^{i_4}
			\\
			\cdot\int_{-\infty}^\infty\tr\frac{\partial_2^{i_2}\partial_3^{i_3}\partial_4^{i_4}\hat k_\alpha}{i_2!i_3!i_4!}
			\begin{pmatrix}\nu y^2/2+x^2/2\\0\\0\\0\end{pmatrix}x^{i_2}dx.
		\end{multline}
		Combining this with \eqref{E:tr.f}, we obtain the estimate in \eqref{E:tr.c}.
		These representations clearly exhibit the fact that the coefficients $a_j(r)$, $b_{j,k}$ and $c_k(y)$ vanish if $j$ or $k$ is odd.
		Indeed, the integrals in \eqref{E:bjk.formula} and \eqref{E:ck.formula} vanish as the integrands are odd functions of $x$ in this case.

		Substituting $t^{1/2\kappa}\nu/2r+x^2/2\leftrightarrow x$ in the integral \eqref{E:tr.k.alpha}, we obtain 
		\begin{multline}\label{E:tr.D.pm}
			\tr\left(e^{-t\rho_{r\lambda,r\mu,\nu}(A)}\right)
			=\frac{t^{-1/\kappa}}{\sqrt{|\nu|}}
			\int^\infty_{\frac{-|\nu|t^{1/2\kappa}}{2r}}\tr\hat k_\alpha
			\begin{pmatrix}x\\\sqrt{|\nu|}t^{1/\kappa}\sqrt{1+\frac{2rx}{|\nu|t^{1/2\kappa}}}\\t^{3/2\kappa}r\lambda\\t^{3/2\kappa}r\mu\end{pmatrix}
			\frac{dx}{\sqrt{1+\frac{2rx}{|\nu|t^{1/2\kappa}}}}
			\\
			+\frac{t^{-1/\kappa}}{\sqrt{|\nu|}}
			\int^\infty_{\frac{-|\nu|t^{1/2\kappa}}{2r}}\tr\hat k_\alpha
			\begin{pmatrix}x\\-\sqrt{|\nu|}t^{1/\kappa}\sqrt{1+\frac{2rx}{|\nu|t^{1/2\kappa}}}\\t^{3/2\kappa}r\lambda\\t^{3/2\kappa}r\mu\end{pmatrix}
			\frac{dx}{\sqrt{1+\frac{2rx}{|\nu|t^{1/2\kappa}}}}.
		\end{multline}
		Hence,
		\begin{equation}\label{E:tr.fpm}
			t^{1/\kappa}\tr\left(e^{-t\rho_{r\lambda,r\mu,\nu}(A)}\right)
			=f\left(\tfrac r{t^{1/2\kappa}},t^{1/\kappa}\right)
			+f\left(\tfrac r{t^{1/2\kappa}},-t^{1/\kappa}\right)
		\end{equation}
		where, for $z>0$ and $w\in\R$,
		\begin{equation}\label{E:fpm}
			f(z,w)
			=\frac1{\sqrt{|\nu|}}
			\int_{-|\nu|/2z}^\infty\tr\hat k_\alpha
			\begin{pmatrix}x\\\sqrt{|\nu|}w\sqrt{1+\frac{2zx}{|\nu|}}\\w^2z\lambda\\w^2z\mu\end{pmatrix}
			\frac{dx}{\sqrt{1+\frac{2zx}{|\nu|}}}.
		\end{equation}
		We will show that $f(z,w)$ extends to a smooth function in $(z,w)\in[0,\infty)\times\R$.

		Splitting the integral and substituting, we write this in the form
		\begin{equation}\label{E:fpm12}
			f(z,w)=f_1(z,w)+f_2(z,w)
		\end{equation}
		with
		\begin{equation}\label{E:f1pm}
			f_1(z,w)
			=\frac{\sqrt{|\nu|}}{2z}\int_{-1}^{-1/2}\tr\hat k_\alpha
			\begin{pmatrix}x|\nu|/2z\\\sqrt{|\nu|}w\sqrt{1+x}\\w^2z\lambda\\w^2z\mu\end{pmatrix}
			\frac{dx}{\sqrt{1+x}}
		\end{equation}
		and
		\begin{equation}\label{E:f2pm}
			f_2(z,w)
			=\frac1{\sqrt{|\nu|}}\int_{-|\nu|/4z}^\infty\tr\hat k_\alpha
			\begin{pmatrix}x\\\sqrt{|\nu|}w\sqrt{1+\frac{2zx}{|\nu|}}\\w^2z\lambda\\w^2z\mu\end{pmatrix}
			\frac{dx}{\sqrt{1+\frac{2zx}{|\nu|}}}.
		\end{equation}

		To analyze $f_1(z,w)$ we consider functions of the form
		\[
			g_1(z,w)
			=z^lw^p\int_{-1}^{-1/2} 
			k\begin{pmatrix}x|\nu|/2z\\\sqrt{|\nu|}w\sqrt{1+x}\\w^2z\lambda\\w^2z\mu\end{pmatrix}
			(1+x)^{\frac{q-1}2}dx
		\]
		where $l\in\Z$, $p,q\in\N_0$, $k\in\mathcal S(\R^4)$, $z>0$, and $w\in\R$.
		As $k$ is a Schwartz function, 
		\begin{equation}\label{E:g1pm.infty}
			g_1(z,w)
			=O\left(|w|^pz^N\right)
		\end{equation}
		for each $N$, uniformly for $0<z\leq T$ and $w\in\R$.
		In particular, $g_1(z,w)$ extends continuously to $z=0$ where it is flat.
		Note that $\frac{\partial g_1}{\partial z}(z,w)$ and $\frac{\partial g_1}{\partial w}(z,w)$ are finite linear combinations of functions of the same type as $g_1(z,w)$.
		Hence, all partial derivatives of the function $f_1(z,w)$ in \eqref{E:f1pm} extend continuously to $z=0$.
		We conclude that $f_1(z,w)$ extends to a smooth function in $(z,w)\in[0,\infty)\times\R$ which is flat along $z=0$.
		From \eqref{E:g1pm.infty} we obtain
		\begin{equation}\label{E:f1pm.infty}
			f_1(z,w)
			=O\left(z^N\right)
		\end{equation}
		for all $N$, uniformly in $0<z\leq T$ and $w\in\R$.

		To analyze $f_2(z,w)$ we consider functions of the form
		\[
			g_2(z,w)
			=z^lw^p\int_{-|\nu|/4z}^\infty
			k\begin{pmatrix}x\\\sqrt{|\nu|}w\sqrt{1+\frac{2zx}{|\nu|}}\\w^2z\lambda\\w^2z\mu\end{pmatrix}
			\left(1+\tfrac{2zx}{|\nu|}\right)^{\frac{q}2}dx.
		\]
		where $l,p\in\N_0$, $q\in\Z$, $k\in\mathcal S(\R^4)$, $z>0$, and $w\in\R$.
		As $k$ is a Schwartz function, 
		\[
			k\begin{pmatrix}x\\\sqrt{|\nu|}w\sqrt{1+\frac{2zx}{|\nu|}}\\w^2z\lambda\\w^2z\mu\end{pmatrix}
			\left(1+\frac{2zx}{|\nu|}\right)^{\frac{q}2}
			=k\begin{pmatrix}x\\\sqrt{|\nu|}w\\0\\0\end{pmatrix}
			+O\left(\frac z{1+x^2}\left(\frac1{1+w^2}\right)^M\right)
		\]
		for each $M$, uniformly in $0<z\leq T$, $w\in\R$, and $x\geq-|\nu|/4z$.
		Moreover,
		\[
			\int_{-\infty}^{-|\nu|/4z}
                        k\begin{pmatrix}x\\\sqrt{|\nu|}w\\0\\0\end{pmatrix}
                        dx
			=O\left(z^N\left(\frac1{1+w^2}\right)^M\right)
		\]
		for all $N$ and $M$, uniformly in $0<z\leq T$ and $w\in\R$.
		Hence,	
		\begin{equation}\label{E:g2pm.infty}
			g_2(z,w)
                        =z^lw^p\int_{-\infty}^\infty
                        k\begin{pmatrix}x\\\sqrt{|\nu|}w\\0\\0\end{pmatrix}
                        dx
			+O\left(z^{l+1}\left(\frac1{1+w^2}\right)^M\right)
		\end{equation}
		for each $M$, uniformly in $0<z\leq T$ and $w\in\R$.
		In particular, $g_2(z,w)$ extends continuously to $z=0$.
		Moreover, $\frac{\partial g_2}{\partial z}(z,w)$ and $\frac{\partial g_2}{\partial w}(z,w)$ are finite linear combinations of functions of the same type as $g_2(z,w)$ and $g_3(z,w)$ where the latter is of the form
		\[
			g_3(z,w)
			=z^lw^pk\begin{pmatrix}-|\nu|/4z\\\sqrt{|\nu|/2}w\\w^2z\lambda\\w^2z\mu\end{pmatrix}
		\]
		with $l\in\Z$ and $p\in\N_0$.
		As $k$ is a Schwartz function, 
		\begin{equation}\label{E:g3pm.infty}
			g_3(z,w)
			=O\left(z^N\left(\frac1{1+w^2}\right)^M\right)
		\end{equation}
		for all $M$ and $N$, uniformly for $0<z\leq T$ and $w\in\R$.
		Furthermore, $\frac{\partial g_3}{\partial z}(z,w)$ and $\frac{\partial g_3}{\partial w}(z,w)$ are finite linear combinations of functions of the same type as $g_3(z,w)$.
		We conclude that all partial derivatives of the function $f_2(z,w)$ in \eqref{E:f2pm} extend continuously to $z=0$.
		Hence, $f_2(z,w)$ extends to a smooth functions in $(z,w)\in[0,\infty)\times\R$.

		Combining the observations in the preceding two paragraphs with \eqref{E:fpm12}, we see that $f(z,w)$ extends to a smooth function in $(z,w)\in[0,\infty)\times\R$ with
		\begin{equation}\label{E:f.infty}
			f(0,w)
			=\frac1{\sqrt{|\nu|}}\int_{-\infty}^\infty\tr\hat k_\alpha
			\begin{pmatrix}x\\\sqrt{|\nu|}w\\0\\0\end{pmatrix}dx.
		\end{equation}
		In view of \eqref{E:tr.fpm}, this yields part (b) of the theorem.
		The expansions in \eqref{E:tr.tc}, \eqref{E:tr.d}, \eqref{E:tr.e}, \eqref{E:c.d}, \eqref{E:e.d} follow immediately from Taylor's theorem applied to the smooth extension in (b) and $z=y^{-2}$. 
		To see that the coefficient in \eqref{E:tr.tc} is indeed $y^{4-4k}c_k(y^{4\kappa})$, one has to compare this expansion with the one in \eqref{E:tr.c} for fixed $0<y<\infty$ using the relation $y^4r^2=t^{1/\kappa}$.
		As $f(z,w)+f(z,-w)$ is an even function of $w$, the coefficient $d_{k,l}$ vanishes for odd $k$, cf.~\eqref{E:tr.fpm}.
		
		Let us work out an explicit formula for $e_l(t)$.
		By Taylor's theorem,
		\begin{align}\notag
			\hat k_\alpha&\begin{pmatrix}x\\\sqrt{|\nu|}w\sqrt{1+\frac{2zx}{|\nu|}}\\w^2z\lambda\\w^2z\mu\end{pmatrix}
			=\sum_{\substack{i_2,i_3,i_4\in\N_0\\i_2+i_3+i_4\leq N}}
			w^{i_2+2i_3+2i_4}z^{i_3+i_4}
			\\\notag
			&\qquad\cdot\left(\textstyle{\sqrt{1+\frac{2zx}{|\nu|}}-1}\right)^{i_2}|\nu|^{i_2/2}\lambda^{i_3}\mu^{i_4}
			\frac{\partial_2^{i_2}\partial_3^{i_3}\partial_4^{i_4}\hat k_\alpha}{i_2!i_3!i_4!}
			\begin{pmatrix}x\\\sqrt{|\nu|}w\\0\\0\end{pmatrix}
			\\\notag
			&+\sum_{\substack{i_2,i_3,i_4\in\N_0\\i_2+i_3+i_4=N+1}}
			w^{i_2+2i_3+2i_4}z^{i_3+i_4}
			\left(\textstyle{\sqrt{1+\frac{2zx}{|\nu|}}-1}\right)^{i_2}|\nu|^{i_2/2}\lambda^{i_3}\mu^{i_4}(N+1)
			\\\label{E:Taylor.nu}
			&\qquad\cdot\int_0^1(1-\xi)^N\frac{\partial_2^{i_2}\partial_3^{i_3}\partial_4^{i_4}\hat k_\alpha}{i_2!i_3!i_4!}
			\begin{pmatrix}x\\\sqrt{|\nu|}w\left(1+\xi\left(\sqrt{1+\frac{2zx}{|\nu|}}-1\right)\right)\\\xi w^2z\lambda\\\xi w^2z\mu\end{pmatrix}d\xi.
		\end{align}
		for $x\geq-|\nu|/2z$.
		As $\hat k_\alpha$ is in the Schwartz space and $|\sqrt{1+u}-1|\leq|u|$ for $-1\leq u<\infty$, we have
		\begin{multline}\label{E:esti.o}
			\left(\textstyle{\sqrt{1+\frac{2zx}{|\nu|}}-1}\right)^{i_2}
			\frac{\partial_2^{i_2}\partial_3^{i_3}\partial_4^{i_4}\hat k_\alpha}{i_2!i_3!i_4!}
			\begin{pmatrix}x\\\sqrt{|\nu|}w\left(1+\xi\left(\sqrt{1+\frac{2zx}{|\nu|}}-1\right)\right)\\\xi w^2z\lambda\\\xi w^2z\mu\end{pmatrix}
			\\
			=O\left(\left(\frac1{1+w^2}\right)^M\frac{z^{i_2}}{1+x^2}\right)
		\end{multline}
		for each $M$, uniformly for $0<z\leq T$, $w\in\R$, $x\geq-|\nu|/4z$ and $0\leq\xi\leq1$.

		Combining \eqref{E:f2pm} with \eqref{E:Taylor.nu} and \eqref{E:esti.o}, we obtain
		\begin{align}\notag
			f_2(z,w)
			&=\sum_{\substack{i_2,i_3,i_4\in\N_0\\i_2+i_3+i_4\leq N}}
			w^{i_2+2i_3+2i_4}z^{i_3+i_4}|\nu|^{(i_2-1)/2}\lambda^{i_3}\mu^{i_4}
			\\\notag&\qquad
			\cdot\int_{-|\nu|/4z}^{|\nu|/4z}\tr\frac{\partial_2^{i_2}\partial_3^{i_3}\partial_4^{i_4}\hat k_\alpha}{i_2!i_3!i_4!}
			\begin{pmatrix}x\\\sqrt{|\nu|}w\\0\\0\end{pmatrix}
			\frac{\left(\sqrt{1+\frac{2zx}{|\nu|}}-1\right)^{i_2}dx}{\sqrt{1+\frac{2zx}{|\nu|}}}
			\\\label{E:taylor.pm.pre}
			&\qquad+O\left(\left(\frac1{1+w^2}\right)^Mz^{N+1}\right)
		\end{align}
		for all $N$ and $M$, uniformly for $0<z\leq T$ and $w\in\R$.
		Let the coefficients $C_{i_2,q}$ be defined through the Taylor expansion
		\begin{equation}\label{E:taylor.sqrt}
			\frac{\left(\sqrt{1+2u}-1\right)^{i_2}}{\sqrt{1+2u}}
			=\sum_{q=0}^QC_{i_2,q}u^{i_2+q}
			+O\left(u^{i_2+Q+1}\right),\qquad|u|<1/4
		\end{equation}
		with leading coefficient $C_{i_2,0}=1$.
		As $\hat k_\alpha$ is in the Schwartz space, for all $q\in\N_0$ and any $N$ and $M$, we have
		\begin{equation}\label{E:int.nu42.iv}
			\int_{-|\nu|/4z}^{|\nu|/4z}
			\frac{\partial_2^{i_2}\partial_3^{i_3}\partial_4^{i_4}\hat k_\alpha}{i_2!i_3!i_4!}
			\begin{pmatrix}x\\\sqrt{|\nu|}w\\0\\0\end{pmatrix}
			x^qdx
			=O\left(\left(\frac1{1+w^2}\right)^M\right)
		\end{equation}
		uniformly for $0<z\leq T$ and $w\in\R$; and
		\begin{equation}\label{E:int.nu42.iii}
			\int_{|x|\geq\frac{|\nu|}{4z}}
			\frac{\partial_2^{i_2}\partial_3^{i_3}\partial_4^{i_4}\hat k_\alpha}{i_2!i_3!i_4!}
			\begin{pmatrix}x\\\sqrt{|\nu|}w\\0\\0\end{pmatrix}
			x^qdx
			=O\left(\left(\frac1{1+w^2}\right)^Mz^N\right)
		\end{equation}
		uniformly for $0<z\leq T$ and $w\in\R$.
		Combining \eqref{E:taylor.pm.pre} with \eqref{E:taylor.sqrt}, \eqref{E:int.nu42.iv}, and \eqref{E:int.nu42.iii} we obtain
		\begin{multline}\label{E:taylor.f2.pm}
			f_2(z,w)
			=\sum_{\substack{q,i_2,i_3,i_4\in\N_0\\q+i_2+i_3+i_4\leq N}}
			w^{i_2+2i_3+2i_4}z^{q+i_2+i_3+i_4}|\nu|^{-(2q+i_2+1)/2}\lambda^{i_3}\mu^{i_4}C_{i_2,q}
			\\
			\cdot\int_{-\infty}^\infty\tr\frac{\partial_2^{i_2}\partial_3^{i_3}\partial_4^{i_4}\hat k_\alpha}{i_2!i_3!i_4!}
			\begin{pmatrix}x\\\sqrt{|\nu|}w\\0\\0\end{pmatrix}
			x^{i_2+q}dx
			+O\left(\left(\frac1{1+w^2}\right)^Mz^{N+1}\right)
		\end{multline}
		for all $N$ and $M$, uniformly for $0<z\leq T$ and $w\in\R$.
		Combining this with \eqref{E:fpm12} and \eqref{E:f1pm.infty} we get
		\begin{multline}\label{E:taylor.pm}
			f(z,w)
			=\sum_{\substack{q,i_2,i_3,i_4\in\N_0\\q+i_2+i_3+i_4\leq N}}
			w^{i_2+2i_3+2i_4}z^{q+i_2+i_3+i_4}|\nu|^{-(2q+i_2+1)/2}\lambda^{i_3}\mu^{i_4}C_{i_2,q}
			\\
			\cdot\int_{-\infty}^\infty\tr\frac{\partial_2^{i_2}\partial_3^{i_3}\partial_4^{i_4}\hat k_\alpha}{i_2!i_3!i_4!}
			\begin{pmatrix}x\\\sqrt{|\nu|}w\\0\\0\end{pmatrix}
			x^{i_2+q}dx
			+O\left(\left(\frac1{1+w^2}\right)^Mz^{N+1}\right)
		\end{multline}
		for all $N$ and $M$, uniformly for $w\in\R$ and $0<z\leq T(1+w^2)^{-1/4}$.
		Combining this with \eqref{E:tr.fpm}, we obtain
		\begin{multline}\label{E:tr.e.old}
			t^{1/\kappa}\tr\left(e^{-t\rho_{r\lambda,r\mu,\nu}(A)}\right)
			=\sum_{l=0}^N\left(\frac r{t^{1/2\kappa}}\right)^lt^{(l/2+1)/\kappa}e_l(t)
			\\
			+O\left(\left(\frac1{1+t}\right)^M\left(\frac r{t^{1/2\kappa}}\right)^{N+1}\right)
		\end{multline}
		for all $M$ and $N$, uniformly for $r,t>0$ satisfying $\frac r{t^{1/2\kappa}}\leq T(1+t^{2/\kappa})^{-1/4}$, where $e_l(t)=e_l^+(t)+e_l^-(t)$ with
		\begin{multline}\label{E:epm.formula}
			e^\pm_l(t)
			=\sum_{\substack{q,i_2,i_3,i_4\in\N_0\\q+i_2+i_3+i_4=l}}
			t^{(i_2+2i_3+2i_4-l/2-1)/\kappa}
			\frac{(\pm1)^{i_2}\lambda^{i_3}\mu^{i_4}}{|\nu|^{(2q+i_2+1)/2}}C_{i_2,q}
			\\\cdot\int_{-\infty}^\infty\tr\frac{\partial_2^{i_2}\partial_3^{i_3}\partial_4^{i_4}\hat k_\alpha}{i_2!i_3!i_4!}
			\begin{pmatrix}x\\\pm\sqrt{|\nu|}t^{1/\kappa}\\0\\0\end{pmatrix}
			x^{q+i_2}dx.
		\end{multline}
		The estimate in \eqref{E:tr.e.old} holds uniformly for $\varepsilon\leq\frac{t^{1/2\kappa}}r<\infty$ and $0<t\leq T$, whence the estimate in \eqref{E:tr.e}.
		It also holds uniformly for $\varepsilon\leq t<\infty$ and $0<r\leq1$, whence the estimate in \eqref{E:tr.infty}.
		If $l$ is odd, then $e_l^\pm(t)$ vanishes for the integrand in \eqref{E:epm.formula} is an odd function of $x$ in view of \eqref{E:homog-1}.
		In particular, $e_l(t)=0$ for odd $l$.
		For the leading term we find
		\[
			e^\pm_0(t)
			=\frac{t^{-1/\kappa}}{\sqrt{|\nu|}}\int_{-\infty}^\infty\tr\hat k_\alpha
			\begin{pmatrix}x\\\pm\sqrt{|\nu|}t^{1/\kappa}\\0\\0\end{pmatrix}dx.
		\]
		Combining this with \eqref{E:tr.k.alpha.h} we obtain \eqref{E:e0}.

		Proceeding analogously, one can show $d_{k,l}=d^+_{k,l}+d^-_{k,l}$ where
		\begin{equation}\label{E:dpm.formula}
			d^\pm_{k,l}
			=\sum_{\substack{p,i_2,i_3,i_4\in\N_0\\i_2+2i_3+2i_4=k\\p+i_3+i_4=l}}
			\frac{(\pm1)^{i_2}\lambda^{i_3}\mu^{i_4}}{|\nu|^{(2p-i_2+1)/2}}\binom{\frac{i_2-1}2}p2^p
			\int_{-\infty}^\infty\frac{\partial_2^{i_2}\partial_3^{i_3}\partial_4^{i_4}\hat k_\alpha}{i_2!i_3!i_4!}
			\begin{pmatrix}x\\0\\0\\0\end{pmatrix}x^pdx.
		\end{equation}
		If $k$ is odd, then $d_{k,l}^+$ and $d_{k,l}^-$ differ by a sign and $d_{k,l}$ vanishes.
		If $l$ is odd, then $d^\pm_{k,l}$ vanishes as the integrand in \eqref{E:dpm.formula} is an odd function of $x$ in view of \eqref{E:homog-1}.
		In particular, $d_{k,l}=0$ for odd $l$.
		The estimate in \eqref{E:e.infty} follows from \eqref{E:epm.formula}.
	\end{proof}

	\begin{remark}\label{R:poly.pos}
		For $\nu\geq0$, Theorem~\ref{T:poly}(a) and the corresponding expansions in \eqref{E:tr.a}, \eqref{E:tr.b}, \eqref{E:tr.c}, \eqref{E:a.b}, and \eqref{E:c.b} remain true.
		Indeed, the formula in \eqref{E:tr.k.alpha} holds for all $\nu$ and so do the expressions for the coefficients in \eqref{E:bjk.formula}, \eqref{E:aj.formula}, and \eqref{E:ck.formula}.
		The proof we gave remains valid for all $\nu$.
		
		For $\nu>0$, Theorem~\ref{T:poly}(b) remains true and the extension vanishes to infinite order along the boundary $\frac r{t^{1/2\kappa}}=0$.
		In particular, \eqref{E:tr.tc}, \eqref{E:tr.d}, \eqref{E:tr.e}, \eqref{E:tr.infty}, and \eqref{E:c.d} remain true with $d_{k,l}=0$ and $e_l=0$.
		Indeed, writing \eqref{E:fpm} in the form
		\[
			f(z,w)
			=\frac1{2z}\int_\nu^\infty\tr\hat k_\alpha
			\begin{pmatrix}x/2z\\w\sqrt{x-\nu}\\w^2z\lambda\\w^2z\mu\end{pmatrix}
			\frac{dx}{\sqrt{x-\nu}}
		\]
		equation \eqref{E:tr.fpm} remains true for all $\nu\in\R$.
		If $\nu>0$, then $f(z,w)$ extends to a smooth function in $(z,w)\in[0,\infty)\times\R$ that vanishes to infinte order along the boundary $z=0$.
		This can be shown exactly as the analogous fact for $f_1$, cf.~\eqref{E:f1pm}.
	\end{remark}

\section{Asymptotics of the zeta function}\label{S:zeta.asymp}

	We continue to consider a left invariant homogeneous positive Rockland differential operator $A$ on the 5-dimensional Lie group $G$, and a generic representation $\rho_{\lambda,\mu,\nu}$ of $G$ with $\nu<0$.
	In this section we will use the polyhomogeneous expansion in Theorem~\ref{T:poly} to establish the asymptotics of the zeta function 
	\begin{equation}\label{E:zeta}
		\zeta_{\rho_{r\lambda,r\mu,\nu}(A)}(s)
		=\tr\left((\rho_{r\lambda,r\mu,\nu}(A))^{-s}\right)
		=\frac1{\Gamma(s)}\int_0^\infty t^{s-1}\tr\left(e^{-t\rho_{r\lambda,r\mu,\nu}(A)}\right)dt
	\end{equation}
	for $r\to0$ as formulated in Theorem~\ref{T:Melin} below.

	Suppose $r>0$.
	In view of \eqref{E:tr.a} and \eqref{E:tr.infty}, the integral on the right hand side in \eqref{E:zeta} converges for $\Re s>3/4\kappa$, and extends to a meromorphic function in $s$ on the entire complex plane which has only simple poles located at $s=(3-j)/4\kappa$ with residuum $r^{(3-j)/2}a_j(r)$, $j\in\N_0$.
	We will use the representation
	\begin{multline}\label{E:tr.A.reg}
		\int_0^\infty t^{s-1}\tr\left(e^{-t\rho_{r\lambda,r\mu,\nu}(A)}\right)dt
		\\
		=\int_0^{r^{2\kappa}}t^{s-1}\left(\tr\left(e^{-t\rho_{r\lambda,r\mu,\nu}(A)}\right)
		-\sum_{j=0}^J\left(\frac t{r^{2\kappa}}\right)^{(j-3)/4\kappa}a_j(r)\right)dt
		\\
		+r^{2\kappa s}\sum_{j=0}^J\frac{a_j(r)}{s+(j-3)/4\kappa}
		+\int_{r^{2\kappa}}^\infty t^{s-1}\tr\left(e^{-t\rho_{r\lambda,r\mu,\nu}(A)}\right)dt,
	\end{multline}
	where the first integral on the right hand side converges for $\Re s>(2-J)/4\kappa$, and the second integral on the right hand side converges for all $s$.
	
	In view of \eqref{E:e.d} and \eqref{E:e.infty}, the integral $\int_0^\infty t^{s-1}e_l(t)dt$ converges for $\Re s>(l/2+1)/\kappa$, and extends to a meromorphic function in $s$ on the entire complex plane which has only simple poles located at $s=(l/2+1-k)/\kappa$ with residuum $d_{k,l}$, $k\in\N_0$.
	We will use the representation
	\begin{multline}\label{E:el.reg}
		\int_0^\infty t^{s-1}e_l(t)dt
		=\int_0^1t^{s-1}\left(e_l(t)-\sum_{k=0}^Kt^{(k-1-l/2)/\kappa}d_{k,l}dt\right)
		\\
		+\sum_{k=0}^K\frac{d_{k,l}}{s+(k-1-l/2)/\kappa}
		+\int_1^\infty t^{s-1}e_l(t)dt,
	\end{multline}
	where the first integral on the right hand side converges for $\Re s>(l/2-K)/\kappa$, and the second integral on the right hand side converges for all $s$.
	By \eqref{E:e0},
	\begin{equation}\label{E:e0.zeta}
		\frac1{\Gamma(s)}\int_0^\infty t^{s-1}e_0(t)dt=\zeta_{\rho_{\sqrt{|\nu|}}(A)}(s)+\zeta_{\rho_{-\sqrt{|\nu|}}(A)}(s).
	\end{equation}
	
	In view of \eqref{E:c.b} and \eqref{E:c.d}, the integral $\int_0^\infty y^{s-1}c_k(y)dy$ can be regularized, defining a meromorphic function in $s$ on the entire complex plane which has only simple poles located at $s=(3-j)/4\kappa$ with residuum $b_{j,k}$, $j\in\N_0$ and at $s=(l/2+1-k)/\kappa$ with residuum $-d_{k,l}$, $l\in\N_0$.
	We will use the representation
	\begin{multline}\label{E:ck.reg}
		\int_0^\infty y^{s-1}c_k(y)dy
		=\int_0^1y^{s-1}\left(c_k(y)-\sum_{j=0}^Jy^{(j-3)/4\kappa}b_{j,k}\right)dy
		\\
		+\int_1^{r^{-2\kappa}}y^{s-1}c_k(y)dy
		+\int_{r^{-2\kappa}}^\infty y^{s-1}\left(c_k(y)-\sum_{l=0}^Ly^{(2k-2-l)/2\kappa}d_{k,l}\right)dy
		\\
		+\sum_{j=0}^J\frac{b_{j,k}}{s+(j-3)/4\kappa}
		-\sum_{l=0}^L\frac{r^{-2\kappa s+l+2-2k}d_{k.l}}{s+(k-l/2-1)/\kappa},
	\end{multline}
	where the first integral on the right hand side converges for $\Re s>(2-J)/4\kappa$, the second integral converges for all $s$, and the third integral on the right hand side converges for $\Re s<((L+3)/2-k)/\kappa$.
	
	Put 
	\[
		P:=\left(\left\{\frac{3-j}{4\kappa}:j\in\N_0\right\}\cup\left\{\frac{l/2+1-k}\kappa:k,l\in\N_0\right\}\right)\setminus(-\N_0).
	\]
	As $\frac1{\Gamma(s)}$ is an entire function that vanishes for $s\in-\N_0$, the observations in the preceding three paragraphs show that the poles of the functions $\zeta_{\rho_{r\lambda,r\mu,\nu}(A)}(s)$ and $\frac1{\Gamma(s)}\int_0^\infty t^{s-1}e_l(t)dt$ and $\frac1{\Gamma(s)}\int_0^\infty y^{s-1}c_k(y)dy$ are all contained in $P$.

	\begin{theorem}\label{T:Melin}
		Let $V$ finite dimensional complex vector space equipped with a Hermitian inner product, and suppose $A=A^*\in\mathcal U^{-2\kappa}(\goe)\otimes\eend(V)$ is a left invariant positive Rockland differential operator on the 5-dimensional Lie group $G$ which is homogeneous of order $2\kappa>0$ with respect to the grading automorphism.
		Let $\rho_{\lambda,\mu,\nu}$ be a generic representation of $G$ with $\nu<0$.
		If $\sigma$ and $N$ are real numbers, then
		\begin{multline}\label{E:zetas}
			\zeta_{\rho_{r\lambda,r\mu,\nu}(A)}(s)
			=\sum_{l=0}^{\lceil N\rceil-1}\frac{r^l}{\Gamma(s)}\int_0^\infty t^{s-1}e_l(t)dt
			\\
			+\sum_{k=0}^{\left\lceil\frac{N-2\kappa\sigma}2\right\rceil}\frac{r^{2\kappa s+2k-2}}{\Gamma(s)}\int_0^\infty y^{s-1}c_k(y)dy
			+O\left(r^N\right)
		\end{multline}
		uniformly on compact subsets of $\{s\in\C\setminus P:\Re s\geq\sigma\}$ and for $0<r\leq1$.
		Here $\lceil x\rceil=\min\{k\in\Z:x\leq k\}$ denotes the ceiling function.
		In particular, we have an asymptotic expansion of the form
		\begin{multline}\label{E:zetap0}
			\zeta_{\rho_{r\lambda,r\mu,\nu}(A)}'(0)
			\sim\sum_{l=0}^\infty r^l\frac{\partial}{\partial s}\bigg|_{s=0}\frac1{\Gamma(s)}\int_0^\infty t^{s-1}e_l(t)dt
			\\
			+\sum_{k=0}^\infty r^{2k-2}\frac{\partial}{\partial s}\bigg|_{s=0}\frac1{\Gamma(s)}\int_0^\infty y^{s-1}c_k(y)dy
			-2\kappa\log r\sum_{k=1}^\infty r^{2k-2}d_{k,2k-2},
		\end{multline}
		as $r\to0$.
		For the constant term we find
		\begin{equation}\label{E:const}
			\LIM_{r\to0}\zeta_{\rho_{r\lambda,r\mu,\nu}(A)}'(0)
			=\zeta_{\rho_\hbar(A)}'(0)
			+\zeta_{\rho_{-\hbar}(A)}'(0),
		\end{equation}
		where $\rho_{\pm\hbar}$ denotes the Schr\"odinger representation with $\hbar=\sqrt{|\nu|}$.
	\end{theorem}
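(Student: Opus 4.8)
The plan is to feed the two-parameter polyhomogeneous expansions of Theorem~\ref{T:poly} into the Mellin representation \eqref{E:zeta} and to split the $t$-integral at the natural threshold $t=r^{2\kappa}$, equivalently $y=1$, exactly along the lines of the regularising identities \eqref{E:tr.A.reg}, \eqref{E:el.reg}, and \eqref{E:ck.reg}. On the inner piece $0<t\le r^{2\kappa}$, that is $0<y\le1$, I would insert the small-$y$ expansion \eqref{E:tr.c} in the form $\tr\left(e^{-t\rho_{r\lambda,r\mu,\nu}(A)}\right)=r^{-2}\sum_k r^{2k}c_k(y^{4\kappa})+O\left(r^{2N}y^{-3}\right)$, substitute $u=y^{4\kappa}=t/r^{2\kappa}$, and read off the contribution $\sum_k r^{2\kappa s+2k-2}\int_0^1 u^{s-1}c_k(u)\,du$ up to a remainder that becomes $O\left(r^N\right)$ after analytic continuation, provided the expansions of Theorem~\ref{T:poly} are carried to high enough order in terms of $N$ and $\sigma$. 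On the outer piece $t\ge r^{2\kappa}$, that is $1\le y$, I would use \eqref{E:tr.e} on $r^{2\kappa}\le t\le T$ and \eqref{E:tr.infty} on $T\le t<\infty$ --- both of which give $\tr\left(e^{-t\rho_{r\lambda,r\mu,\nu}(A)}\right)=\sum_l r^l e_l(t)+(\text{remainder})$ --- and read off $\sum_l r^l\int_{r^{2\kappa}}^\infty t^{s-1}e_l(t)\,dt$. Completing $\int_0^1 u^{s-1}c_k(u)\,du$ to the regularised integral $\frac1{\Gamma(s)}\int_0^\infty y^{s-1}c_k(y)\,dy$ of \eqref{E:ck.reg}, and $\int_{r^{2\kappa}}^\infty t^{s-1}e_l(t)\,dt$ to the regularised $\frac1{\Gamma(s)}\int_0^\infty t^{s-1}e_l(t)\,dt$ of \eqref{E:el.reg}, then reproduces precisely the two sums in \eqref{E:zetas}.

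The core of the argument is to check that everything introduced by these completions cancels to order $r^N$: the tails $\int_1^{r^{-2\kappa}}y^{s-1}c_k(y)\,dy$ and $\int_{r^{-2\kappa}}^\infty y^{s-1}c_k(y)\,dy$, the deficits $\int_0^{r^{2\kappa}}t^{s-1}e_l(t)\,dt$, the explicit pole terms in \eqref{E:el.reg} and \eqref{E:ck.reg}, and the truncation remainders of \eqref{E:tr.c}, \eqref{E:tr.e}, and \eqref{E:tr.infty}. This is where the consistency relations \eqref{E:a.b}--\eqref{E:e.infty} are indispensable: relation \eqref{E:c.d} matches the $y\to\infty$ behaviour of $c_k$ with the small-$t$ behaviour of $e_l$ governed by \eqref{E:e.d}, so that the regularisation poles generated on one side of $t=r^{2\kappa}$ are absorbed by the deficit integrals on the other side, and in particular the $\int_{r^{2\kappa}}^{T}$-part of the $e_l$-remainder --- which taken alone is only $O\left(r^{2\kappa\Re s-2}\right)$ --- becomes negligible after the cancellation. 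Uniformity on compact subsets of $\{s\in\C\setminus P:\Re s\ge\sigma\}$ and for $0<r\le1$ is inherited from the uniform estimates in Theorem~\ref{T:poly} together with standard bounds on $1/\Gamma$. I expect this matching of boundary contributions against regularisation poles, while retaining control in the variable $s$, to be the main technical obstacle; the genuine analytic content is already contained in Theorem~\ref{T:poly}.

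Granted \eqref{E:zetas}, the expansion \eqref{E:zetap0} follows by differentiating in $s$ at $s=0$, where $1/\Gamma(s)$ has a simple zero and $0\notin P$: the $s$-derivative at $0$ of the term $r^{2k-2}\cdot r^{2\kappa s}\frac1{\Gamma(s)}\int_0^\infty y^{s-1}c_k(y)\,dy$ equals $r^{2k-2}\left[\frac{\partial}{\partial s}\big|_{s=0}\frac1{\Gamma(s)}\int_0^\infty y^{s-1}c_k(y)\,dy\right]-2\kappa\,r^{2k-2}\log r\cdot d_{k,2k-2}$, the logarithmic part coming from $\frac{\partial}{\partial s}\big|_{s=0}r^{2\kappa s}=2\kappa\log r$ times the value at $s=0$ of $\frac1{\Gamma(s)}\int_0^\infty y^{s-1}c_k(y)\,dy$, which is the residue of $\int_0^\infty y^{s-1}c_k(y)\,dy$ at $s=0$, namely $-d_{k,2k-2}$ for $k\ge1$ and $0$ for $k=0$ (the only candidate residue $b_{3,0}$ vanishing by \eqref{E:ck.reg}, since $b_{j,k}$ vanishes for odd $j$); the terms $r^l\frac1{\Gamma(s)}\int_0^\infty t^{s-1}e_l(t)\,dt$ contribute no logarithm, and letting $N\to\infty$ and $\sigma\to-\infty$ yields the displayed series. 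Finally, to read off \eqref{E:const} one inspects the order $r^0$ in the three series of \eqref{E:zetap0}: the logarithmic series contributes at order $r^0$ only through $k=1$, but $d_{1,0}=0$ because $d_{k,l}$ vanishes for odd $k$; the $c_k$ series contributes at order $r^0$ only through $k=1$, but $c_1=0$ because $c_k$ vanishes for odd $k$, while its $k=0$ term is the pure negative power $r^{-2}\frac{\partial}{\partial s}\big|_{s=0}\frac1{\Gamma(s)}\int_0^\infty y^{s-1}c_0(y)\,dy$, which $\LIM_{r\to0}$ discards; and the $e_l$ series contributes at order $r^0$ only through $l=0$, because $e_l$ vanishes for odd $l$. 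Therefore $\LIM_{r\to0}\zeta_{\rho_{r\lambda,r\mu,\nu}(A)}'(0)=\frac{\partial}{\partial s}\big|_{s=0}\frac1{\Gamma(s)}\int_0^\infty t^{s-1}e_0(t)\,dt$, and by \eqref{E:e0}, equivalently \eqref{E:e0.zeta}, the right-hand side equals $\zeta_{\rho_\hbar(A)}'(0)+\zeta_{\rho_{-\hbar}(A)}'(0)$ with $\hbar=\sqrt{|\nu|}$, as asserted.
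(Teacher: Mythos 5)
Your plan follows the paper's proof in all essentials: split the Mellin integral at $t=r^{2\kappa}$ (and at a fixed scale for large $t$), feed in the expansions of Theorem~\ref{T:poly}, and complete the resulting integrals to the regularized ones in \eqref{E:el.reg} and \eqref{E:ck.reg}. Your treatment of \eqref{E:zetap0} and \eqref{E:const} is exactly the paper's and is correct: the value of $\frac1{\Gamma(s)}\int_0^\infty y^{s-1}c_k(y)dy$ at $s=0$ is the residue $-d_{k,2k-2}$ for $k\geq1$ (and $0$ for $k=0$, since $b_{3,k}=0$), the $k=1$ contributions drop out because $d_{1,0}=0$ and $c_1=0$, the odd $e_l$ vanish, and \eqref{E:e0.zeta} identifies the constant term.

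The one place where your sketch would not survive being written out as stated is the uniform error control for $\Re s\geq\sigma$ with $\sigma$ arbitrary. Inserting \eqref{E:tr.c} alone on the inner piece leaves a remainder $O(r^{2N}y^{-3})$, i.e.\ of size $t^{-3/4\kappa}$ in $t$, so its Mellin integral (and indeed the unregularized inner integral itself) diverges at $t=0$ once $\Re s\leq3/4\kappa$; and on $[r^{2\kappa},T]$ the single-subtracted $e_l$-remainder is, as you note, only $O\bigl(r^{2\kappa\Re s-2}\bigr)$. The paper does not rescue these by a cancellation of regularization poles against deficit integrals. Instead it forms the doubly subtracted functions $f(t,r)$ in \eqref{E:fMN} and $g(t,r)$ in \eqref{E:gMN} (subtract both one-variable expansions and add back the $b_{j,k}$, respectively $d_{k,l}$, terms) and uses the joint smoothness asserted in Theorem~\ref{T:poly}(a),(b) --- not merely the consistency relations \eqref{E:a.b}--\eqref{E:e.infty} --- to upgrade the two one-sided bounds to product-type bounds $O\bigl(r^{2(K+1)}\bigl(t^{1/4\kappa}/\sqrt r\bigr)^{J+1}\bigr)$ and $O\bigl(t^{(K+1)/\kappa}\bigl(r/t^{1/2\kappa}\bigr)^{L+1}\bigr)$, which are integrable against $t^{s-1}$ down to any prescribed $\sigma$ once $J,K,L$ are chosen large. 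With these in hand nothing needs to cancel: the deficit integrals \eqref{E:tr.Melin.d}, \eqref{E:tr.Melin.e} and the explicit pole terms in \eqref{E:dsP.0} are each \emph{separately} $O(r^N)$ on a compact $S\subseteq\{\Re s\geq\sigma\}\setminus P$, the pole terms because $S$ avoids $P$, $1/\Gamma(s)$ kills the poles in $-\N_0$, and \eqref{E:a.b} makes $a_j(r)-\sum_kr^{2k-2}b_{j,k}$ of high order in $r$. So your route is the right one, but the ``matching of boundary contributions against regularisation poles'' that you flag as the main obstacle should be replaced by this double-subtraction/product-bound argument; that is the actual content of the step you deferred, and it is where Theorem~\ref{T:poly}(a),(b), rather than the expansions alone, is indispensable.
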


	Let us point out that the unitary dual of $G$ is homeomorphic to the space of coadjoint orbits $\goe^*/G$ with the quotient topology \cite{B73}.
	This is a non-Hausdorff space, and the family of representations $\rho_{r\lambda,r\mu,\nu}$ has two limits, as $r\to0$, namely the Schr\"odinger representations $\rho_{\hbar}$ and $\rho_{-\hbar}$, corresponding to the two summands on the right hand side in \eqref{E:const} with $\hbar=\sqrt{|\nu|}$.

	\begin{proof}[{Proof of Theorem~\ref{T:Melin}}]
		Put
		\begin{multline}\label{E:fMN}
			f(t,r)
			:=\tr\left(e^{-t\rho_{r\lambda,r\mu,\nu}(A)}\right)
			+\sum_{j=0}^J\,\,\sum_{k=0}^Kr^{2k-2}\left(\frac t{r^{2\kappa}}\right)^{(j-3)/4\kappa}b_{j,k}
			\\-\sum_{j=0}^J\left(\frac t{r^{2\kappa}}\right)^{(j-3)/4\kappa}a_j(r)
			-\sum_{k=0}^Kr^{2k-2}c_k\left(\frac t{r^{2\kappa}}\right).
		\end{multline}
		According to Theorem~\ref{T:poly}(a), the function $t^{3/4\kappa}\sqrt rf(t,r)$ is smooth in the variables $\bigl(\frac{t^{1/4\kappa}}{\sqrt r},r^2\bigr)\in[0,\infty)^2$.
		From \eqref{E:tr.a} and \eqref{E:c.b} we obtain
		\[
			t^{3/4\kappa}\sqrt rf(r,t)\in O\left(\left(\frac{t^{1/4\kappa}}{\sqrt r}\right)^{J+1}\right),
		\] 
		uniformly for $0<t\leq r^{2\kappa}\leq1$.
		Combining \eqref{E:tr.c} and \eqref{E:a.b} we get 
		\[
			t^{3/4\kappa}\sqrt rf(r,t)\in O\left(r^{2(K+1)}\right),
		\] 
		uniformly for $0<t\leq r^{2\kappa}\leq1$.
		By smoothness, these yield
		\[
			t^{3/4\kappa}\sqrt rf(r,t)\in O\left(r^{2(K+1)}\left(\frac{t^{1/4\kappa}}{\sqrt r}\right)^{J+1}\right),
		\]
		uniformly for $0<t\leq r^{2\kappa}\leq1$.
		In other words,
		\[
			f(r,t)\in O\Bigl(r^{2K+1-J/2}t^{(J-2)/4\kappa}\Bigr),
		\] 
		uniformly for $0<t\leq r^{2\kappa}\leq1$.
		Hence, 
		\[
			\int_0^{r^{2\kappa}}t^{s-1}f(r,t)dt=O\left(\frac{r^{2\kappa\Re s+2K}}{\Re s+(J-2)/4\kappa}\right),
		\] 
		uniformly for $\Re s>-(J-2)/4\kappa$ and $0<r\leq1$.
		Combining this with \eqref{E:fMN}, we get
		\begin{multline}\label{E:tr.Melin.a}
			\int_0^{r^{2\kappa}}t^{s-1}\left(\tr\left(e^{-t\rho_{r\lambda,r\mu,\nu}(A)}\right)-\sum_{j=0}^J\left(\frac t{r^{2\kappa}}\right)^{(j-3)/4\kappa}a_j(r)\right)dt
			\\=\sum_{k=0}^Kr^{2\kappa s+2k-2}\int_0^1y^{s-1}\left(c_k(y)-\sum_{j=0}^Jy^{(j-3)/4\kappa}b_{j,k}\right)dy
			\\+O\left(\frac{r^{2\kappa\Re s+2K}}{\Re s+(J-2)/4\kappa}\right)
		\end{multline}
		uniformly for $\Re s>-(J-2)/4\kappa$ and $0<r\leq1$.

		Put 
		\begin{multline}\label{E:gMN}
			g(t,r)
			:=\tr\left(e^{-t\rho_{r\lambda,r\mu,\nu}(A)}\right)
			+\sum_{k=0}^K\sum_{l=0}^Lt^{(k-1)/\kappa}\left(\frac t{r^{2\kappa}}\right)^{-l/2\kappa}d_{k,l}
			\\-\sum_{k=0}^Kt^{(k-1)/\kappa}\left(\frac t{r^{2\kappa}}\right)^{(2-2k)/2\kappa}c_k\left(\frac t{r^{2\kappa}}\right)
			-\sum_{l=0}^L\left(\frac t{r^{2\kappa}}\right)^{-l/2\kappa}t^{l/2\kappa}e_l(t).
		\end{multline}
		According to Theorem~\ref{T:poly}(b), the function $t^{1/\kappa}g(t,r)$ is smooth in the variables $\bigl(\frac r{t^{1/2\kappa}},t^{1/\kappa}\bigr)\in[0,\infty)^2$.
		From \eqref{E:tr.tc} and \eqref{E:e.d} we obtain
		\[
			t^{1/\kappa}g(r,t)\in O\left(t^{(K+1)/\kappa}\right),
		\] 
		uniformly for $0<r^{2\kappa}\leq t\leq1$.
		Combining \eqref{E:tr.e} and \eqref{E:c.d} we get 
		\[
			t^{1/\kappa}g(r,t)\in O\left(\left(\frac r{t^{1/2\kappa}}\right)^{L+1}\right),
		\] 
		uniformly for $0<r^{2\kappa}\leq t\leq1$.
		By smoothness, these yield
		\[
			t^{1/\kappa}g(r,t)\in O\left(t^{(K+1)/\kappa}\left(\frac r{t^{1/2\kappa}}\right)^{L+1}\right),
		\] 
		uniformly for $0<r^{2\kappa}\leq t\leq1$.
		In other words,
		\[
			g(r,t)\in O\left(t^{(2K-1-L)/2\kappa}r^{L+1}\right),
		\] 
		uniformly for $0<r^{2\kappa}\leq t\leq1$.
		Hence, 
		\[
			\int_{r^{2\kappa}}^1t^{s-1}g(r,t)dt
			=O\left(\frac{r^{L+1}}{\Re s+(2K-1-L)/2\kappa}\right),
		\] 
		uniformly for $\Re s>-(2K-1-L)/2\kappa$ and $0<r\leq1$.
		Combining this with \eqref{E:gMN}, we get
		\begin{multline}\label{E:tr.Melin.b}
			\int_{r^{2\kappa}}^1t^{s-1}\tr\left(e^{-t\rho_{r\lambda,r\mu,\nu}(A)}\right)dt
			\\=\sum_{l=0}^Lr^l\int_{r^{2\kappa}}^1t^{s-1}\left(e_l(t)-\sum_{k=0}^Kt^{(2k-2-l)/2\kappa}d_{k,l}\right)dt
			\\+\sum_{k=0}^Kr^{2\kappa s+2k-2}\int_1^{r^{-2\kappa}}y^{s-1}c_k(y)dy
			\\+O\left(\frac{r^{L+1}}{\Re s+(2K-1-L)/2\kappa}\right)
		\end{multline}
		uniformly for $\Re s>-(2K-1-L)/2\kappa$ and $0<r\leq1$.

		From \eqref{E:tr.infty}, we get
		\begin{multline}\label{E:tr.Melin.c}
			\int_1^\infty t^{s-1}\tr\left(e^{-t\rho_{r\lambda,r\mu,\nu}(A)}\right)dt
			=\sum_{l=0}^Lr^l\int_1^\infty t^{s-1}e_l(t)dt
			+O\left(\frac{r^{L+1}}{M-\Re s}\right)
		\end{multline}
		uniformly for $\Re s<M$ and $0<r\leq1$.

		From \eqref{E:e.d} we obtain,
		\[
			\int_0^{r^{2\kappa}}t^{s-1}\left(e_l(t)-\sum_{k=0}^Kt^{(2k-2-l)/2\kappa}d_{k,l}\right)dt
			=O\left(\frac{r^{2\kappa\Re s+2K-l}}{\Re s+(2K-l)/2\kappa}\right)
		\]
		uniformly for $\Re s>-(2K-l)/2\kappa$ and $0<r\leq1$.
		Hence,
		\begin{equation}\label{E:tr.Melin.d}
			\sum_{l=0}^Lr^l\int_0^{r^{2\kappa}}t^{s-1}\left(e_l(t)-\sum_{k=0}^Kt^{(2k-2-l)/2\kappa}d_{k,l}\right)dt
			=O\left(\frac{r^{2\kappa\Re s+2K}}{\Re s+(2K-L)/2\kappa}\right)
		\end{equation}
		uniformly for $\Re s>-(2K-L)/2\kappa$ and $0<r\leq1$.

		From \eqref{E:c.d} we get,
		\[
			\int_{r^{-2\kappa}}^\infty y^{s-1}\left(c_k(y)-\sum_{l=0}^{L'}y^{(2k-2-l)/2\kappa}d_{k,l}\right)dy
			=O\left(\frac{r^{L'+3-2k-2\kappa\Re s}}{(L'+3-2k)/2\kappa-\Re s}\right)
		\]
		uniformly for $\Re s<(L'+3-2k)/2\kappa$ and $0<r\leq1$.
		Hence,
		\begin{multline}\label{E:tr.Melin.e}
			\sum_{k=0}^Kr^{2\kappa s+2k-2}\int_{r^{-2\kappa}}^\infty y^{s-1}\left(c_k(y)-\sum_{l=0}^{L'}y^{(2k-2-l)/2\kappa}d_{k,l}\right)dy
			\\
			=O\left(\frac{r^{L'+1}}{(L'+3-2K)/2\kappa-\Re s}\right)
		\end{multline}
		uniformly for $\Re s<(L'+3-2K)/2\kappa$.

		Let $S$ be a compact subset of $\{s\in\C\setminus P:\Re s\geq\sigma\}$.
		Choose $\Sigma$ such that $\Re s\leq\Sigma$ for all $s\in S$.
		Suppose $J>2-4\kappa\sigma$, $L\geq N-1$, $2K>L+3-2\kappa\sigma$, $M>\Sigma$, and $L'>2K-3+2\kappa\Sigma$.
		Combining \eqref{E:tr.Melin.a}, \eqref{E:tr.Melin.b}, \eqref{E:tr.Melin.c}, \eqref{E:tr.Melin.d}, \eqref{E:tr.Melin.e} and using \eqref{E:tr.A.reg}, \eqref{E:el.reg}, \eqref{E:ck.reg}, we obtain
		\begin{multline}\label{E:dsP.0}
			\int_0^\infty t^{s-1}\tr\left(e^{-t\rho_{r\lambda,r\mu,\nu}(A)}\right)dt
			\\
			=\sum_{l=0}^Lr^l\int_0^\infty t^{s-1}e_l(t)dt
			+\sum_{k=0}^Kr^{2\kappa s+2k-2}\int_0^\infty y^{s-1}c_k(y)dy
			\\
			+r^{2\kappa s}\sum_{j=0}^J\frac{a_j(r)-\sum_{k=0}^Kr^{2k-2}b_{j,k}}{s+(j-3)/4\kappa}
			\\
			-\sum_{l=L+1}^{L'}r^l\sum_{k=0}^K\frac{d_{k,l}}{s+(2k-2-l)/2\kappa}
			+O\left(r^N\right)
		\end{multline}
		uniformly for $s\in S$ and $0<r\leq1$.

		Recall that $\frac1{\Gamma(s)}$ is an entire function that vanishes for $s\in-\N_0$.
		Hence, as $S$ does not intersect $P$, and since $L+1\geq N$ , we have
		\begin{equation}\label{E:dsP.1}
			\frac1{\Gamma(s)}\sum_{l=L+1}^{L'}r^l\sum_{k=0}^K\frac{d_{k,l}}{s+(2k-2-l)/2\kappa}
			=O\left(r^N\right)
		\end{equation}
		uniformly for $s\in S$ and $0<r\leq1$.
		Similarly, using \eqref{E:a.b} and $2\kappa\sigma+2K\geq N$, we find
		\begin{equation}\label{E:dsP.2}
			\frac{r^{2\kappa s}}{\Gamma(s)}\sum_{j=0}^J\frac{a_j(r)-\sum_{k=0}^Kr^{2k-2}b_{j,k}}{s+(j-3)/4\kappa}
			=O\left(r^N\right)
		\end{equation}
		uniformly for $s\in S$ and $0<r\leq1$.
		By \eqref{E:el.reg}, the poles of $\frac1{\Gamma(s)}\int_0^\infty t^{s-1}e_l(t)dt$ are all contained in $P$, hence disjoint from $S$.
		For each $l\geq N$ we thus have
		\begin{equation}\label{E:dsP.3}
			\frac{r^l}{\Gamma(s)}\int_0^\infty t^{s-1}e_l(t)dt
			=O\left(r^N\right)
		\end{equation}
		uniformly for $s\in S$ and $0<r\leq1$.
		By \eqref{E:ck.reg} the poles of $\frac1{\Gamma(s)}\int_0^\infty y^{s-1}c_k(y)dy$ are all contained in $P$, hence disjoint from $S$.
		For each $k\geq(N+2-2\kappa\sigma)/2$ we thus have
		\begin{equation}\label{E:dsP.4}
			\frac{r^{2\kappa s+2k-2}}{\Gamma(s)}\int_0^\infty y^{s-1}c_k(y)dy
			=O\left(r^N\right)
		\end{equation}
		uniformly for $s\in S$ and $0<r\leq1$.
		Combining \eqref{E:dsP.0} with \eqref{E:dsP.1}, \eqref{E:dsP.2}, \eqref{E:dsP.3}, \eqref{E:dsP.4}, and \eqref{E:zeta}, we obtain \eqref{E:zetas}.

		From \eqref{E:ck.reg}, using $\frac1{\Gamma(s)}=s+O(s^2)$ as $s\to0$, we obtain
		\[
			\left.\left(\frac1{\Gamma(s)}\int_0^\infty t^{s-1}c_k(y)dy\right)\right|_{s=0}
			=\begin{cases}
				b_{3,k}&\text{if $k=0$, and}\\
				b_{3,k}-d_{k,2k-2}&\text{if $k\geq1$.}
			\end{cases}
		\]
		As $b_{3,k}=0$, we obtain \eqref{E:zetap0} by differentiating \eqref{E:zetas} using Weierstra{\ss}' theorem on uniform convergence of holomorphic functions.

		From the asymptotic expansion in \eqref{E:zetap0} we read off \eqref{E:const}, using \eqref{E:e0.zeta} and the fact that $c_k$ vanishes for odd $k$.
	\end{proof}

	\begin{remark}\label{R:Melin.pos}
		For $\nu>0$, Theorem~\ref{T:Melin} remains true with $e_l=0$ and $d_{k,l}=0$, cf.~Remark~\ref{R:poly.pos} at the end of Section~\ref{S:poly.heat}.
		In this case $c_k$ also vanishes for odd $k$.
		In particular, we have an asymptotic expansion of the form
		\[
			\zeta_{\rho_{r\lambda,r\mu,\nu}(A)}'(0)
			=\sum_{k=0}^\infty r^{2k-2}\frac{\partial}{\partial s}\bigg|_{s=0}\frac1{\Gamma(s)}\int_0^\infty y^{s-1}c_k(y)dy
		\]
		as $r\to0$, with constant term 
		\[
			\LIM_{r\to0}\zeta_{\rho_{r\lambda,r\mu,\nu}(A)}'(0)=0.
		\]
	\end{remark}

	The following corollary is used in \cite[Lemma~9]{H23b}.

	\begin{corollary}
		In the situation of Theorem~\ref{T:Melin} suppose $(\lambda,\mu)\neq(0,0)$.
		If $N$ and $\sigma$ are real numbers and $\varepsilon>0$, then
		\begin{align*}
			\zeta_{\rho_{\lambda,\mu,-\nu}(A)}(s)
			&=\sum_{k=0}^{\left\lceil\frac{N-2\kappa\sigma}2\right\rceil}\nu^{-2\kappa s-3(k-1)/2}C_k^-(s)
			\\&\qquad\qquad
			+\sum_{l=0}^{\lceil N\rceil-1}\nu^{-\kappa s/2-3l/4}E_l(s)
			+O\bigl(\nu^{-\kappa\Re s/2-3N/4}\bigr)
			\\
			\zeta_{\rho_{\lambda,\mu,\nu}(A)}(s)
			&=\sum_{k=0}^{\left\lceil\frac{N-2\kappa\sigma}2\right\rceil}\nu^{-2\kappa s-3(k-1)/2}C_k^+(s)
			+O\bigl(\nu^{-\kappa\Re s/2-3N/4}\bigr)
		\end{align*}
		uniformly on compact subsets of $\{s\in\C\setminus P:\Re s\geq\sigma\}$ and for $\varepsilon\leq\nu<\infty$.
		Here
		\begin{align*}
			C_k^\pm(s)=\frac1{\Gamma(s)}\int_0^\infty y^{s-1}c_k^\pm(y)dy
			\quad\text{and}\quad
			E_l(s)=\frac1{\Gamma(s)}\int_0^\infty t^{s-1}e_l(t)dt
		\end{align*}
		are meromorphic functions on the entire complex plane whose poles all are contained in $P$.
		Moreover, $c_k^-(y)=c_k(y)$ and $e_l(t)$ are the functions from Theorem~\ref{T:poly} associated with $\lambda,\mu$ and $\nu=-1$; whereas $c_k^+(y)$ are the corresponding functions associated with $\lambda,\mu$ and $\nu=+1$, cf.~Remark~\ref{R:poly.pos}.
		In particular, $C_k^\pm(s)$ vanishes for odd $k$ and $E_l(s)$ vanishes for odd $l$.
	\end{corollary}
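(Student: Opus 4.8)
The plan is to reduce the corollary to Theorem~\ref{T:Melin} (for the first expansion) and to Remark~\ref{R:Melin.pos} (for the second) by a scaling argument based on the grading automorphisms of $\goe$. For $c>0$, let $\phi_c\in\Aut_\grr(\goe)$ be the graded automorphism with $\Phi=c\cdot\id$ on $\goe_{-1}$, cf.~\eqref{E:Aut.basis}; it scales $\goe_{-j}$ by $c^j$. Two facts drive the argument. First, by \eqref{E:Aut.rho.gen}, $\rho_{\lambda,\mu,\nu}\circ\phi_c\sim\rho_{c^3\lambda,c^3\mu,c^4\nu}$, since here $\Phi\det\Phi=c^3\id$ and ${\det}^2\Phi=c^4$. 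Second, since $A\in\mathcal U^{-2\kappa}(\goe)\otimes\eend(V)$ is homogeneous of degree $-2\kappa$, the induced automorphism of $\mathcal U(\goe)$ scales $A$ by $c^{2\kappa}$, so $(\rho_{\lambda,\mu,\nu}\circ\phi_c)(A)=c^{2\kappa}\rho_{\lambda,\mu,\nu}(A)$. Combining these, $\rho_{c^3\lambda,c^3\mu,c^4\nu}(A)$ and $c^{2\kappa}\rho_{\lambda,\mu,\nu}(A)$ are unitarily equivalent, hence isospectral, whence
\[
\zeta_{\rho_{c^3\lambda,c^3\mu,c^4\nu}(A)}(s)=c^{-2\kappa s}\,\zeta_{\rho_{\lambda,\mu,\nu}(A)}(s);
\]
neither this identity nor its proof uses the sign of $\nu$.

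Next I would specialize this to the families of Theorem~\ref{T:Melin} and Remark~\ref{R:Melin.pos}. Applying the identity with $(\lambda,\mu,\nu)$ replaced by $(r\lambda,r\mu,\mp1)$ and $c=r^{-1/3}$, so that $c^3r=1$ and $c^4=r^{-4/3}$, and setting $\nu:=r^{-4/3}$, i.e.\ $r=\nu^{-3/4}$, gives
\[
\zeta_{\rho_{\lambda,\mu,\mp\nu}(A)}(s)=\nu^{-\kappa s/2}\,\zeta_{\rho_{\nu^{-3/4}\lambda,\nu^{-3/4}\mu,\mp1}(A)}(s).
\]
For the lower sign I would insert the expansion \eqref{E:zetas} of Theorem~\ref{T:Melin}, applied to $\rho_{\lambda,\mu,-1}$ and its scaling family $\rho_{r\lambda,r\mu,-1}$, which is valid for $r=\nu^{-3/4}\le1$, i.e.\ $\nu\ge1$. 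Since $r^l=\nu^{-3l/4}$ and $r^{2\kappa s+2k-2}=\nu^{-3\kappa s/2-3(k-1)/2}$, multiplication by the prefactor $\nu^{-\kappa s/2}$ turns the two sums into $\sum_l\nu^{-\kappa s/2-3l/4}E_l(s)$ and $\sum_k\nu^{-2\kappa s-3(k-1)/2}C_k^-(s)$ (with $C_k^-(s)=C_k(s)$), and the remainder $O(r^N)$ into $O(\nu^{-\kappa\Re s/2-3N/4})$; the ranges of $l$ and $k$ are exactly those in Theorem~\ref{T:Melin}. This proves the first displayed formula for $\nu\ge1$, and it extends to $\varepsilon\le\nu<\infty$ because over the remaining compact range $\varepsilon\le\nu\le1$ all quantities involved are holomorphic in $s$ off $P$ and bounded uniformly on compacta. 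For the upper sign I would run the same substitution on the expansion of Remark~\ref{R:Melin.pos}, in which the $e_l$-terms are absent and the functions $c_k^+$ of Remark~\ref{R:poly.pos} replace the $c_k$, arriving at the second displayed formula.

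The remaining assertions are recorded in the material preceding Theorem~\ref{T:Melin}: the meromorphic continuations of $E_l(s)=\frac1{\Gamma(s)}\int_0^\infty t^{s-1}e_l(t)\,dt$ and $C_k^\pm(s)=\frac1{\Gamma(s)}\int_0^\infty y^{s-1}c_k^\pm(y)\,dy$ have all their poles in $P$, by \eqref{E:el.reg} and \eqref{E:ck.reg} (the $\nu=+1$ case via Remark~\ref{R:poly.pos}), and $C_k^\pm(s)$, respectively $E_l(s)$, vanishes for odd $k$, respectively odd $l$, because $c_k^\pm$, respectively $e_l$, does, by Theorem~\ref{T:poly}. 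I expect the only delicate point to be bookkeeping: reading off the exponent $2\kappa$ in $\phi_c(A)=c^{2\kappa}A$ from the homogeneity degree of $A$ with respect to the grading that scales $\goe_{-j}$ by $c^j$, and verifying that the exponents and summation ranges of Theorem~\ref{T:Melin} transform as claimed under the reparametrization $r=\nu^{-3/4}$; no new analytic input beyond Theorems~\ref{T:poly} and~\ref{T:Melin} is needed.
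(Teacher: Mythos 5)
Your proposal is correct and follows essentially the same route as the paper: establish the scaling identity $\zeta_{\rho_{\lambda,\mu,\mp\nu}(A)}(s)=\nu^{-\kappa s/2}\zeta_{\rho_{\nu^{-3/4}\lambda,\nu^{-3/4}\mu,\mp1}(A)}(s)$, substitute $r=\nu^{-3/4}$ into Theorem~\ref{T:Melin} (resp.\ Remark~\ref{R:Melin.pos}), and dispose of the range $\varepsilon\le\nu\le1$ by continuity and compactness. The only cosmetic difference is that you obtain the scaling identity from the graded-automorphism action \eqref{E:Aut.rho.gen} together with the homogeneity $(\rho\circ\phi_c)(A)=c^{2\kappa}\rho(A)$, whereas the paper reads it off directly from the heat-trace formula \eqref{E:tr.eAt} and \eqref{E:nu} via the Mellin transform \eqref{E:zeta}; both justifications are sound.
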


	\begin{proof}
		From \eqref{E:tr.eAt} and \eqref{E:nu} we obtain
		\[
			\tr\bigl(e^{-(\tau^{-2\kappa}t)\rho_{\tau^3\lambda,\tau^3\mu,\tau^4\nu}(A)}\bigr)
			=\tr\bigl(e^{-t\rho_{\lambda,\mu,\nu}(A)}\bigr)
		\]
		for $\tau>0$.
		Via the Melin transform in~\eqref{E:zeta} this yields
		\[
			\zeta_{\rho_{\tau^3\lambda,\tau^3\mu,\tau^4\nu}(A)}(s)
			=\tau^{-2\kappa s}\zeta_{\rho_{\lambda,\mu,\nu}(A)}(s).
		\]
		Hence, for $\nu>0$,
		\begin{align*}
			\zeta_{\rho_{\lambda,\mu,-\nu}(A)}(s)
			&=\nu^{-\kappa s/2}\zeta_{\rho_{\nu^{-3/4}\lambda,\nu^{-3/4}\mu,-1}(A)}(s)
			\\
			\zeta_{\rho_{\lambda,\mu,\nu}(A)}(s)
			&=\nu^{-\kappa s/2}\zeta_{\rho_{\nu^{-3/4}\lambda,\nu^{-3/4}\mu,1}(A)}(s).
		\end{align*}
		Combining the first equality with \eqref{E:zetas} and $r=\nu^{-3/4}$, we obtain the first expansion in the corollary, for $\varepsilon=1$.
		By continuity and compactness it remains true for all $\varepsilon>0$.
		Similarly, combining the second equality with Remark~\ref{R:Melin.pos}, we obtain the second expansion in the corollary.
	\end{proof}

\section{Proof of Theorem~\ref{T:tor.gen}}\label{S:proof}

	We begin by discussing the dependence of the torsion on the Hermitian inner product.
	To this end, suppose $h_{u,q}$ is a Hermitian inner product on $H^q(\goe)$ depending smoothly on a parameter $u\in\R$.

	The following can be traced back to the original work of Ray--Singer \cite[Theorem~2.1]{RS71}, see also \cite[Theorem~5.6]{BZ92} or \cite[Section~3.2]{RS12}.
	Proceeding exactly as in the proof of \cite[Lemma~2.13]{H22} one readily shows

	\begin{lemma}
		In each irreducible unitary representation $\rho$ of $G$ we have
		\[
			\frac\partial{\partial u}\sum_{q=0}^5(-1)^qN_q\tr\bigl(e^{-t\rho(\Delta_{h_u,q})}\bigr)
			=\kappa t\frac\partial{\partial t}\sum_{q=0}^5(-1)^q\tr\left(\dot G_{u,q}e^{-t\rho(\Delta_{h_u,q})}\right)
		\]
		where $\Delta_{h_u,q}$ is the Rumin--Seshadri operator defined in \eqref{E:Rumin.Seshadri}, and
		\[
			\dot G_{u,q}
			:=h_{u,q}^{-1}\tfrac\partial{\partial u}h_{u,q}
			\in\eend(H^q(\goe))
		\]
		denotes the logarithmic derivative of $h_{u,q}$.
	\end{lemma}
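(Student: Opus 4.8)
The plan is to follow closely the proof of \cite[Lemma~2.13]{H22}, which is itself a variant of the classical Ray--Singer computation \cite[Theorem~2.1]{RS71}. Fix a nontrivial irreducible unitary representation $\rho$ of $G$, abbreviate $\Delta_q:=\rho(\Delta_{h_u,q})$, and set $P_q:=\rho(D_q)^{*_{h_u}}\rho(D_q)$, $Q_q:=\rho(D_q)\rho(D_q)^{*_{h_u}}$, so that $\Delta_q=Q_{q-1}^{a_{q-1}}+P_q^{a_q}$ (with the convention $P_q=Q_q=0$ unless $0\le q\le4$); throughout I read $\dot G_{u,q}$ as $\id\otimes\dot G_{u,q}$ on $\mathcal H_\infty\otimes H^q(\goe)$. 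Since the heat kernel of a positive Rockland operator is a Schwartz function (see the proof of Theorem~\ref{T:asymp}), the operators $e^{-t\Delta_q}$, $\Delta_q^ke^{-t\Delta_q}$, $P_q^{a_q}e^{-t\Delta_q}$, $Q_{q-1}^{a_{q-1}}e^{-t\Delta_q}$ are all trace class, depend smoothly on $u$, and the trace manipulations below---differentiation under the integral, cyclicity---are justified.

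First I would use Duhamel's formula together with cyclicity of the trace to get $\frac\partial{\partial u}\tr\bigl(e^{-t\Delta_q}\bigr)=-t\,\tr\bigl(\dot\Delta_q\,e^{-t\Delta_q}\bigr)$ with $\dot\Delta_q=\frac\partial{\partial u}\Delta_q$. From $\rho(D_q)^{*_{h_u}}=h_{u,q}^{-1}\rho(D_q)^{*}h_{u,q+1}$ (here $*$ is the adjoint for a fixed reference structure) and the definition of $\dot G_{u,q}$ one obtains $\frac\partial{\partial u}\rho(D_q)^{*_{h_u}}=-\dot G_{u,q}\rho(D_q)^{*_{h_u}}+\rho(D_q)^{*_{h_u}}\dot G_{u,q+1}$, hence
\[
\dot P_q=-\dot G_{u,q}P_q+\rho(D_q)^{*_{h_u}}\dot G_{u,q+1}\rho(D_q),\qquad
\dot Q_{q-1}=-\rho(D_{q-1})\dot G_{u,q-1}\rho(D_{q-1})^{*_{h_u}}+Q_{q-1}\dot G_{u,q}.
\]

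The main step is to substitute these into $\tr\bigl(\dot\Delta_q\,e^{-t\Delta_q}\bigr)$ after expanding the derivatives of the powers by $\frac\partial{\partial u}A^n=\sum_{j=0}^{n-1}A^j\dot A\,A^{n-1-j}$, and then to collapse every term using the intertwining identities $\rho(D_q)P_q=Q_q\rho(D_q)$, $P_q\rho(D_q)^{*_{h_u}}=\rho(D_q)^{*_{h_u}}Q_q$, $\rho(D_q)\Delta_q=\Delta_{q+1}\rho(D_q)$ (all consequences of $D_{q+1}D_q=0$); the fact that $P_q$ and $Q_{q-1}$ commute and mutually annihilate, so that $e^{-t\Delta_q}$ commutes with both; and cyclicity of the trace. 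This should yield $\tr\bigl(\dot\Delta_q\,e^{-t\Delta_q}\bigr)=-X_q-X_{q-1}+Y_q+Y_{q-1}$, where $X_q:=a_q\tr\bigl(\dot G_{u,q}P_q^{a_q}e^{-t\Delta_q}\bigr)$ and $Y_q:=a_q\tr\bigl(\dot G_{u,q+1}Q_q^{a_q}e^{-t\Delta_{q+1}}\bigr)$: the summand $Q_{q-1}\dot G_{u,q}$ of $\dot Q_{q-1}$ produces $Y_{q-1}$, while the $\dot G_{u,q+1}$- and $\dot G_{u,q-1}$-pieces of $\dot P_q$ and $\dot Q_{q-1}$ are transported by the intertwiners into the adjacent degrees, producing $Y_q$ and $-X_{q-1}$ respectively.

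Finally I would multiply this by $(-1)^qN_q$, sum over $q$, and reindex: with $k_q=N_{q+1}-N_q$ the resulting sum telescopes to $\sum_q(-1)^qk_q(X_q-Y_q)$, so $\frac\partial{\partial u}\sum_{q=0}^5(-1)^qN_q\tr\bigl(e^{-t\Delta_q}\bigr)=-t\sum_q(-1)^qk_q(X_q-Y_q)$. Because $a_qk_q=\kappa$ is independent of $q$, we have $k_qX_q=\kappa\tr\bigl(\dot G_{u,q}P_q^{a_q}e^{-t\Delta_q}\bigr)$ and $k_qY_q=\kappa\tr\bigl(\dot G_{u,q+1}Q_q^{a_q}e^{-t\Delta_{q+1}}\bigr)$; reindexing the $Y$-sum by $q\mapsto q-1$ and recombining gives $\sum_q(-1)^qk_q(X_q-Y_q)=\kappa\sum_q(-1)^q\tr\bigl(\dot G_{u,q}(P_q^{a_q}+Q_{q-1}^{a_{q-1}})e^{-t\Delta_q}\bigr)=\kappa\sum_q(-1)^q\tr\bigl(\dot G_{u,q}\Delta_qe^{-t\Delta_q}\bigr)$. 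Since $\Delta_qe^{-t\Delta_q}=-\frac\partial{\partial t}e^{-t\Delta_q}$, this equals $-\kappa\frac\partial{\partial t}\sum_q(-1)^q\tr\bigl(\dot G_{u,q}e^{-t\Delta_q}\bigr)$, and assembling the signs yields the claimed identity. The one real obstacle is the middle step: one must be scrupulous about distinguishing the fixed adjoint from the $h_u$-adjoint and about the $q$-dependence of the exponents $a_q$, since it is precisely the relation $a_qk_q=\kappa$ together with the weights $N_q$ that makes both sides match; once the intertwining relations are set up, each term collapses mechanically by cyclicity.
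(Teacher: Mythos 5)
Your proposal is correct and follows essentially the same route as the paper, whose proof consists of the remark that one proceeds exactly as in the Ray--Singer style variational argument of \cite[Lemma~2.13]{H22}: Duhamel's formula, the variation formula for the $h_u$-adjoint via $\dot G_{u,q}$, the intertwining relations coming from $D_{q+1}D_q=0$, cyclicity of the trace, and the telescoping with the weights $N_q$ together with $a_qk_q=\kappa$. Your bookkeeping of the terms $X_q$, $Y_q$ and the final reindexing reproduce the claimed identity, so no gap remains beyond the routine trace-class and functional-calculus justifications you already indicate.
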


	For the zeta function this implies
	\[
		\frac\partial{\partial u}\sum_{q=0}^5(-1)^qN_q\zeta_{\rho(\Delta_{h_u,q})}(s)
		=-\sum_{q=0}^5(-1)^q\frac{\kappa s}{\Gamma(s)}\int_0^\infty t^{s-1}\tr\left(\dot G_{u,q}e^{-t\rho(\Delta_{h_u,q})}\right)dt.
	\]
	If $\rho=\rho_{\lambda,\mu,\nu}$ is a generic representation, then the constant term in the asymptotic expansion \eqref{E:asymp.gen} in Theorem~\ref{T:asymp} vanishes, and thus
	\[
		\frac\partial{\partial u}\sum_{q=0}^5(-1)^qN_q\zeta_{\rho_{\lambda,\mu,\nu}(\Delta_{h_u,q})}'(0)=0.
	\]
	Combining this with \eqref{E:tor.Delta} and \eqref{E:def.det.Delta}, we obtain

	\begin{lemma}\label{L:tau.indep.h}
		For a generic representation $\rho_{\lambda,\mu,\nu}$ of $G$, the torsion $\tau_h(\rho_{\lambda,\mu,\nu}(D))$ is independent of the graded Hermitian inner product $h$ on $H^*(\goe)$.
	\end{lemma}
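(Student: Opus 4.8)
The plan is to fix a smooth one-parameter family $h_u$ of graded Hermitian inner products on $H^*(\goe)$ and to show that $\log\tau_{h_u}(\rho_{\lambda,\mu,\nu}(D))$ does not depend on $u$; since the space of graded Hermitian inner products is convex, hence connected, this yields the assertion. By \eqref{E:tor.Delta} and \eqref{E:def.det.Delta} one has $\log\tau_{h_u}(\rho_{\lambda,\mu,\nu}(D))=\frac1{2\kappa}\sum_{q=0}^5(-1)^qN_q\zeta'_{\rho_{\lambda,\mu,\nu}(\Delta_{h_u,q})}(0)$, so it suffices to prove $\frac\partial{\partial u}\sum_{q=0}^5(-1)^qN_q\zeta'_{\rho_{\lambda,\mu,\nu}(\Delta_{h_u,q})}(0)=0$.

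First I would invoke the variation formula established in the preceding (unnumbered) Lemma, which expresses $\frac\partial{\partial u}\sum_{q=0}^5(-1)^qN_q\tr\bigl(e^{-t\rho_{\lambda,\mu,\nu}(\Delta_{h_u,q})}\bigr)$ as $\kappa t\,\partial_t$ of the alternating sum $\sum_{q=0}^5(-1)^q\tr\bigl(\dot G_{u,q}e^{-t\rho_{\lambda,\mu,\nu}(\Delta_{h_u,q})}\bigr)$ with $\dot G_{u,q}=h_{u,q}^{-1}\partial_uh_{u,q}$. Applying the Mellin transform $\frac1{\Gamma(s)}\int_0^\infty t^{s-1}(\,\cdot\,)\,dt$ to both sides, and integrating by parts in $t$ to turn $t\partial_t$ into multiplication by $-s$, gives
\[
 \frac\partial{\partial u}\sum_{q=0}^5(-1)^qN_q\zeta_{\rho_{\lambda,\mu,\nu}(\Delta_{h_u,q})}(s)
 =-\kappa s\sum_{q=0}^5(-1)^qM_q(s),
\]
where $M_q(s):=\frac1{\Gamma(s)}\int_0^\infty t^{s-1}\tr\bigl(\dot G_{u,q}e^{-t\rho_{\lambda,\mu,\nu}(\Delta_{h_u,q})}\bigr)\,dt$, valid for $\Re s$ large and then by meromorphic continuation.

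The decisive input is Theorem~\ref{T:asymp}(III), applied with $A=\Delta_{h_u,q}$ (a positive Rockland operator homogeneous of order $2\kappa$) and $B=\dot G_{u,q}$: in the generic representation the heat trace $\tr\bigl(Be^{-t\rho_{\lambda,\mu,\nu}(A)}\bigr)$ admits an expansion $\sum_{j\ge0}t^{(j-3)/4\kappa}a_j$ whose coefficients $a_j$ vanish for odd $j$, so in particular the constant ($j=3$) coefficient vanishes. Since $1/\Gamma(s)$ has a simple zero at $s=0$, the function $M_q$ is holomorphic at $s=0$ and $M_q(0)$ equals that vanishing constant coefficient, whence $M_q(0)=0$. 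Therefore the right-hand side $-\kappa s\sum_q(-1)^qM_q(s)$ vanishes to second order at $s=0$, so both its value and its first derivative at $s=0$ are zero; differentiating the displayed identity in $s$ at $s=0$ then gives $\frac\partial{\partial u}\sum_{q=0}^5(-1)^qN_q\zeta'_{\rho_{\lambda,\mu,\nu}(\Delta_{h_u,q})}(0)=0$, which proves the lemma.

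The hard part is not conceptual but bookkeeping: one must justify that $\partial_u$ commutes with the analytic continuation in $s$ and with evaluation of the derivative at $s=0$. This is legitimate because the heat-trace asymptotics of Theorem~\ref{T:asymp} hold locally uniformly in the parameters — here in $u$, since $\Delta_{h_u,q}$ and $\dot G_{u,q}$ depend smoothly on $u$ — so the remainder estimates in the standard Mellin-transform splitting of $\zeta_{\rho_{\lambda,\mu,\nu}(\Delta_{h_u,q})}(s)$ are uniform on $u$-compacta; everything else (the variation formula and the vanishing of the odd heat coefficients in the generic case) is already supplied by the preceding results.
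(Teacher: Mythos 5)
Your proposal is correct and follows essentially the same route as the paper: the variation formula for the weighted alternating heat traces, its Mellin transform producing the factor $-\kappa s$, and the vanishing of the constant ($j=3$, odd) coefficient in the generic heat expansion of Theorem~\ref{T:asymp}(III) applied with $B=\dot G_{u,q}$, which kills the value and first derivative at $s=0$. The paper states this more tersely but the argument, including the reduction via \eqref{E:tor.Delta} and \eqref{E:def.det.Delta}, is the same.
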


	Combining Lemma~\ref{L:tau.indep.h} with \eqref{E:Aut.tau}, we conclude that $\tau_h(\rho_{\lambda,\mu,\nu}(D))$ is constant on the orbits of $\Aut_\grr(\goe)$ acting on the space of generic representations.
	As pointed out at the very end of Section~\ref{SS:Aut.dual}, this implies that $\tau_h(\rho_{\lambda,\mu,\nu}(D))$ is constant in $(\lambda,\mu,\nu)$ too, for it clearly depends smoothly on these parameters in view of \eqref{E:tr.k.alpha}.

	On the other hand, for $\nu<0$, Theorem~\ref{T:Melin} provides an asymptotic expansion of the form 
	\begin{equation}\label{E:asymp.tau2}
		\zeta_{\rho_{r\lambda,r\mu,\nu}(\Delta_{h,q})}'(0)
		\sim\sum_{k=0}^\infty z_kr^{2k-2}+\sum_{k=1}^\infty\tilde z_kr^{4k-2}\log r
	\end{equation}
	as $r\to0$, with constant term
	\begin{equation}\label{E:LIM.zeta}
		\LIM_{r\to0}\zeta_{\rho_{r\lambda,r\mu,\nu}(\Delta_{h,q})}'(0)
		=z_1
		=\zeta_{\rho_{\sqrt{|\nu|}}(\Delta_{h,q})}'(0)+\zeta_{\rho_{-\sqrt{|\nu|}}(\Delta_{h,q})}'(0).
	\end{equation}
	Recall here that $d_{k,l}$ vanishes for odd $k$ and $e_l$ vanishes for odd $l$ according to Theorem~\ref{T:poly}.
	For the analytic torsion this implies, cf.~\eqref{E:tor.Delta},
	\begin{equation}\label{E:LIM.tau2}
		\LIM_{r\to0}\log\tau_h\bigl(\rho_{r\lambda,r\mu,\nu}(D)\bigr)
		=\log\tau_h\bigl(\rho_{\sqrt{|\nu|}}(D)\bigr)+\log\tau_h\bigl(\rho_{-\sqrt{|\nu|}}(D)\bigr).
	\end{equation}
	As $\tau_h\bigl(\rho_{r\lambda,r\mu,\nu}(D)\bigr)$ is constant in $r$, we conclude
	\begin{equation}\label{E:tau.tau}
		\tau_h\bigl(\rho_{\lambda,\mu,\nu}(D)\bigr)
		=\tau_h\bigl(\rho_{\sqrt{|\nu|}}(D)\bigr)\cdot\tau_h\bigl(\rho_{-\sqrt{|\nu|}}(D)\bigr).
	\end{equation}
	If $h=h_g$ is induced from a graded Euclidean inner product $g$ on $\goe$ such that $b_g$ is proportional to $g$, then both factors on the right hand side of \eqref{E:tau.tau} are one according to \eqref{E:tor.Schroedinger} in Theorem~\ref{T:dets.Schroedinger}.
	Hence, 
	\begin{equation}\label{E:tttau}
		\tau_h\bigl(\rho_{\lambda,\mu,\nu}(D)\bigr)=1,
	\end{equation}
	initially only for $\nu<0$ and these special $h=h_g$.
	However, since the torsion is constant in $h$ and $(\lambda,\mu,\nu)$, the latter equality remains true for arbitrary $\nu$ and $h$.
	Hence, Theorem~\ref{T:tor.gen} holds for all generic representations.
	\footnote{Proceeding exactly as in the derivation of \eqref{E:tau.tau}, but using Remark~\ref{R:Melin.pos} istead of Theorem~\ref{T:Melin}, we obtain an alternative derivation of \eqref{E:tttau}, for $\nu>0$ and all $h$. As the left hand side in \eqref{E:tttau} is constant, this equality remains true for all $\nu$. This argument does not rely on Theorem~\ref{T:dets.Schroedinger}.}

	To deal with the Schr\"odinger representations, we now consider an arbitrary graded Euclidean inner product $g$ on $\goe$.
	Note that there exists a graded automorphism $\phi\in\Aut_\grr(\goe)$ that preserves $g$ and acts by $-1$ on $\goe_{-2}$.
	Indeed, in by \eqref{E:Aut.basis}, \eqref{E:Aut}, and \eqref{E:bg} this amounts to $\phi_{-1}\in\GL(\goe_{-1})$ being a reflection that preserves $g$ and $b_g$.
	Such reflections exist in view of the principal axis theorem.
	Clearly, $\phi\cdot h_g=h_g$ and $\rho_\hbar\circ\phi=\rho_{-\hbar}$, see \eqref{E:Aut.hg} and \eqref{E:Aut.rho.h}.
	Applying \eqref{E:Aut.tau}, we obtain 
	\[
		\tau_{h_g}(\rho_{-\hbar}(D))=\tau_{h_g}(\rho_\hbar(D)).
	\]
	From \eqref{E:tau.tau} we get $\tau_{h_g}(\rho_\hbar(D))\cdot\tau_{h_g}(\rho_{-\hbar}(D))=1$.
	Combining the latter two equalities and using the positivity of the torsion, we conlude $\tau_{h_g}(\rho_\hbar(D))=1$.
	This completes the proof of Theorem~\ref{T:tor.gen}.
	
	\begin{remark}\label{R:Voros}
		Let us point out more explicitly how the asymptotic in \eqref{E:asymp.tau2} differs from the one considered by Voros \cite{V04}.
		For simplicity we assume $\lambda^2+\mu^2=1$.
		From \eqref{E:rep.gen.X1} and \eqref{E:rep.gen.X2} we obtain
		\[
			\rho_{r\lambda,r\mu,\nu}(\Delta_{h,0})
			=r^{2/3}\left(-\partial_\theta^2+\tfrac14\left(\theta^2+\nu r^{-4/3}\right)^2\right).
		\]
		This operator is unitarily equivalent to
		\begin{equation}\label{E:nonVoros}
			-\partial_\theta^2+g\theta^4+\frac\nu2\theta^2+\frac{\nu^2}{16g},\qquad\text{where } g=\frac{r^2}4,
		\end{equation}
		via the unitary scaling $f(\theta)\leftrightarrow\sqrt\gamma f(\gamma\theta)$ with $\gamma=r^{1/3}$.
		Of course $r\to0$ corresponds to $g\to0$.
		However, this differs significantly from the perturbation considered by Voros \cite[Eq.~(5.7)]{V04} because
		the constant part $\frac{\nu^2}{16g}$ of the potential in \eqref{E:nonVoros} depends on $g$ in a singular manner, while the constant in Voros' potential remains fixed.
		Correspondingly, on the right hand side of \cite[Eq.~(5.7)]{V04} the determinant of only one (shifted) oscillator appears, while for $\nu<0$ in \eqref{E:LIM.zeta} two harmonic oscillators contribute to the constant term, and for $\nu>0$ in Remark~\ref{R:Melin.pos} the constant term vanishes.
	\end{remark}

\appendix

\section{The abelian case}\label{S:abelian}

	In this section we consider the abelian Lie algebra $\aoe=\R^n$ with the trivial grading $\aoe=\aoe_{-1}$.
	The Rumin complex for the corresponding simply connected Lie group $A=\R^n$ coincides with the de~Rham complex.
	We will compute the determinants of the differentials and the torsion in irreducible unitary representations, see \eqref{E:det.abel} and \eqref{E:tor.abel} below.

	Let $g$ be a Euclidean inner product on $\aoe$ and let $h_g$ denote the induced Hermitian inner product on $H^q(\aoe)=\Lambda^q\aoe^*\otimes\C$.
	Choose an orthonormal basis $X_1,\dotsc,X_n$ of $\aoe$ and let $\chi^1,\dotsc,\chi^n$ denote the dual basis of $\aoe^*$.

	As the Rumin complex coincides with the de~Rham complex, we have
	\[
		D_q=\sum_{j=1}^nX_j\otimes e_{\chi^j}\in\mathcal U^{-1}(\aoe)\otimes L\bigl(\Lambda^q\aoe^*\otimes\C,\Lambda^{q+1}\aoe^*\otimes\C\bigr)
	\]
	where $e_\chi$ denotes the exterior product with $\chi$, that is, $e_\chi\beta=\chi\wedge\beta$.

	For $\alpha\in\aoe^*$ we let $\rho_\alpha$ denote the 1-dimensional unitary representations of $A$ such that $\rho_\alpha(X)=\mathbf i\alpha(X)$ for $X\in\aoe$.
	Up to unitary equivalence, these are all the irreducible unitary representations of $A$.
	In such a representation we have
	\[
		\rho_\alpha(D_q)=\mathbf i\sum_{j=1}^n\alpha(X_j)e_{\chi^j}
		\qquad\text{and}\qquad
		\rho_\alpha(D_q)^{*_{h_g}}=-\mathbf i\sum_{k=1}^n\alpha(X_k)i_{X_k}
	\]
	where $i_X$ denotes the contraction with $X$.
	As $e_{\chi^j}i_{X_k}+i_{X_k}e_{\chi^j}=\delta^j_k$, the Laplacian acts by the same scalar in each degree,
	\[
		\rho_\alpha(D_{q-1})\rho_\alpha(D_{q-1})^{*_{h_g}}+\rho_\alpha(D_q)^{*_{h_g}}\rho_\alpha(D_q)
		=\|\alpha\|_g^2.
	\]

	Assume $\alpha\neq0$, i.e., $\rho_\alpha$ is a nontrivial representation.
	Then the Rumin complex is exact and the preceding equation yields
	\[
		\dets|\rho_\alpha(D_{q-1})|_{h_g}\cdot\dets|\rho_\alpha(D_q)|_{h_g}
		=\|\alpha\|_g^{\binom nq}.
	\]
	Using the recurrence relation for binomial coefficients, we obtain
	\begin{equation}\label{E:det.abel}
		\dets|\rho_\alpha(D_q)|_{h_g}=\|\alpha\|_g^{\binom{n-1}q}.
	\end{equation}
	For the torsion we find
	\begin{equation}\label{E:tor.abel}
		\tau_{h_g}(\rho_\alpha(D))
		=\begin{cases}
			\|\alpha\|_g&\text{for $n=1$, and}
			\\
			1&\text{if $n\geq2$.}
		\end{cases}
	\end{equation}

\section{The 3-dimensional Heisenberg group}\label{S:Heisenberg}

	In this section we consider the 3-dimensional Heisenberg algebra $\hoe=\hoe_{-2}\oplus\hoe_{-1}$ and the corresponding simply connected 3-dimensional Heisenberg group $H$.
	We will compute the determinants of the Rumin differentials and the torsion of the Rumin complex in all nontrivial irreducible unitary representations.
	The results are summarized in Propositions~\ref{P:det.Heisenberg.scalar} and \ref{P:det.Heisenberg.Schroedinger} below.
	For scalar representations this is entirely elementary.
	In the Schr\"odinger representations we will be able to reuse the crucial Lemma~\ref{L:specA2} from Section~\ref{SS:spec1.Schroedinger}.
	The Rumin--Seshadri analytic torsion of contact 3-manifolds has been computed in \cite{RS12} and \cite{AQ22}.

	Let $X_1,X_2$ be a basis of $\hoe_{-1}$ and put $X_3=[X_1,X_2]$.
	Then $X_1,X_2,X_3$ is a graded basis of $\hoe$.
	Let $\chi^1,\chi^2,\chi^3$ denote the graded dual basis of $\hoe^*=\hoe_1^*\oplus\hoe_2^*$.
	One readily checks that the following forms induce bases of $H^q(\hoe)=\frac{\ker\partial_q}{\img\partial_{q-1}}$:
	\begin{equation}\label{E:bases.Heisenberg}
		H^0(\hoe):1;\qquad
		H^1(\hoe):\chi^1,\chi^2;\qquad
		H^2(\hoe):\chi^{13},\chi^{23};\qquad
		H^3(\hoe):\chi^{123},
	\end{equation}
	where we use the notation $\chi^{j_1\cdots j_k}=\chi^{j_1}\wedge\cdots\wedge\chi^{j_k}$.
	With respect to these bases, the Rumin differentials $D_q\in\mathcal U(\hoe)\otimes L(H^q(\hoe),H^{q+1}(\hoe))$ \cite{R90,R94,R00} are represented by the matrices
	\begin{equation}\label{E:D.Heisenberg}
		D_0=\begin{pmatrix}X_1\\X_2\end{pmatrix},\quad
		D_1=\begin{pmatrix}-X_{12}-X_3&X_{11}\\-X_{22}&X_{21}-X_3\end{pmatrix},\quad
		D_2=\begin{pmatrix}-X_2&X_1\end{pmatrix}.
	\end{equation}
	By naturality, $\phi\cdot D_q=D_q$ for all for all graded automorphisms $\phi\in\Aut_\grr(\hoe)$, where the dot denotes the natural left action on $\mathcal U(\hoe)\otimes L(H^q(\hoe),H^{q+1}(\hoe))$.

	Let $g$ be a graded Euclidean inner product on $\hoe$ and let $a_g$ denote the positive real number such that 
	\[
		g\bigl([X,Y],[X,Y]\bigr)=a_g\cdot\Bigl(g(X,X)g(Y,Y)-g(X,Y)^2\Bigr)
	\]
	holds for all $X,Y\in\hoe_{-1}$.
	Clearly, $a_{\phi\cdot g}=a_g$ for each $\phi\in\Aut_\grr(\hoe)$.
	Restriction provides an isomorphism $\Aut_\grr(\hoe)=\GL(\hoe_{-1})\cong\GL_2(\R)$.
	Hence, we may assume w.l.o.g.\ that $X_1,X_2$ is an orthonormal basis of $\hoe_{-1}$.
	Then the induced Hermitian inner products $h_{g,q}$ on $H^q(\hoe)$ are represented by the matrices
	\begin{equation}\label{E:hq.Heisenberg}
		h_{g,0}=\begin{pmatrix}1\end{pmatrix},\quad
		h_{g,1}=\begin{pmatrix}1\\&1\end{pmatrix},\quad
		h_{g,2}=\frac1{a_g}\begin{pmatrix}1\\&1\end{pmatrix},\quad
		h_{g,3}=\frac1{a_g}\begin{pmatrix}1\end{pmatrix},
	\end{equation}
	with respect to the bases provided by the forms in \eqref{E:bases.Heisenberg}.

	The simply connected 3-dimensional Heisenberg group $H$ has two types of irreducible unitary representations: the scalar representations which factor through $H/[H,H]$, and the Schr\"odinger representations.
	We will discuss the determinants in each type of representation separately in the next two subsections.

\subsection{Scalar representations}

	Suppose $\alpha\in\hoe_1^*$.
	For the corresponding scalar representation $\rho_\alpha$ of $H$ on $\C$ we have
	\[
		\rho_\alpha(X_1)=\mathbf iv,\qquad
		\rho_\alpha(X_2)=\mathbf iw,\qquad
		\rho_\alpha(X_3)=0,
	\]
	where $v=\alpha(X_1)$, and $w=\alpha(X_2)$.
	Combining this with \eqref{E:D.Heisenberg}, we see that in this representation the Rumin differentials are
	\begin{equation}\label{E:D0.Heisenberg.scalar}
		\rho_\alpha(D_0)=\mathbf i\begin{pmatrix}v\\w\end{pmatrix},\qquad
		\rho_\alpha(D_2)=\mathbf i\begin{pmatrix}-w&v\end{pmatrix},
	\end{equation}
	and
	\begin{equation}\label{E:D1.Heisenberg.scalar}
		\rho_\alpha(D_1)=\begin{pmatrix}vw&-v^2\\w^2&-vw\end{pmatrix}=\begin{pmatrix}v\\w\end{pmatrix}\begin{pmatrix}w&-v\end{pmatrix}.
	\end{equation}
	As $\rho_\alpha(D_q)^{*_{h_g}}=h_{q+1}^{-1}\rho_\alpha(D_q)^*h_q$, we have
	\begin{equation}\label{E:det.Heisenberg}
		\dets|\rho_\alpha(D_q)|_{h_g}
		=\dets^{1/2}\Bigl(h_{g,q}^{-1}\rho_\alpha(D_q)^*h_{g,q+1}\rho_\alpha(D_q)\Bigr).
	\end{equation}
	Clearly, $\|\alpha\|_g=(v^2+w^2)^{1/2}$.
	
	Assume $\alpha\neq0$. Hence $\rho_\alpha$ is a nontrivial representation, and the Rumin complex is exact.
	Combining \eqref{E:hq.Heisenberg}, \eqref{E:D0.Heisenberg.scalar}, and \eqref{E:det.Heisenberg}, we obtain
	\[
		\dets|\rho_\alpha(D_0)|_{h_g}
		={\det}^{1/2}\bigl(\rho_\alpha(D_0)^*\rho_\alpha(D_0)\bigr)
		=(v^2+w^2)^{1/2}
		=\|\alpha\|_g.
	\]
	Using the factorization for $D_1$ provided in \eqref{E:D1.Heisenberg.scalar} and \eqref{E:specAB}, we find
	\begin{align*}
		\dets|\rho_\alpha(D_1)|_{h_g}
		&=\dets^{1/2}\bigl(\rho_\alpha(D_1)^*h_{g,2}\rho_\alpha(D_1)\bigr)
		\\
		&=\frac1{\sqrt{a_g}}
		{\det}^{1/2}\Bigl(\begin{pmatrix}w&-v\end{pmatrix}\begin{pmatrix}w&-v\end{pmatrix}^*\Bigr)
		{\det}^{1/2}\left(\begin{pmatrix}v\\w\end{pmatrix}^*\begin{pmatrix}v\\w\end{pmatrix}\right)
		\\
		&=\frac{v^2+w^2}{\sqrt{a_g}}
		=\frac{\|\alpha\|_g^2}{\sqrt{a_g}}.
	\end{align*}
	By Poincar\'e duality, $\dets|\rho_\alpha(D_2)|_{h_g}=\dets|\rho_\alpha(D_0)|_{h_g}$.

	Summarizing, we obtain the following analogue of Theorem~\ref{T:dets.scalar}.

	\begin{proposition}\label{P:det.Heisenberg.scalar}
		Let $D_q\in\mathcal U(\hoe)\otimes L(H^q(\hoe),H^{q+1}(\hoe))$ denote the Rumin differentials associated with the 3-dimensional Heisenberg algebra $\hoe=\hoe_{-2}\oplus\hoe_{-1}$.
		Moreover, let $h_g$ denote the Hermitian inner product on $H^*(\hoe)$ induced from a graded Euclidean inner product $g$ on $\hoe$.
		Then, in the nontrivial scalar representation $\rho_\alpha$ on $\C$ corresponding to $\alpha\in\hoe_1^*$, we have:
		\begin{align*}
			\dets\bigl(|\rho_\alpha(D_2)|_{h_g}\bigr)
			=\dets\bigl(|\rho_\alpha(D_0)|_{h_g}\bigr)
			&=\|\alpha\|_g,
			\\
			\dets\bigl(|\rho_\alpha(D_1)|_{h_g}\bigr)
			&=\frac{\|\alpha\|_g^2}{\sqrt{a_g}}.
		\end{align*}
		In particular, the torsion is 
		\[
			\tau_{h_g}\bigl(\rho_\alpha(D)\bigr)=\sqrt{a_g}.
		\]
	\end{proposition}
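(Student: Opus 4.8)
The plan is to run the argument of Section~\ref{S:scalar} in the much simpler Heisenberg setting, where everything is $2\times2$ (or $1\times1$) linear algebra. Since $\rho_\alpha$ is $1$-dimensional, the Rumin complex is finite dimensional, so $\dets$ is just the product of the nonzero eigenvalues. As a preliminary reduction I would invoke naturality $\phi\cdot D_q=D_q$ together with the identification $\Aut_\grr(\hoe)=\GL(\hoe_{-1})$ to observe, in complete analogy with \eqref{E:Aut.det}, \eqref{E:Aut.hg}, \eqref{E:Aut.rho.alpha}, that $\dets|\rho_\alpha(D_q)|_{h_g}$ depends only on the automorphism-invariant quantities $\|\alpha\|_g$ and $a_g$. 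This allows me to assume without loss of generality that $X_1,X_2$ is a $g$-orthonormal basis of $\hoe_{-1}$, so that the $h_{g,q}$ are the scalar matrices in \eqref{E:hq.Heisenberg} and $\|\alpha\|_g^2=v^2+w^2$ with $v=\alpha(X_1)$, $w=\alpha(X_2)$.

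Next I would compute the three determinants directly from the matrices in \eqref{E:D0.Heisenberg.scalar} and \eqref{E:D1.Heisenberg.scalar}. For $q=0$ the formal adjoint is the conjugate transpose (since $h_{g,0}=(1)$ and $h_{g,1}=I$), so $\rho_\alpha(D_0)^{*_{h_g}}\rho_\alpha(D_0)=v^2+w^2$ and $\dets|\rho_\alpha(D_0)|_{h_g}=\|\alpha\|_g$. For $q=1$ I would feed the rank-one factorization $\rho_\alpha(D_1)=\begin{pmatrix}v\\w\end{pmatrix}\begin{pmatrix}w&-v\end{pmatrix}$ together with $h_{g,2}=a_g^{-1}I$ into $\rho_\alpha(D_1)^{*_{h_g}}\rho_\alpha(D_1)=a_g^{-1}\rho_\alpha(D_1)^*\rho_\alpha(D_1)$, and apply \eqref{E:specAB} to cycle the factors past one another; the nonzero spectrum collapses to the single scalar $a_g^{-1}(v^2+w^2)^2$, whence $\dets|\rho_\alpha(D_1)|_{h_g}=\|\alpha\|_g^2/\sqrt{a_g}$. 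For $q=2$ I would either quote Poincar\'e duality for the Heisenberg Rumin complex (the analogue of \eqref{E:Hodge.det}), which gives $\dets|\rho_\alpha(D_2)|_{h_g}=\dets|\rho_\alpha(D_0)|_{h_g}=\|\alpha\|_g$, or, to stay self-contained, check directly that $\rho_\alpha(D_2)^{*_{h_g}}\rho_\alpha(D_2)$ has nonzero eigenvalue $v^2+w^2$ (the factors $a_g$ from $h_{g,2}^{-1}$ and $a_g^{-1}$ from $h_{g,3}$ cancel).

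Finally, using exactness of the complex for nontrivial $\alpha$, the torsion is the alternating product of these determinants,
\[
  \tau_{h_g}(\rho_\alpha(D))=\frac{\dets|\rho_\alpha(D_0)|_{h_g}\cdot\dets|\rho_\alpha(D_2)|_{h_g}}{\dets|\rho_\alpha(D_1)|_{h_g}}=\frac{\|\alpha\|_g^2}{\|\alpha\|_g^2/\sqrt{a_g}}=\sqrt{a_g}.
\]
I do not expect a genuine obstacle here: the analysis that makes the $(2,3,5)$ case delicate is entirely absent because the representation is finite dimensional. The only points that call for a little care are the bookkeeping in the preliminary automorphism-invariance reduction and, in the $D_1$ step, keeping track of the cyclic permutation of $\begin{pmatrix}v\\w\end{pmatrix}$, $\begin{pmatrix}w&-v\end{pmatrix}$ and their adjoints via \eqref{E:specAB} so that it really produces the scalar $(v^2+w^2)^2$.
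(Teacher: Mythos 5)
Your proposal is correct and follows essentially the same route as the paper: the automorphism-invariance reduction to a $g$-orthonormal basis $X_1,X_2$, the direct computation for $D_0$, the rank-one factorization of $\rho_\alpha(D_1)$ combined with \eqref{E:specAB} and $h_{g,2}=a_g^{-1}I$ to get the single nonzero eigenvalue $(v^2+w^2)^2/a_g$, Poincar\'e duality (or the equivalent direct check) for $D_2$, and the alternating product for the torsion. No gaps.
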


\subsection{Schr\"odinger representations}\label{SS:contact.Schroedinger}

	Fix $0\neq\hbar$ and consider the Schr\"odinger representation $\rho_\hbar$ of $H$ on $L^2(\R,d\theta)$, cf.~\eqref{E:Schroedinger}.
	
	Note that the operator $D_0$ in \eqref{E:D.Heisenberg} coincides with the operator in \eqref{E:D0.Schroedinger}.
	Hence, using \eqref{E:hq.Heisenberg},
	\begin{equation}\label{E:D0Del}
		\rho_\hbar(D_0)^{*_{h_g}}\rho_\hbar(D_0)=\Delta,
	\end{equation}
	where $\Delta$ denotes the operator considered in Section~\ref{S:Schroedinger}, see \eqref{E:Delta}.
	From \eqref{E:zeta0.Schroedinger} we obtain 
	\[
		\zeta_{|\rho_\hbar(D_0)|_{h_g}}(s)
		=|\hbar|^{-s/2}\cdot(1-2^{-s/2}\bigr)\cdot\zeta_\Riem(s/2)
	\]
	and then
	\begin{equation}\label{E:det.D0.Heisenberg.Schroedinger}
		\zeta_{|\rho_\hbar(D_0)|_{h_g}}(0)=0,
		\qquad
		\zeta_{|\rho_\hbar(D_0)|_{h_g}}'(0)=-\tfrac14\log2.
	\end{equation}

	For the operator $D_1$ in \eqref{E:D.Heisenberg} we have $D_1=J^*A$ and therefore, using \eqref{E:hq.Heisenberg}, 
	\begin{equation}\label{E:D1A2}
		\rho_\hbar(D_1)^{*_{h_g}}\rho_\hbar(D_1)=\tfrac1{a_g}A^2,
	\end{equation}
	where $A$ and $J$ denote the operators considered in Section~\ref{S:Schroedinger}, see \eqref{E:A} and \eqref{E:J}.
	Using \eqref{E:D1A2} and Lemma~\ref{L:specA2}, we find 
	\[
		\spec_*\bigl(\rho_\hbar(D_1)^{*_{h_g}}\rho_\hbar(D_1)\bigr)
		=\spec_*\bigl(\tfrac1{a_g}A^2\bigr)
		=\spec\bigl(\tfrac1{a_g}\Delta^2\bigr).
	\]
	In view of \eqref{E:spec0.Schroedinger} this yields
	\[
		\zeta_{|\rho_\hbar(D_1)|_{h_g}}(s)
		=\left(\frac{|\hbar|}{\sqrt{a_g}}\right)^{-s}\cdot\bigl(1-2^{-s}\bigr)\cdot\zeta_\Riem(s)
	\]
	and then
	\begin{equation}\label{E:det.D1.Heisenberg.Schroedinger}
		\zeta_{|\rho_\hbar(D_1)|_{h_g}}(0)=0,
		\qquad
		\zeta_{|\rho_\hbar(D_1)|_{h_g}}'(0)=-\tfrac12\log2.
	\end{equation}
	By Poincar\'e duality, $\zeta_{|\rho_\hbar(D_2)|_{h_g}}(s)=\zeta_{|\rho_\hbar(D_0)|_{h_g}}(s)$.

	The regularized determinants and the analytic torsion for the Heisenberg group are defined as in \eqref{E:det.def} and \eqref{E:tau.D.def}.
	More explicitly, 
	\[
		\dets\bigl(|\rho_\hbar(D_q)|_{h_g}\bigr)
		=\exp\bigl(-\zeta'_{|\rho_\hbar(D_q)|_{h_g}}(0)\bigr),
	\]
	and
	\[
		\tau_{h_g}(\rho_\hbar(D))=\frac{\dets|\rho_\hbar(D_0)|_{h_g}\cdot\dets|\rho_\hbar(D_2)|_{h_g}}{\dets|\rho_\hbar(D_1)|_{h_g}}.
	\]
	
	Summarizing \eqref{E:det.D0.Heisenberg.Schroedinger} and \eqref{E:det.D1.Heisenberg.Schroedinger}, we obtain the following analogue of Theorem~\ref{T:dets.Schroedinger}.

	\begin{proposition}\label{P:det.Heisenberg.Schroedinger}
		Let $D_q\in\mathcal U(\hoe)\otimes L(H^q(\hoe),H^{q+1}(\hoe))$ denote the Rumin differentials associated with the 3-dimensional Heisenberg algebra $\hoe=\hoe_{-2}\oplus\hoe_{-1}$.
		Moreover, let $h_g$ denote the Hermitian inner product on $H^*(\hoe)$ induced from a graded Euclidean inner product $g$ on $\hoe$.
		Then, in the Schr\"odinger representation $\rho_\hbar$ on $L^2(\R,d\theta)$, $\hbar\neq0$, we have
		\begin{align*}
			\dets\bigl(|\rho_\hbar(D_2)|_{h_g}\bigr)
			=\dets\bigl(|\rho_\hbar(D_0)|_{h_g}\bigr)
			&=2^{1/4},
			\\
			\dets\bigl(|\rho_\hbar(D_1)_{h_g}\bigr)
			&=2^{1/2}.
		\end{align*}
		In particular, the torsion is trivial,
		\[
			\tau_{h_g}\bigl(\rho_\hbar(D)\bigr)=1.
		\]
	\end{proposition}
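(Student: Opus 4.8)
The plan is to assemble the proposition directly from the spectral computations performed in the paragraphs immediately preceding it, so that almost all of the work is already done. First I would recall the definition $\dets|\rho_\hbar(D_q)|_{h_g} = \exp\bigl(-\zeta'_{|\rho_\hbar(D_q)|_{h_g}}(0)\bigr)$ and simply exponentiate: from $\zeta'_{|\rho_\hbar(D_0)|_{h_g}}(0) = -\tfrac14\log 2$ in \eqref{E:det.D0.Heisenberg.Schroedinger} one reads off $\dets|\rho_\hbar(D_0)|_{h_g} = 2^{1/4}$, which is the classical harmonic-oscillator determinant, and the Poincar\'e duality identity $\zeta_{|\rho_\hbar(D_2)|_{h_g}}(s) = \zeta_{|\rho_\hbar(D_0)|_{h_g}}(s)$ noted just above then gives $\dets|\rho_\hbar(D_2)|_{h_g} = 2^{1/4}$ as well. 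Likewise $\zeta'_{|\rho_\hbar(D_1)|_{h_g}}(0) = -\tfrac12\log 2$ in \eqref{E:det.D1.Heisenberg.Schroedinger} yields $\dets|\rho_\hbar(D_1)|_{h_g} = 2^{1/2}$.

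For the torsion I would substitute these three values into the graded determinant of the three-term Heisenberg Rumin complex, $\tau_{h_g}(\rho_\hbar(D)) = \dets|\rho_\hbar(D_0)|_{h_g}\cdot\dets|\rho_\hbar(D_2)|_{h_g}/\dets|\rho_\hbar(D_1)|_{h_g}$, and compute
\[
\tau_{h_g}(\rho_\hbar(D)) = \frac{2^{1/4}\cdot 2^{1/4}}{2^{1/2}} = 1.
\]
Here one uses the positivity of the regularized determinants to take positive square roots without ambiguity; note that the final answer is independent both of $\hbar$ and of the structure constant $a_g$, the latter dependence having cancelled between numerator and denominator.

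The step that genuinely carries content is not this bookkeeping but the derivation of \eqref{E:det.D1.Heisenberg.Schroedinger}, and there the crucial input is Lemma~\ref{L:specA2}, the spectral coincidence $\spec_*(A^2) = \spec(\Delta^2)$ with multiplicities, which was proved earlier for the $(2,3,5)$ nilpotent group and is reused verbatim. Combined with the factorization $D_1 = J^*A$ and the diagonal shape of the $h_{g,q}$ in \eqref{E:hq.Heisenberg}, it identifies the nonzero spectrum of $\rho_\hbar(D_1)^{*_{h_g}}\rho_\hbar(D_1) = \tfrac1{a_g}A^2$ with $\tfrac1{a_g}$ times the squared harmonic-oscillator spectrum, so that $\zeta_{|\rho_\hbar(D_1)|_{h_g}}(s) = (|\hbar|/\sqrt{a_g})^{-s}(1-2^{-s})\zeta_\Riem(s)$; evaluation at $s=0$ then uses only the classical facts $\zeta_\Riem(0) = -\tfrac12$ and $\zeta_\Riem'(0) = -\log\sqrt{2\pi}$. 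Since that obstacle has already been overcome in Section~\ref{SS:contact.Schroedinger}, the proof of the proposition itself is immediate and amounts to collecting the two preceding displays.
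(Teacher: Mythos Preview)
Your proposal is correct and follows essentially the same approach as the paper: the proposition is presented there as a summary of the computations \eqref{E:det.D0.Heisenberg.Schroedinger}, \eqref{E:det.D1.Heisenberg.Schroedinger}, and the Poincar\'e duality line preceding it, with Lemma~\ref{L:specA2} doing the real work for $D_1$. One small remark: the individual determinants $2^{1/4}$ and $2^{1/2}$ are already independent of $a_g$ and $\hbar$ (since $\zeta_{|\rho_\hbar(D_q)|_{h_g}}(0)=0$ kills the prefactor contribution upon differentiating), so there is no cancellation of $a_g$ between numerator and denominator in the torsion; also, only $\zeta_\Riem(0)=-\tfrac12$ is needed for $\zeta'_{|\rho_\hbar(D_1)|_{h_g}}(0)$, not $\zeta_\Riem'(0)$.
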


\section*{Acknowledgments}
	This research was funded in whole or in part by the Austrian Science Fund (FWF) Grant DOI 10.55776/P31663. 



\end{document}